\definecolor{MyDarkblue}{rgb}{0,0.08,0.50}
\definecolor{Brickred}{rgb}{0.65,0.08,0}
\newtheorem{theorem}{Theorem}[section]
\newtheorem{lemma}[theorem]{Lemma}
\newtheorem{remark}[theorem]{Remark}
\newtheorem{observation}[theorem]{Observation}
\newcommand{\E}[1]{\mathbb{E}\left[#1\right]}
\newcommand{\s}[2]{\sum_{#1}^{#2}}
\numberwithin{equation}{section}
\newcommand{\ErdosRenyi}{Erd\H{o}s-R\'enyi}
\renewcommand{\emptyset}{\varnothing}
\newcommand*{\be}{\begin{equation}}
\newcommand*{\ee}{\end{equation}}
\newcommand*{\ba}{\begin{aligned}}
\newcommand*{\ea}{\end{aligned}}
\newcommand*{\barr}{\begin{array}{c}}
\newcommand*{\earr}{\end{array}}
\def\namedlabel#1#2{\begingroup
    #2%
    \def\@currentlabel{#2}%
    \phantomsection\label{#1}\endgroup
}
\renewcommand{\P}[1]{\mathbb{P}\left(#1\right)}
\newcommand{\lr}[1]{\left(#1\right)}
\newcommand{\bigO}{\mathcal{O}}
\begin{document}

\title{A spectral method for community detection in moderately-sparse degree-corrected stochastic block models}
\author{Lennart Gulikers, Marc Lelarge, Laurent Massouli\'e}
\maketitle

\abstract{ \noindent \textit{We consider community detection in Degree-Corrected Stochastic Block Models (DC-SBM). We propose a spectral clustering algorithm based on a suitably normalized adjacency matrix. We show that this algorithm consistently recovers the block-membership of all but a vanishing fraction of nodes, in the regime where the lowest degree is of order log$(n)$ or higher. Recovery succeeds even for very heterogeneous degree-distributions. The used algorithm does not rely on parameters as input. In particular, it does not need to know the number of communities.
\\}

\section{Introduction}
Social and information networks are omnipresent in our daily lives and have been the interest of much recent research activity \cite{New10}. Studies have been focusing on local properties of network systems as well as their large-scale properties. Among those large-scale phenomena, community structure received a lot of attention. A wide variety of networks are found to have communities or blocks: groups of vertices with many links between themselves and substantially fewer to the rest of the network, or vice-versa. One of the fundamental problems in network inference considers the detection of such communities. See \cite{New04} and \cite{GiNe02} and references therein for an overview.

In the present manuscript we consider an instance of a certain probabilistic model that might be fit on the observed data. One of the best known such models is the stochastic block model (SBM) \cite{HoLaLe83}. In its simplest form, each of $n$ vertices belongs to precisely one of $K$ communities. Edges are independently drawn between different nodes with probabilities only depending on the block memberships of the involved vertices. This model is able to generate a diverse collection of random graphs, while it stays analytically tractable. 

In practice however, the SBM fails to accurately describe observed data: due to the stochastic inidentifiability of nodes in the same community, it does not allow for degree heterogeneity within blocks.
The DC-SBM was proposed in \cite{KaBrNe11} to overcome this issue. The DC-SBM allows, additional to a block-structure, the fitting of arbitrary degree sequences such that the expected degree of a vertex is independent of its community. The underlying paper deals with community detection in this model. 

Several methods for community detection can be found in the literature. They include, but are not limited to, modularity maximization \cite{NeGi04}, belief propagation \cite{DeAuKrFlMoCrZd11} and spectral clustering. For the latter, see for instance \cite{Lux07} and the section of related work in the underlying manuscript.  

Spectral algorithms employ eigenvectors of matrices representing network data to return non-local information of the network. The most commonly used matrices are the adjacency matrix and the (un)normalized graph laplacian \cite{Lux07}. In \cite{ZhNaNe14} the authors study the spectra of the adjacency matrix for networks possessing arbitrary degree distributions while simultaneously exhibiting a community-structure. 
They demonstrate that those spectra consist in general of two components: a  part containing the bulk of eigenvalues and a separated part with outliers whose number is in general equal to the number of blocks present. 

The contribution of our paper is as follows: We demonstrate with a clean analysis that community detection in a moderately sparse DC-SBM is feasible under rather general conditions on the degree-sequence. 

 More specifically, we consider the matrix $\widehat{H}$ with entry $(u,v)$ given by $\widehat{H}_{uv} = \frac{1}{\widehat{D}_u \widehat{D}_v} A_{uv}$ if $A_{uv} = 1$ and $\widehat{H}_{uv} = 0$ otherwise (here $A$ is the adjacency matrix of the graph and $\widehat{D}_u$ is the observed degree of vertex $u$), which we shall call  the \emph{normalized adjacency matrix}. 

 We show that this matrix concentrates around a deterministic matrix $P$ of rank $L \leq K$, when the minimum expected degree is as small as log($n$). To establish this concentration-result, we use Lemma \ref{lm::alignment_eigenvectors} below, which could be of independent interest, as a simple alternative to the commonly used Davis-Kahan theorem. 

 Due to the underlying community structure, the matrix that has the first $L$ eigenvectors of $P$ as its columns has the nice property that it has only $K$ different rows. Hence, due to this fact and the concentration of $\widehat{H}$ around $P$,  the rows of the corresponding eigenvector matrix of $\widehat{H}$ considered as points in an $L$-dimensional euclidean space, must cluster around $K$ centres. This property indicates that $\widehat{H}$ is the right matrix to analyse when dealing with the DC-SBM. Indeed, associating each vertex  with its corresponding row, we show in this paper that we retrieve the correct community of all but a vanishing fraction of nodes. 

 Further, we point out a natural connection between $\widehat{H}$ and a random walk on the observed graph. 


The organization of this article is as follows: First we formally introduce the DC-SBM together with necessary notations. Next we state our main result for community detection in this model, followed by a discussion in Section 4: a discussion of relevant literature, \textcolor{black}{performance on real data}, the conditions in the main theorem and a connection between $\widehat{H}$ and random walks.  Section 5 outlines the approach we take to prove the main theorem, which is accompanied by a statement of all auxiliary lemmas. Section 6 contains algebraic preliminaries. All proofs are deferred to  section 7. In the last section we give a suggestion for future research.

\section{Model and notations}
The Degree-Corrected Stochastic Block Model denoted by $\mathcal{G}(B,K,\{ \sigma_u \}_{u=1}^n,\{D_u \}_{u=1}^n)$ is a generalization of the Erd\"os-R\'enyi classical model of random graphs. We introduce a random graph on $V := \{1, \ldots, n\}$. We partition the set of vertices into $K$ communities of $\alpha_k n$ members each: each vertex $u$ is given a label $\sigma_u \in S:= \{1, \ldots, K\}$.   A weight $D_u$ is given to each vertex $u$ to encode its expected degree. Without loss of generality we assume that $D_1 \leq D_2 \leq \cdots \leq D_n$. All weights and labels will depend on $n$, but this is suppressed in the notation here. For each pair $(u,v)$, we include the edge $(u,v)$ with probability
\be \P{u \sim v} = \left\{ 
  \begin{array}{l l}
    \frac{D_u D_v}{n \overline{D}} B_{\sigma_u \sigma_v} & \quad \text{ if } u \neq v\\
    0 & \quad  \text{ if } u =v, \\
  \end{array}  \right. \label{eq::Prob} \ee
where $B \in (\mathbb{R}^{+})^{K \times K}$ is a symmetric matrix, independent of $n$ and $\overline{D} = 1/n \s{l = 1}{n} D_l$, the average weight.  $B$ may be chosen \emph{completely} independent of the weights $\{D_u\}_{u=1}^n$: all information about the community-structure is then captured by $B$ alone. 

We make some further assumptions on the parameters of the model:
For \eqref{eq::Prob} to define a probability, we assume 
\be \frac{D_u D_v}{n \overline{D}} B_{\sigma_u \sigma_v} \leq 1, \label{eq::require_prob} \ee
for all $u,v$. 

 The vector $\boldsymbol \alpha = (\alpha_1, \ldots, \alpha_K)$ is assumed to be constant. Hence, the clusters are well balanced, as the size of each community grows linearly with $n$. Further, the average weight in a cluster, 
\[ \overline{D}_i = \frac{1}{\alpha_i n} \s{u=1}{n} D_u \mathds{1}_{\sigma_u = i}, \]
is assumed to be asymptotically a fraction of the average weight $\overline{D}$. That is, we assume that there exists non-zero constants $d_1, \ldots, d_K$, such that,
\be \lim_{n \to \infty} \frac{\overline{D}_i }{\overline{D}} = d_i, \label{eq::d_i_convergence} \ee
for all $i$. Under this assumption, the following limit exists for all $i$,
\be \overline{M}_i = \lim_{n \to \infty} \frac{M_i}{\s{l=1}{n} D_l} = \s{k=1}{K} B_{ik} \alpha_k d_k \label{eq::M_i_convergence},\ee
\textcolor{black}{where $M_i = \s{l}{} D_l B_{i \sigma_l}$}.

 We shall see in Section 4.4 that we need the following condition for the communities to be identifiable: we assume that for all $i,l$ there exists $j$ such that
\be  \frac{B_{ij}}{\overline{M}_i} \neq \frac{B_{lj}}{\overline{M}_l}.  \label{eq::dist} \ee

 In the analysis that follows, we will consider the random graph in a moderately-sparse regime, that is we assume:
 either \be \lim_{n \to \infty} \frac{D_1}{\text{log}(n)} = \infty, \label{eq::growth_sup_log} \ee
  or, for some constant $c < 1/2$, \be D_1 \geq C_{B,\textbf{M}} \cdot \text{log}(n)  \mbox{  and   }  \lim_{n \to \infty}  \frac{D_n^2 }{ n^c} \to 0, \label{eq::growth_log} \ee
where  $C_{B,\textbf{M}}$ is some constant depending on $B$, $\textbf{M} = ( \overline{M}_1, \ldots \overline{M}_K  )$ and the convergence rate in \eqref{eq::M_i_convergence}.
 Further, we assume the following condition on the weights:
\be \frac{D_1^2}{\overline{D}} =  \Omega( \text{log}(n)). \label{eq::degree_hetero} \ee 
Note that under those assumptions, $D_u$ represents the expected degree of vertex $u$ upto a multiplicative factor that depends only on the community $\sigma_u$. Indeed, if $\widehat{D}_u$ denotes the observed degree of vertex $u$, then
\be \ba \E{\widehat{D}_u} &= \frac{D_u}{n \overline{D}} \s{l \neq u}{} D_l B_{\sigma_u \sigma_l} \\
&= \frac{D_u}{n \overline{D}} \lr{ M_{\sigma_u} - D_u B_{\sigma_u \sigma_u} } \\
&= D_u \overline{M}_{\sigma_u} (1 + \epsilon_n)  , \label{eq::expD} \ea \ee
where  $ \epsilon_n \leq \frac{2}{\overline{M}_{\sigma_u}  D_v} = o_n(1)$.

As an example, we let $\{\sigma_u \}_{u=1}^n$ be any sequence such that $n/2$ of its elements are $1$ and the other $n/2$ elements are $2$. Then, there are two equally-sized communities: $K=2$ and $\alpha_1 = \alpha_2 = 1/2$. Let $\{ D_u\}_{u=1}^n$ be any non-decreasing sequence  with $D_1 > 0$. Put
\[ B =  \left( \begin{array}{ccc} a & b  \\
b & a  \end{array} \right), \]
for some constants $a$ and $b$. Then
\[  \P{u \sim v} =  \frac{D_u D_v}{n \overline{D}}   \left\{ 
  \begin{array}{l l}
    a & \quad \text{ if }  \sigma_u = \sigma_v,\\
    b  & \quad  \text{ otherwise }. \\
  \end{array} \right.\]
This is exactly the extended-planted partition model (EPPM) considered in \cite{ChChTs12}.
  
\section{Main Results}
Our aim is to retrieve the underlying community structure from a single observation of the random graph. We do this by analysing the spectral properties of $\widehat{H} \in \mathbb{R}^{n \times n}$ defined for $u,v \in V$ by
 \begin{equation}
 \widehat{H}_{uv} = \left\{ 
  \begin{array}{l l}
    \frac{1}{\widehat{D}_u \widehat{D}_v} A_{uv} & \quad \text{ if } A_{uv} = 1,\\
    0 & \quad  \text{ otherwise }, \\
  \end{array} \right.
  \label{eq::widehat_H}
  \end{equation}
  where $A$ is the adjacency matrix of the observed graph.
We shall demonstrate that this matrix is close (in a sense to be specified below) to the matrix $P$ defined for $(u,v)$ as
\be P_{uv} = \frac{1}{n \overline{D}} \frac{B_{\sigma_u \sigma_v}}{\overline{M}_{\sigma_u} \overline{M}_{\sigma_v}}. \label{eq::Matrix_overlineP} \ee
Denote the rank of $P$ by $L$. Due to the community structure, $L \leq K$ (see below for details). 

In the regime where \eqref{eq::growth_sup_log} holds, let $f$ be any function tending to zero, such that \[f(n) \gg \frac{1}{\widehat{D}_1} + \frac{1}{\sqrt{\text{log}(n)}} + \sqrt{\frac{\text{log}(n)}{\widehat{D}_1}}.\] For the regime where \eqref{eq::growth_log} holds, let $f$ be tending to zero in such a way that \[f(n) \gg \frac{1}{\widehat{D}_1} + \frac{1}{\sqrt{\text{log}(n)}} + \frac{1}{\text{log}^{1/3}(n)}.\] Further, let $\tau(n) = 1 / f(n)^{1/3}$.  

 Algorithm $1$ uses $\widehat{H}$ to reconstruct the communities. \newpage
\begin{algorithm}[h]                      
\caption{}          
\label{alg1}                           
\begin{algorithmic}                    
   
    \State 
\begin{enumerate}
\item  Calculate the average degree in the graph, call it $\widehat{D}_{\text{average}}$. Let $\widehat{L}$ be the number of eigenvalues of $\widehat{H}$ that are in absolute value larger than $f(n) / \widehat{D}_{\text{average}}$.
\item Compute the first  $\widehat{L}$ orthonormal eigenvectors of $\widehat{H}$ ordered according to their absolute eigenvalues. Denote these eigenvectors and their corresponding eigenvalues by $\widehat{x}_1, \ldots, \widehat{x}_{\widehat{L}}$ and $\widehat{\lambda}_1, \ldots, \widehat{\lambda}_{\widehat{L}}$ respectively.
\item Associate to each node $u \in V$ the vector
\be \widehat{z}_u = ( \widehat{x}_1(u) , \ldots , \widehat{x}_{\widehat{L}}(u) ). \label{def::lines} \ee Cluster the vectors $(\widehat{z}_u)_{u=1}^n$ as follows: Pick $\tau(n)$ pairs of vertices, label them $(u(1),u'(1)), \ldots, (u(\tau(n)),u'(\tau(n)))$. Calculate $\delta(t) = \sqrt{n}||\widehat{z}_{u(t)} - \widehat{z}_{u'(t)} ||$, and $\epsilon = \min_{t: \delta(t) > f^{2/3}(n)} \delta(t)$.  Find a vertex $m$ so that $\{u' : \sqrt{n} ||\widehat{z}_{m} - \widehat{z}_{u'} || \leq \epsilon / 8 \}$ has cardinality larger than $f^{1/3}(n) \  n $. Form a community consisting of all nodes in $\{u' : \sqrt{n} ||\widehat{z}_{m} - \widehat{z}_{u'} || \leq \epsilon / 4 \}$. Remove those nodes and iterate this procedure.
\end{enumerate}    
\end{algorithmic}
\end{algorithm}

 We have:
\begin{theorem}
Consider a DC-SBM $\mathcal{G}(B, \boldsymbol \alpha ,\{ \sigma_u \}_{u=1}^n,\{ D_u \}_{u=1}^n)$. Assume assumptions \eqref{eq::require_prob}, \eqref{eq::d_i_convergence}, \eqref{eq::dist},  \eqref{eq::degree_hetero} and either \eqref{eq::growth_sup_log} or \eqref{eq::growth_log} to hold. Then, \emph{Algorithm 1} retrieves the community of all but a vanishing fraction of nodes. 
\label{thm::main}
\end{theorem}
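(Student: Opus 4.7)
\medskip
\noindent\textbf{Proof plan.} The strategy is to treat $\widehat{H}$ as a noisy version of the deterministic matrix $P$ and to transfer the low-rank block structure of $P$ to the spectral embedding produced by $\widehat{H}$. First I would analyse $P$ itself. Since
\[
P_{uv} \;=\; \frac{1}{n\bar{D}}\,\frac{B_{\sigma_u\sigma_v}}{\bar{M}_{\sigma_u}\bar{M}_{\sigma_v}}
\]
depends on $(u,v)$ only through the labels, $P$ factors as $P = Z Q Z^{\top}$, with $Z\in \R^{n\times K}$ a (rescaled) label-indicator matrix and $Q\in\R^{K\times K}$ built from $B$ and the $\bar M_i$. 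Hence $\mathrm{rank}(P)=:L\leq K$, and the $L$ nonzero eigenvalues of $P$ coincide with those of $Z^{\top}Z\,Q=\mathrm{diag}(\alpha_k n)\,Q$, which are all of order $1/\bar D$. Moreover the matrix $X\in\R^{n\times L}$ of orthonormal top eigenvectors of $P$ has only $K$ distinct rows $\mu_1,\dots,\mu_K$ indexed by community; a short linear-algebra argument using assumption~\eqref{eq::dist} (which prevents two communities from yielding colinear rows of $Q$) shows that these $K$ rows are pairwise distinct and separated by $\Theta(1/\sqrt n)$.

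\medskip
Next I would show that $\|\widehat H - P\|_{\mathrm{op}} = o(1/\bar D)$ with high probability, i.e.\ strictly below the spectral gap of $P$. The natural decomposition is
\[
\widehat H - P \;=\; (\widehat H - H) \;+\; (H - \E{H}) \;+\; (\E{H} - P),
\]
where $H_{uv}:=A_{uv}/(\E{\widehat D_u}\,\E{\widehat D_v})$ replaces the random denominator by its mean. The bias $\E{H}-P$ is $O(1/(n\bar D))$ by~\eqref{eq::expD}. The stochastic term $H - \E{H}$ is a centred normalization of a sparse Bernoulli matrix whose operator norm one controls by a trace/moment-method down to $D_1\gtrsim\log n$; this is where either~\eqref{eq::growth_sup_log} or the constant $C_{B,\mathbf M}$ in~\eqref{eq::growth_log} enters. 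Finally, $\widehat H - H$ is controlled through concentration of the observed degrees $\widehat D_u$ around $D_u\bar M_{\sigma_u}(1+o(1))$, which follows from a Bernstein bound together with~\eqref{eq::degree_hetero}.

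\medskip
With concentration in hand, I would apply the alignment lemma~\ref{lm::alignment_eigenvectors} from the introduction to produce an orthogonal matrix $O$ satisfying
\[
\|\widehat X O - X\|_F^{2} \;=\; O\!\left(\frac{\|\widehat H - P\|_{\mathrm{op}}^{2}}{\Delta^{2}}\right) \;=\; o(1),
\]
where $\widehat X$ collects the top eigenvectors of $\widehat H$ and $\Delta\asymp 1/\bar D$ is the spectral gap of $P$. By Markov's inequality this implies that only $o(n)$ rows of $\widehat X O$ can be more than $\epsilon/\sqrt n$ away from their target $\mu_{\sigma_u}$; these \emph{bad} vertices are precisely the ones Algorithm~1 may misclassify.

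\medskip
Finally I would check that the three steps of Algorithm~1 exploit this row structure correctly. The threshold $f(n)/\widehat D_{\mathrm{average}}$ in Step~1 lies with high probability strictly between the noise floor and the $L$ signal eigenvalues of $\widehat H$, so $\widehat L=L$. In Step~3, among the $\tau(n)$ random pairs at least one is intra-community and at least one inter-community with probability $1-o(1)$, so $\epsilon$ concentrates around $\sqrt n$ times the minimum inter-cluster distance among the $\mu_k$; a ball of radius $\epsilon/8$ around a \emph{good} vertex therefore contains an $\alpha_{\sigma_m}n-o(n)$-size community, certifying that the selected $m$ is good, after which the larger ball of radius $\epsilon/4$ still captures exactly one community up to the $o(n)$ bad vertices, and iterating peels off the $K$ communities. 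The main obstacle is the operator-norm concentration of $H-\E{H}$ in the regime~\eqref{eq::growth_log}: the naive spectral norm of a sparse Bernoulli matrix is driven by high-degree vertices, and it is the double normalization by $\widehat D_u\widehat D_v$ in the definition of $\widehat H$ that rescues the argument; making this rigorous with the correct dependence on $B$ and $\mathbf M$ is the delicate technical step.
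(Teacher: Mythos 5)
Your overall architecture is the same as the paper's: the same population matrix $P$, the same three-term decomposition $\widehat{H}-P=(\widehat{H}-H)+(H-\E{H})+(\E{H}-P)$ with the same intermediate matrix $H_{uv}=A_{uv}/(\E{\widehat{D}_u}\E{\widehat{D}_v})$, the same use of the eigenvector-alignment lemma in place of Davis--Kahan, and essentially the same analysis of the three steps of Algorithm~1. However, there is one genuine gap, and you have also misdiagnosed where the difficulty lies. You propose to control $\widehat{H}-H$ by ``concentration of the observed degrees $\widehat{D}_u$ around $D_u\bar{M}_{\sigma_u}(1+o(1))$, which follows from a Bernstein bound together with \eqref{eq::degree_hetero}.'' Bernstein gives $|\widehat{D}_u-\E{\widehat{D}_u}|/\E{\widehat{D}_u}=O\bigl(\sqrt{\log n/D_1}\bigr)$ uniformly over $u$, so the resulting multiplicative bound $\widehat{H}_{uv}-H_{uv}=\epsilon_{uv}H_{uv}$ with $\epsilon_{uv}\to 0$ works only in the regime \eqref{eq::growth_sup_log}, where $D_1\gg\log n$. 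In the borderline regime \eqref{eq::growth_log}, where $D_1=C_{B,\mathbf{M}}\log n$, this relative error is $\Theta(1)$ and does not vanish; worse, one cannot even guarantee that \emph{every} vertex has $|\widehat{D}_u-\E{\widehat{D}_u}|\leq\epsilon(n)\E{\widehat{D}_u}$ for any $\epsilon(n)\to 0$. The paper's Lemma~\ref{thm::log} closes exactly this gap by a good/bad vertex decomposition: it shows that each vertex has at most $O(\epsilon(n)\,\E{\widehat{D}_u})$ \emph{bad} neighbours (vertices whose degree deviates by more than $\epsilon(n)=\log^{-1/3}(n)$ relatively), using that the neighbourhood of $u$ contains few internal edges so that the degrees of $u$'s neighbours are nearly independent, and this is where the second half of \eqref{eq::growth_log} ($D_n^2=o(n^c)$, $c<1/2$) is actually consumed. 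Your plan never uses that condition, which is a sign the argument as written cannot cover that regime.

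Relatedly, you locate the ``delicate technical step'' in the concentration of $H-\E{H}$ under \eqref{eq::growth_log} and credit ``the double normalization by $\widehat{D}_u\widehat{D}_v$'' with rescuing it; but $H$ is normalized by the \emph{deterministic} quantities $\E{\widehat{D}_u}\E{\widehat{D}_v}$, and the paper's bound $\rho(H-\E{H})=O(1/\sqrt{\log n})\cdot(1/\overline{D})$ (Lemma~\ref{th::H_min_H_Bar}) holds identically in both regimes, needing only \eqref{eq::degree_hetero}; it is obtained not by a trace/moment method but by coupling the rescaled entries to a genuine centred Bernoulli matrix and invoking a Feige--Ofek-type bound (a naive trace method is known to be lossy at density $\Theta(\log n)$, so this substitution is not innocuous either). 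Two smaller inaccuracies: the bias term satisfies $\rho(\E{H}-P)=O(1/(D_1\overline{D}))$, not $O(1/(n\overline{D}))$ --- the dominant contribution is the relative error $\epsilon_n=O(1/D_1)$ in \eqref{eq::expD} applied entrywise to $P$, not the deleted diagonal --- though both are still $o(1/\overline{D})$ so the conclusion survives; and in Step~3 the algorithm certifies $m$ by requiring only $f^{1/3}(n)\,n$ (not $\alpha_{\min}n/2$) vertices in the $\epsilon/8$-ball, which suffices because $f^{1/3}(n)\,n\gg f^{2/3}(n)\,n$ bounds the number of atypical vertices.
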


 The first step estimates $L$. Indeed, by definition there are only $L$ non-zero eigenvalues of $P$. Those are all of order $1 / \overline{D}$ and the corresponding first eigenvalues of $\widehat{H}$ are of the same order. The remaining eigenvalues  of $\widehat{H}$ are negligible with respect to $f(n) / \overline{D}$.

Under the assumptions in Theorem \ref{thm::main}, all but a negligible number of rows of the matrix having the first $L$ eigenvectors of $\widehat{H}$ as it columns, cluster for large $n$ to within negligible distance of block-specific representatives that are separated by some non-vanishing gap (call the corresponding vertices typical). This is exploited in the third step. There, with high probability, all picked vertices are typical. Thus, for a pair $t$, $\delta(t)$ vanishes in front of $f^{2/3}(n)$ if the vertices in the pair belong to the same community. Hence, by calculating the distance between the other vertices, we obtain $\epsilon$ as an estimator for the gap mentioned above. At most $f(n)^{2/3} \ n$ vertices are not typical. Hence, the chosen ball around $m$ with radius $\epsilon / 8$ contains a negligible number of non-typical vertices, the remaining vertices should necessarily be in the same community. By enlarging the radius of the ball around $m$, we include all vertices of a single community. See the proof of Theorem \ref{thm::main} below for more details. 

\begin{remark}
Note that the only input to the algorithm is the regime (i.e., either $D_1(n) = \Theta(\emph{log}(n))$ or $D_1(n) \gg \emph{log}(n)$). This information is used to pick the right form of the function $f$. Alternatively, we could adapt the algorithm so that it requires $L = \text{Rank}(B)$ and $\alpha_{min}$ instead of the regime: Step $1$ can then be skipped, in Step $2$ we replace $\widehat{L}$ by $L$ and in Step $3$ we chose a vertex $m$ that contains in its $\epsilon/8$ neighbourhood at least $\alpha_{min} n /2$ vertices.
\end{remark}
\section{Discussion}
Before we prove the main theorem, we make some observations and remarks.

\subsection{Adjacency matrix}
In \cite{LeRi13} and  \cite{McS01}, the authors use the adjacency matrix $A$ of a graph to recover the underlying community-structure. They consider the matrix having the first $K$ eigenvectors of $A$ as its columns and show that, under appropriate conditions, its rows cluster now in $K$ different \emph{directions}. 
However, results in \cite{ChLuVu03} and  \cite{MiPa02} suggest that the algorithms in  \cite{LeRi13} and  \cite{McS01} fail when the expected degree sequence is too irregular. Intuitively, if the prescribed degree sequence follows a power-law, then so does the spectrum of the adjacency matrix. Further, as we shall demonstrate below, the first $K$ eigenvectors correspond only to the $K$ top-degree nodes, and should therefore not be expected to capture more global features of a graph, such as its underlying block-structure. The following theorem makes this observation more rigorous:
\begin{theorem}
Consider a DC-SBM $\mathcal{G}(B,K,\{ \sigma_u \}_{u=1}^n,\{ D_u \}_{u=1}^n)$ such that
\be D_u = \left\{ 
  \begin{array}{l l}
    D_1 & \quad \text{ if } 1 \leq u < n - k \\
    D_1 n^{\gamma} (u + 1 - (n-k)) & \quad  \text{ if } u \geq n - k, \\
  \end{array} \right. \ee
  where $k = n^{\beta}$ and the constants $\beta$ and $\gamma$ obey:
 \be D^2_1(n) n^{2 \gamma + 4 \beta - 1} \to 0 \label{eq::C_beta_gamma_1} \ee
 and
 \be \gamma  > 4 \beta. \label{eq::C_beta_gamma_2} \ee
Further, assume that
\be \sigma_u = \left\{ 
  \begin{array}{l l}
    2 & \quad \text{ if } u \leq \frac{n}{2}\\
    1 & \quad  \text{ if } u > \frac{n}{2}. \\
  \end{array} \right. \ee
Under these conditions, the first $k$ eigenvectors become for large $n$ indistinguishable from the eigenvectors of a graph that is the disjoint-union of $k$ stars having degrees $D_n + o(1), \ldots, D_{n-k} + o(1).$
\label{thm::power_law}
\end{theorem}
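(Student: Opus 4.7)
The plan is to approximate the adjacency matrix $A$ by the adjacency matrix $A^\star$ of a disjoint union of $k$ stars, each centered at one of the top-$k$ vertices $u\in\{n-k+1,\ldots,n\}$ and having the observed neighborhood of $u$ as its leaves, and then to invoke Lemma \ref{lm::alignment_eigenvectors}. Concretely, I would write $A = A^\star + E$ with $E = E_{\text{cross}}+E_{\text{bulk}}$, where $E_{\text{cross}}$ collects the entries corresponding to edges between two top-$k$ vertices and those corresponding to leaves shared by two distinct stars, and $E_{\text{bulk}}$ is the induced adjacency matrix on the bottom $n-k$ low-degree vertices.

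First I would control $E_{\text{cross}}$ by a first-moment computation. Since $D_u\le D_n \lesssim D_1 n^{\gamma+\beta}$ for top vertices and $\overline{D}\sim D_1$ under the hypotheses, the expected number of edges between top-$k$ pairs is $O(k^2 D_n^2/(n\overline{D}))=O(D_1^2 n^{2\gamma+2\beta-1})$, and the expected number of pairs of top vertices sharing a common neighbor is $O(D_1^2 n^{2\gamma+4\beta-1})$; both are $o(1)$ by \eqref{eq::C_beta_gamma_1}, so $E_{\text{cross}}=0$ with high probability and $A^\star$ is genuinely a disjoint union of stars. For $E_{\text{bulk}}$, the induced model on the bottom vertices is essentially Erd\H{o}s--R\'enyi with edge probability of order $D_1/n$, so standard spectral-norm bounds for sparse symmetric random matrices (Feige--Ofek style, using \eqref{eq::degree_hetero} to guarantee enough concentration) give $\|E_{\text{bulk}}\| = O(\sqrt{D_1}\,\mathrm{polylog}(n))$ with high probability.

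Next I would diagonalise $A^\star$ explicitly: it is block-diagonal over the $k$ stars, and a star with center $u$ and $\widehat{D}_u$ leaves contributes exactly two non-zero eigenvalues $\pm\sqrt{\widehat{D}_u}$ with eigenvectors supported on the star (entry $1/\sqrt{2}$ at the center and $\pm 1/\sqrt{2\widehat{D}_u}$ on each leaf). Chernoff's inequality gives $\widehat{D}_u = D_u(1+o(1))$ uniformly over the top-$k$ vertices, so the $2k$ non-zero eigenvalues of $A^\star$ are approximately $\pm\sqrt{D_1 n^\gamma}\sqrt{t}$ for $t=1,\ldots,k$. The smallest of these in absolute value, $\sqrt{D_1 n^\gamma}$, dominates $\|E\|\lesssim\sqrt{D_1}\,\mathrm{polylog}(n)$ because $\gamma>0$; and the minimal consecutive gap is of order $\sqrt{D_1 n^\gamma}/\sqrt{k}=\sqrt{D_1 n^{\gamma-\beta}}$, which again dominates both $\|E\|$ and the random fluctuation $|\sqrt{\widehat{D}_u}-\sqrt{D_u}|=O(\sqrt{\log n})$ under \eqref{eq::C_beta_gamma_2}.

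Finally I would apply Lemma \ref{lm::alignment_eigenvectors} to each of the top $2k$ eigenpairs of $A^\star$: each such eigenvalue is separated from the rest of the spectrum of $A^\star$ by a gap that is $\gg\|E\|$, so the corresponding eigenvector of $A=A^\star+E$ lies within $o(1)$ of the one-dimensional eigenspace of $A^\star$, i.e., of the explicit star eigenvector above. Ordering by absolute value identifies the first $k$ eigenvectors of $A$ with those of the disjoint union of $k$ stars having degrees $\widehat{D}_u = D_u + o(1)$ for $u\in\{n-k+1,\ldots,n\}$. The main obstacle is making the inter-star gap $\sqrt{D_1 n^{\gamma-\beta}}$ dominate \emph{every} source of perturbation simultaneously and uniformly over $t\in\{1,\ldots,k\}$: the bulk norm $\|E_{\text{bulk}}\|$, the residual overlap terms in $E_{\text{cross}}$, and the random fluctuations of $\sqrt{\widehat{D}_u}$; this is precisely what the quantitative hypotheses \eqref{eq::C_beta_gamma_1} and \eqref{eq::C_beta_gamma_2} buy, leaving only a union bound over the $k=n^\beta$ stars to assemble the final statement.
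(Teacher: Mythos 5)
Your skeleton is the same as the paper's: split $A$ into a star forest centred at the top-$k$ vertices plus remainder terms (the paper's $G_1, G_1', G_2, G_3$), bound the spectral radii of the remainders, lower-bound the eigenvalue gap of the star forest, and invoke Lemma \ref{lm::alignment_eigenvectors}. Two of your refinements are fine and even slightly cleaner than the paper's: showing via a first-moment bound under \eqref{eq::C_beta_gamma_1} that top--top edges and shared leaves are absent w.h.p.\ (the paper instead assigns a shared leaf to the earlier star and bounds the leftover graph $G_1'$ by its maximum degree $\bigO(\log n)$), and your gap estimate $\sqrt{D_1 n^{\gamma-\beta}}$, which is the correct order and sharper than the paper's uniform bound $\sqrt{D_1}\,n^{(\gamma-3\beta)/2}$ (the paper pairs the worst-case numerator and worst-case denominator, which occur at different stars).

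There is, however, one genuine error: the claim $\lVert E_{\text{bulk}}\rVert = \bigO\lr{\sqrt{D_1}\,\mathrm{polylog}(n)}$. Feige--Ofek-type bounds control $\rho\lr{A - \E{A}}$, not $\rho(A)$. Your $E_{\text{bulk}}$ is the \emph{uncentred} adjacency matrix of the induced subgraph on the bottom $n-k$ vertices, which is an inhomogeneous \ErdosRenyi{} graph with average degree $\Theta(D_1)$; its Perron eigenvalue is therefore $\Theta(D_1)$ (test against the all-ones vector, or note that $\E{E_{\text{bulk}}}$ is a low-rank matrix of norm $\Theta(D_1)$). So the bulk contributes $\Theta(D_1)$, not $\bigO(\sqrt{D_1})$, to the perturbation --- this is exactly why the paper bounds $\rho(A|_{G_3})$ by the maximum degree, getting $\bigO(\omega(n))$. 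The fix is to use that bound and check $D_1 \ll \sqrt{D_1 n^{\gamma-\beta}}$, i.e.\ $D_1 \ll n^{\gamma-\beta}$; this holds for the paper's illustrative parameters ($D_1=n^{1/20}$, $\beta=1/20$, $\gamma=1/5$) but is not a formal consequence of \eqref{eq::C_beta_gamma_1}--\eqref{eq::C_beta_gamma_2} alone, so it needs to be stated as the operative constraint. The error matters beyond bookkeeping: with your $\sqrt{D_1}$ bound one would wrongly conclude the theorem in regimes (e.g.\ $D_1$ close to $n^{1/2-\gamma-2\beta}$ with $\gamma,\beta$ small) where the bulk eigenvalue actually dominates the smallest star eigenvalue $\sqrt{D_1 n^{\gamma}}$ and the conclusion is false.
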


 For instance, $D_1(n) = n^{1/20}$, $\beta = 1/20$ and $\gamma = 1/5$, meets the assumptions in Theorem \ref{thm::power_law}. Further, it verifies the conditions in the main theorem (Theorem \ref{thm::main}): Algorithm $1$ will successfully return the community membership of all but a vanishing fraction of nodes. 

We remark that the above theorem is inspired by the main result in \cite{MiPa02}. There, random graphs without community structure are considered and the power-law behaviour of the corresponding spectrum is obtained. To say something about the eigenvectors, we additionally introduce a gap between the top $k$ degreed-nodes and the remaining $n-k$ nodes. This allows us to use Lemma \ref{lm::alignment_eigenvectors}, see the proof of \ref{thm::power_law} below.  

\subsection{SCORE}
\textcolor{black}{Interestingly, the first eigenvectors of $A$ do contain information about the underlying community structure, but in a hidden way. Indeed, the SCORE method proposed in \cite{Jin15} shows that, under some conditions, using the coordinate-wise ratios of the leading eigenvectors leads to consistent clustering.}

\textcolor{black}{Note that we obtain the same random graph model as in \cite{Jin15} by putting $\theta(u):= D_u / \sqrt{n \overline{D} \alpha }$ and $P(i,j) = \alpha B_{\sigma_u \sigma_v},$ where $\alpha ^{-1}= \max_{i,j} B_{ij}.$ We further note that the conditions are more stringent: $(2.7)$ demands that $P$ (or $B$) is non-singular which is unnecessary here, see Remark \ref{rm::violating_ident} below. 
}

\subsection{Laplacian}
As we just pointed out, the adjacency matrix does not capture accurately global properties of a graph. The normalized Laplacian is a more suitable candidate. It is defined by $L = I - D^{-1/2} A D^{-1/2}$, where $I$ is the identity matrix, $A$ is the adjacency graph and $D$ the diagonal matrix with the row sums of $A$ on its diagonal (i.e., the degrees). Object of study in \cite{ChLuVu03} is the Laplacian spectra of random graphs with a given degree sequence $(d_1, \ldots, d_n)$ where edges are independently present between each pair of vertices $(u,v)$ with probability $\frac{d_u d_v}{\s{l=1}{n} d_l}$. In the regime $d_1^2 \gg \overline{D}$, with $\overline{D} = 1/n \s{l=1}{n} d_l$, the eigenvalues satisfy the semicircle law with respect to the circle of radius $2 / \sqrt{\overline{D}}$ centred at $1$. 

Denote the eigenvalues of the normalized laplacian by $0 = \lambda_1 \leq \lambda_2 \leq \ldots \leq \lambda_n \leq 2$. It is a well-known fact that all eigenvalues are located in the interval $[0,2]$ and that the algebraic multiplicity of $0$ equals the number of components in the graph. The authors of \cite{ChLuVu03} further study the spectral gap $\lambda = \min \{ \lambda_2, 2 - \lambda_n \}$, which reflects global properties of the random graph. 
According to  \cite{ChLuVu03}, when $d_1 \gg \text{log}^2(n)$, 
\[ \lambda \geq 1 - \frac{1+o(1)}{4/ \sqrt{\overline{w}}} - \frac{\text{log}^2(n)}{d_1}, \]
thus in this dense regime, all non-zero eigenvalues are close to $1$ and thus the spectrum of the Laplacian contains no outliers, in contrast with the adjacency matrix. This bound is improved in \cite{ChRa11}, to
\[ \lambda \geq 1 - 2 \sqrt{\frac{6 \text{log}(2n)}{d_1}}, \]
for $d_1 \gg \text{log}(n)$. 

The stochastic block model is a special case of the latent space model \cite{HoRaHa01}. In this model a vector $v_u$ is associated to each node $u$ and an edge between $u$ and $v$ is present with probability depending only on $z_u$ and $z_v$. If $A$ is the adjacency matrix of the graph, $D$ the diagonal matrix containing the degrees and $L = D^{-1/2} A D^{-1/2}$, then the population version of these matrices are defined as
$ \mathcal{A} = \E{A | z_1, \ldots, z_n}, $
$ \mathcal{D} = \text{diag} \lr{ \s{v=1}{n} \mathcal{A}_{1v}, \ldots, \s{v=1}{n} \mathcal{A}_{nv} },  $
and, $ \mathcal{L} = \mathcal{D}^{-1/2} \mathcal{A} \mathcal{D}^{-1/2}. $
In \cite{RoChYu11} convergence of the empirical eigenvectors of $L$ to the population eigenvectors of $\mathcal{L}$ is shown. This follows from their novel result establishing the convergence of $L^2$ to $\mathcal{L}^2$ in Frobenius norm. This forms the basis of an algorithm that uses the first $k$ eigenvectors (according to the eigenvalues order decreasingly with respect to their absolute value). To recover the hidden communities in the SBM (thus, without degree-corrections). The algorithm is shown to succeed if those first $k$ eigenvalues are sufficiently separated from the rest of the eigenvalues and if the minimum expected degree exceeds $\frac{\sqrt{2}n}{\sqrt{\text{log} n}},$ which is more restrictive than the lower bound of $\text{log} n$. 

In \cite{DaHoMc04} the matrix $\E{D}^{-1/2} A \E{D}^{-1/2}$ (reminiscent of the normalized Laplacian) is used to retrieve the underlying community structure in the DC-SBM. Note that this method requires the expected degrees to be known. It succeeds if the minimum degree is of order $\text{log}^6 n$. 

To deal with low-degree nodes, the authors in \cite{ChChTs12} use the degree-corrected random walk laplacian: $I - (D + \tau I)^{-1}A,$
where $\tau > 0$ is a constant, to find clusters in the extended planted partition mode (EPPM) where the expected minimum degree is $\Omega( \text{log} n )$. In the EPPM, $B$ is a matrix where an element equals $p$ if it is on the diagonal, and $q$ otherwise; it is thus a special case of the DC-SBM. The algorithm based on the random walk laplacian requires $\tau$ as input and the optimal value of $\tau$ depends in a complex way on the degree-distribution of the graph.  The main theorem in \cite{ChChTs12} comes with lengthy conditions that are not easy to compare with other results. This theorem restricted to the setting where all $d_u$'s equal $d$,  assumes $q$ to be a constant, which is more restrictive than our assumptions. It is unclear whether the results for the EPPM can be neatly generalized using the same operator to the DC-SBM, given the complexity of the present conditions.

Although the Laplacian captures global properties of a graph much better than the adjacency matrix, its spectrum is still influenced by the underlying degree-structure. Indeed, consider a DC-SBM with $3000$ vertices divided in $K=3$ equally-sized communities, with 
\[
B = \lr{\begin{matrix} 
1 & 2 & 3 \\
2 & 0 & 2 \\
3 & 2 & 5 \\
\end{matrix}},
\]
degree-sequence
\be D_u = \left\{ 
  \begin{array}{l l}
    u^{1/3} & \quad \text{ if } u=1, \ldots,1000 \\
    (u-1000)^{1/3} & \quad \text{ if } u=1001, \ldots,2000 \\
    (u-2000)^{1/3} & \quad \text{ if } u=2001, \ldots,3000, \\
  \end{array} \right. \ee
  and community-membership
\be \sigma_u = \left\{ 
  \begin{array}{l l}
    1 & \quad \text{ if } u=1, \ldots,1000 \\
    2 & \quad \text{ if } u=1001, \ldots,2000 \\
    3 & \quad \text{ if } u=2001, \ldots,3000. \\
  \end{array} \right. \ee
In Figure \ref{fig::L}, we have plot the eigenvectors corresponding to the first and second largest absolute eigenvalue of $I - \E{D}^{-1/2} \E{A} \E{D}^{-1/2}$, where $A$ is the adjacency matrix and $D$ is the diagonal matrix containing the row sums of $A$. The Laplacian concentrates around $I - \E{D}^{-1/2} \E{A} \E{D}^{-1/2}$ if the minimum degree is large enough (see Section $8$). The community structure is clearly perturbed by the degree-sequence. In general, an additional step is needed to determine the community-membership of all nodes when using the Laplacian.  

Compare this figure to Figure \ref{fig::H}, containing the first two eigenvectors of \\ $\E{D}^{-1} \E{A} \E{D}^{-1}$. The vertices are seen to be clearly divided into three communities. 
\begin{figure}[!ht]
\begin{minipage}[t]{0.45\linewidth}
\centering
\includegraphics[scale=0.4]{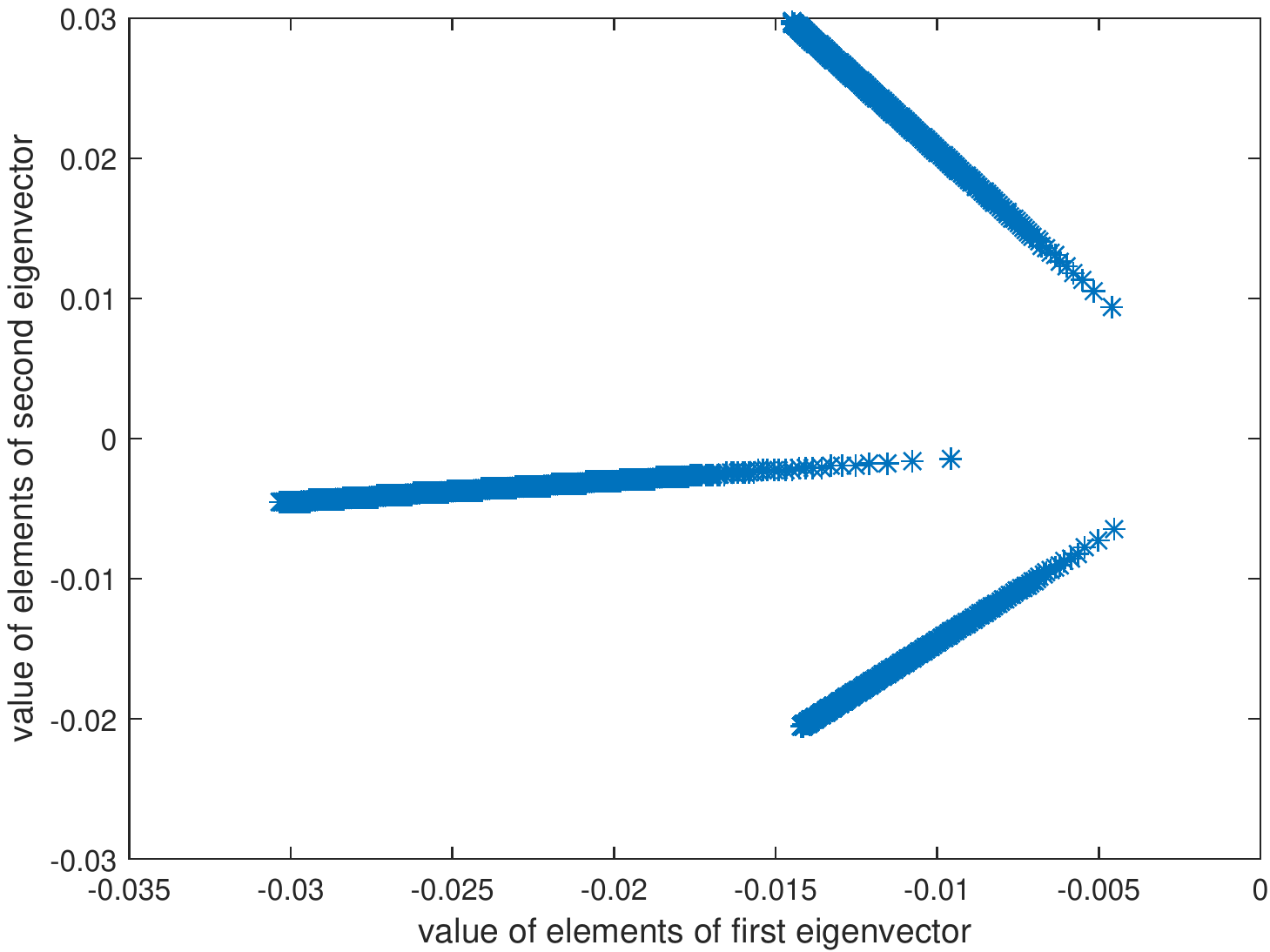}
\caption{Plot of the eigenvectors corresponding to the first and second largest absolute eigenvalue of $I - \E{D}^{-1/2} \E{A} \E{D}^{-1/2}$, where $A$ is the adjacency matrix of a random graph drawn according to the DC-SBM defined at the end of Section 4.2, and $D$ is the diagonal matrix containing the row sums of $A$. 
 For those eigenvectors, say $(x_1, \ldots, x_n)'$ and $(y_1, \ldots, y_n)'$, we draw a dot $(x_u,y_u)$  for each element $u$.}
\label{fig::L}
\end{minipage}
\hspace{0.5cm}
\begin{minipage}[t]{0.45\linewidth}
\centering
\includegraphics[scale=0.4]{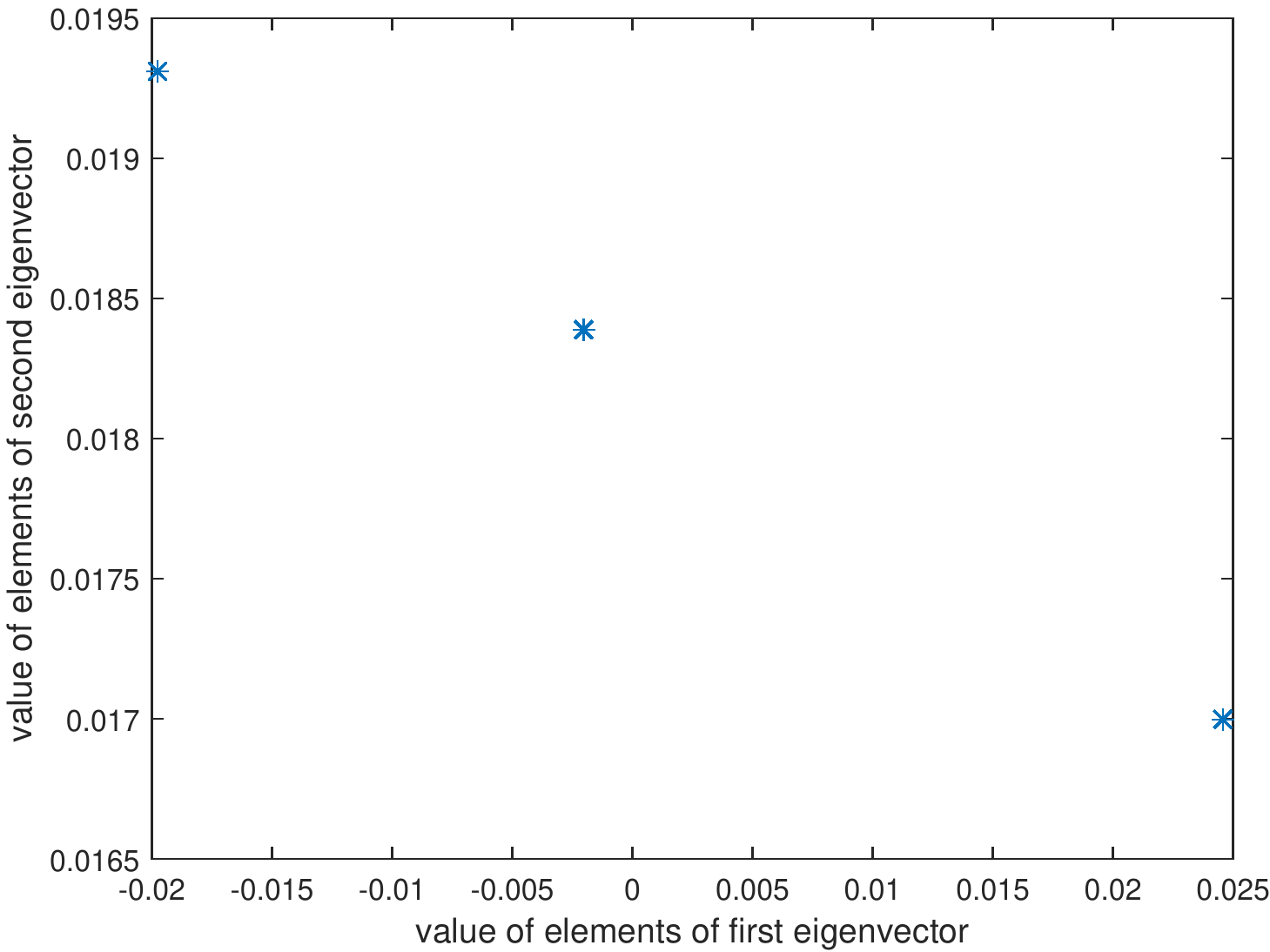}
\caption{Plot of the eigenvectors corresponding to the first and second largest absolute eigenvalue of $\E{D}^{-1} \E{A} \E{D}^{-1}$, where $A$ is the adjacency matrix of a random graph drawn according to the DC-SBM defined at the end of Section 4.2, and $D$ is the diagonal matrix containing the row sums of $A$. 
  For those eigenvectors, say $(x_1, \ldots, x_n)'$ and $(y_1, \ldots, y_n)'$, we draw a dot $(x_u,y_u)$  for each element $u$. Note that many elements are represented by the same dot, clearly reflecting the community structure.}
\label{fig::H}
\end{minipage}
\end{figure}

\textcolor{black}{
Now consider another two-community DC-SBM on $n$ vertices with \[
B = \lr{\begin{matrix} 
1 & 1 \\
1 & 1 \\
\end{matrix}},
\]
degree-sequence
\be D_u = \left\{ 
  \begin{array}{l l}
    \log^2(n) & \quad \text{ if } u \leq n/2 \\
    100 \log^2(n) & \quad \text{ if } u > n/2.
  \end{array} \right. \ee
  and community-membership
\be \sigma_u = \left\{ 
  \begin{array}{l l}
    1 & \quad \text{ if } u \leq n/2 \\
    2 & \quad \text{ if } u > n/2. \\
  \end{array} \right. \ee}
  
  \textcolor{black}{
Then, according to Lemma \ref{lm::alignment_eigenvectors}, the eigenvectors of $H$ become eventually indistinguishable from the eigenvectors of the $n \times n$ matrix with zero-diagonal and all other elements equal to $\frac{1}{n \overline{D}}$. Clearly the communities can not be recovered from the latter matrix.}

\textcolor{black}{
The off-diagonal elements of $\E{D}^{-1/2} \E{A} \E{D}^{-1/2}$ are given by $\frac{1}{n \overline{D}} \sqrt{D_u} \sqrt{D_v} = \frac{2}{101n} Z_{\sigma_u \sigma_v}$, with
$Z = \lr{\begin{matrix} 
1 & 10 \\
10 & 100 \\
\end{matrix}}.
$
Now, $Z$ has eigenvector $\lr{\begin{matrix} 
1 \\
10 \\
\end{matrix}},
$ corresponding to eigenvalue $101$. The other eigenvalue is zero. So that the minimal gap between different eigenvalues of $\E{D}^{-1/2} \E{A} \E{D}^{-1/2}$ is $2 - \bigO(1/n)$. According to \cite{ChRa11}, $\rho \lr{ D^{-1/2} A D^{-1/2} - \E{D}^{-1/2} \E{A} \E{D}^{-1/2} } = o(1)$ w.h.p., where $\rho(X)$ denotes the spectral radius of a matrix $X$. Consequently, Lemma \ref{lm::alignment_eigenvectors} entails that for large $n$, clustering according to the  eigenvector of $D^{-1/2} A D^{-1/2}$, corresponding to its largest eigenvalue, reveals the community-membership of all but a vanishing fraction of nodes.}

\textcolor{black}{
Those two examples hint that whether the Laplacian $L$ or the degree-normalized adjacency matrix $H$ should be used depends on the correlation between the degrees and the communities, and the 'signal-strength' of $B$. The first example shows that if the degrees are uncorrelated, $L$ seems to add some extra noise, whereas $H$ 'filters' the degrees and reflects immediately the underlying communities. In the second example, $B$ gives no information about the communities, but the vertices can be clustered according to their degrees. $H$ ignores this degree-structure and thus fails to detect the communities. $L$ on its turn still reflects the degree-sequence and therefore the communities.}

\subsection{Regularized spectral clustering}
\label{ss::regularized}
\textcolor{black}{The paper \cite{QiRo13} deals with the shortcomings of the Laplacian by inflating the degrees: Given a number $\tau>0$, the regularized graph Laplacian \cite{QiRo13,ChChTs12} is defined as 
\be L_{\tau} = D_{\tau}^{-1/2} A D_{\tau}^{-1/2}, \label{eq::reg_laplacian} \ee
where $D_{\tau} = D + \tau I$.}

\textcolor{black}{The regularized spectral clustering algorithm in \cite{QiRo13} starts with computing the matrix $X= [X_1, X_2, \cdots, X_K]$, where $X_1, X_2, \ldots, X_K$ are the eigenvectors corresponding to the $K$ largest eigenvalues. A matrix $X^{*}$ is then formed by projecting each row of $X$ on the unit sphere. Considering each row of $X^{*}$ as a point in $\mathbb{R}^K$, and applying $k$-means with $K$ centres on these points gives an almost-exact clustering provided some conditions on $\delta + \tau$ ($\delta$ is the smallest expected degree) and the smallest strictly positive eigenvalue of $L_{\tau}$ hold. In particular, condition $(a)$ in Theorem $4.2$ demands that $\delta + \tau \gg$ log$(n)$. Since simulation results suggest that $\tau$ should be taken as the average degree, it is unclear if this method outperforms the algorithm proposed in the underlying paper.} 

\textcolor{black}{We note that \cite{QiRo13} is the first work that relates the leverage scores (the euclidean norm of the rows of $X$) to the quality of the outputted clustering.} 

\subsection{When does the degree-normalized adjacency matrix fail?}
\textcolor{black}{Consider a DC-SBM with $K \geq 2$ communities, such that for two different communities $i \neq j$, for all $l$,
$\frac{B_{il}}{\overline{M}_i} = \frac{B_{jl}}{\overline{M}_j}.$
Then, it can be verified that, for large $n$, in a dense enough regime, the eigenvectors of $H$ corresponding to non-zero eigenvalues, do not distinguish between communities $i$ and $j$.}

\textcolor{black}{Further, the method breaks down in a too sparse regime. For instance, two low-degreed vertices connected by an edge cause the top eigenvectors to concentrate around them. We observed this when applying $\widehat{H}$ on the sparse Political Blogs network \cite{AdGl05}, see Section \ref{ss::real_data}.}

\subsection{Degree-normalized adjacency matrix}
The same matrix $H$ is used in \cite{CoLa09} to perform community detection on the DC-SBM in the sparse regime (the minimum degree is bounded from below by a constant). The main restriction in their setting is that the minimum degree must be of the same order as the average degree, more precisely there exists $\epsilon > 0$ such that $D_i > \epsilon \overline{D}$ for all $i$. Hence too much irregularity in the degree sequence is not captured. In this sense our work complements their results. 

Spectral clustering is performed in \cite{CoLa09} on a minor of $\widehat{H}$ where the rows and columns of vertices with a degree smaller than $D_{\text{average}} / \text{log}(n)$ (where $D_{\text{average}}$ is the observed average degree in the graph) are put to zero, which is not the same as leaving out completely the nodes with a too low degree. Due to the assumption that all expected degrees are of the same order, most observed degree will exceed the lower bound  $D_{\text{average}} / \text{log}(n)$. 

There are alternative ways to deal with low degree nodes, see for instance section 8 on future research. 

\subsection{Performance on real networks: Karate Club, Dolphins and Political Blogs}\label{ss::real_data}
\textcolor{black}{We have tested our method on 3 real networks, namely, Zachary's karate club \cite{Zac77}, the dolphin social network \cite{LuScBoHaSlDa03} and the political blogs dataset \cite{AdGl05}.  The error rate for Zachary's karate club is $2/34$ and for the dolphin social network $0/62$.}
 
\textcolor{black}{The error rate for the political blogs dataset is $230/1221$ when thresholding the Frobenius eigenvector. We restricted to the giant component of $1221$ nodes, as is common in most other works (the original data contained $1490$ blogs). Our clustering is worse than obtained by SCORE (where the error rate is $58/1221$), but similar to the non-backtracking matrix  (where around $15$ percent of the nodes are misclassified \cite{KrMoMoNeSlZdZh13}).}

\textcolor{black}{We observed that the leading eigenvectors are concentrated on a few nodes, due to the presence of certain problematic structures (such as two low-degreed vertices connected by an edge). However, the value of the Frobenius eigenvector on the remaining vertices is still correlated with their community-membership as can be observed in Figures \ref{fig::concentration} and \ref{fig::ranking}.}

\textcolor{black}{Figure \ref{fig::concentration} is a histogram of the Frobenius eigenvector restricted to the roughly $600$ nodes that have corresponding value in the interval $[0,10^{-9}]$. The nodes seem to concentrate around two centres according to their community. However, this phenomenon is only weakly visible (note that our theory does not apply for \emph{sparse} graphs).}

\textcolor{black}{In Figure \ref{fig::ranking} we have sorted the $1221$ indices of the Frobenius eigenvector according to an increasing corresponding value: the community structure becomes then clear.}

\textcolor{black}{We further observed that thresholding the eight-est eigenvector leads to only $160$ misclassified vertices. Interestingly, if we inflate the degrees by replacing  $H = \frac{A_{uv}}{\widehat{D}_u \widehat{D}_v }$
by $H_{\text{inflated}} = \frac{A_{uv}}{\max \{ \widehat{D}_u \widehat{D}_v , 200\} }$, we obtain an error rate of $74/1221$ by thresholding its second eigenvector. This suggests that initial misclassifications are indeed due to low degree nodes (the average degree is $27$, but there are also many leafs present).}

\begin{figure}[!ht]
\begin{minipage}[t]{0.45\linewidth}
\centering
\includegraphics[scale=0.45]{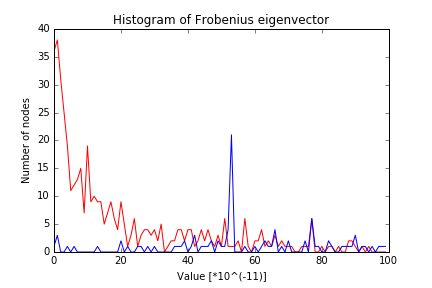}
\caption{Histogram of the Frobenius eigenvector restricted to the roughly $600$ nodes that have corresponding value in the interval $[0,10^{-9}]$. The colors represent the communities.}
\label{fig::concentration}
\end{minipage}
\hspace{0.5cm}
\begin{minipage}[t]{0.45\linewidth}
\centering
\includegraphics[scale=0.45]{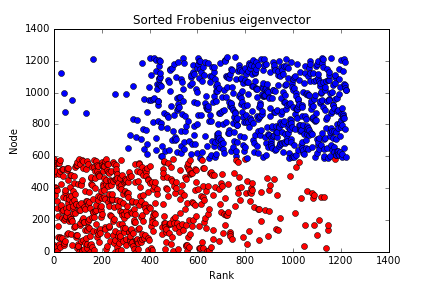}
\caption{Ranking of the $1221$ indices of the Frobenius eigenvector according to an increasing corresponding value. Rank $1$ is the node with smallest value in the eigenvector and rank $1221$ the node with largest value. Colors indicate community-membership.}
\label{fig::ranking}
\end{minipage}
\end{figure} 

\subsection{Interpretation of the conditions}
Note that, since $\mathbb{E}\widehat{D}_u$ is related to $D_u$ according to \eqref{eq::expD}, $\widehat{H}$ normalizes the tendency of communities to connect by the average degree of their nodes and loses therefore some information about the graph. See the observations and remarks below:
\begin{observation}
If, for some $i,j,l \in S$,
\[  \frac{B_{ij}}{M_i} = \frac{B_{lj}}{M_l}, \]
then 
\[ \frac{\E {\# \mbox{edges between community }i \mbox{ and } j }}{ \E { \mbox{total degree of vertices in community } i }} = \frac{ \E { \# \mbox{edges between community }l \mbox{ and } j}}{\E {  \mbox{total degree of vertices in community } l}}. \]
\label{rm::prop}
\end{observation}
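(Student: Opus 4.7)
The plan is a direct expected-value computation using the edge probabilities in \eqref{eq::Prob}; no tool beyond linearity of expectation is required. First I would compute the numerator on the left-hand side: summing $\Pv(u\sim v)=\frac{D_uD_v}{n\overline D}B_{ij}$ over pairs with $\sigma_u=i$ and $\sigma_v=j$ (assuming $i\neq j$ to avoid double-counting) factors cleanly as
\[
\Ev[\#\text{edges between }i\text{ and }j]\;=\;\frac{B_{ij}}{n\overline D}\Big(\sum_{u:\sigma_u=i}D_u\Big)\Big(\sum_{v:\sigma_v=j}D_v\Big).
\]

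Next I would compute the denominator by summing $\Ev[\widehat D_u]$ from \eqref{eq::expD} over $u$ with $\sigma_u=i$; using $M_i=\sum_lD_lB_{i\sigma_l}$, this yields
\[
\Ev[\text{total degree of vertices in community }i]\;=\;\sum_{u:\sigma_u=i}\frac{D_u}{n\overline D}\bigl(M_i-D_uB_{ii}\bigr)\;=\;\frac{M_i}{n\overline D}\Big(\sum_{u:\sigma_u=i}D_u\Big)(1+o(1)),
\]
where the self-loop correction $B_{ii}\sum_{u:\sigma_u=i}D_u^2$ is absorbed into the $o(1)$ factor by the paper's degree-sequence assumptions (it is dominated by $M_i\sum_{u:\sigma_u=i}D_u$ since $M_i=\Theta(n\overline D)$). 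Taking the ratio cancels the common factor $(n\overline D)^{-1}\sum_{u:\sigma_u=i}D_u$ and leaves
\[
\frac{\Ev[\#\text{edges between }i\text{ and }j]}{\Ev[\text{total degree in community }i]}\;=\;\frac{B_{ij}}{M_i}\Big(\sum_{v:\sigma_v=j}D_v\Big)(1+o(1)).
\]
Repeating the same computation with $l$ in place of $i$ gives the analogous expression with $B_{lj}/M_l$ replacing $B_{ij}/M_i$. Under the hypothesis $B_{ij}/M_i=B_{lj}/M_l$, the two ratios therefore agree to leading order.

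There is no real obstacle; the only care needed is bookkeeping the vanishing self-loop correction and noting that in the case $i=j$ edges would be counted twice in the natural adjacency sum, contributing only a combinatorial factor of $1/2$ that cancels identically on both sides. This is also why the statement is most naturally read as an asymptotic equality to leading order, consistent with the way $\Ev[\widehat D_u]$ is approximated throughout the paper (cf.\ \eqref{eq::expD}).
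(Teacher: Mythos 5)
Your proposal is correct and is essentially the paper's own argument: both are direct linearity-of-expectation computations showing that the expected number of $i$--$j$ edges is proportional to $\bigl(\sum_{u:\sigma_u=i}D_u\bigr)B_{ij}\bigl(\sum_{v:\sigma_v=j}D_v\bigr)$ and the expected total degree of community $i$ to $M_i\sum_{u:\sigma_u=i}D_u$, after which the common factors cancel and the hypothesis $B_{ij}/M_i=B_{lj}/M_l$ gives the claim. If anything you are slightly more careful than the paper, which silently includes the excluded diagonal terms $u=v$ in its double sums, whereas you explicitly absorb that self-loop correction into a $(1+o(1))$ factor.
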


\begin{remark}
The identifiability condition is violated if there are distinct $i$ and $l$ and there exists some constant $c > 0$ such that
\[ B_{ij} = c B_{lj} \] for all $j$. Indeed, in that case, $M_i = c M_l$ and thus
\[ \frac{B_{ij}}{M_i} =  \frac{c B_{lj}}{c M_l} = \frac{B_{lj}}{M_l}. \]
However, unlike the setting considered in \cite{LeRi13, Jin15}, it is not necessary for $B$ to be full rank. Indeed, consider 
\[
B = \lr{\begin{matrix} 
1 & 2 & 3 \\
2 & 0 & 2 \\
3 & 2 & 5 \\
\end{matrix}},
\]
which has rank $2$. Let $\alpha_1 = \alpha_2 = \alpha_3 = \frac{1}{3}$ and $ \s{\sigma_u = i}{} D_u = i \alpha_i n   \emph{log}^2(n)$ for all $i = 1,2,3$. Then it is easily verified that the identifiability condition is met. 
\label{rm::violating_ident} 
\end{remark}

Note that $\mathcal{G}(B,K,\{ \sigma_u \}_{u=1}^n,\{ D_u \}_{u=1}^n)$ and $\mathcal{G}(B^*,K,\{ \sigma_u \}_{u=1}^n,\{ D^*_u \}_{u=1}^n)$ generate the same ensemble of random graphs whenever
\[ \frac{D_u B_{\sigma_u \sigma_v} D_v}{ \overline{D}} = \frac{D^*_u B^*_{\sigma_u \sigma_v} D^*_v}{\overline{D^*}}. \]
Hence, the underlying block-matrix $B$ cannot be estimated from a single observation of the graph. Rather,  we may estimate
\be n \overline{D} \approx \sum_u \widehat{D}_u, \ee
and, denoting the assigned community-membership (after applying our reconstruction algorithm) of $l$ by $\tau_l$,
\be \lr{ \sum_u \widehat{D}_u } \frac{\sum_{u:\tau_u = i} \sum_{v:\tau_v = j } \widehat{H}_{uv}}{\lr{\sum_{u:\tau_u = i} 1 } \lr{\sum_{v:\tau_v = j} 1}} \approx  \frac{B_{ij}}{\overline{M}_i \ \overline{M}_j}.  \ee 
Hence, for a DC-SBM $\mathcal{G}(B,K,\{ \sigma_u \}_{u=1}^n,\{ D_u \}_{u=1}^n)$, the matrix
\[ \lr{\frac{B_{ij}}{\overline{M}_i \ \overline{M}_j}}_{i,j=1}^K \]
is identifiable, not $B$. 
It is due to this degeneracy of the DC-SBM and the structure of $\widehat{H}$ that condition \eqref{eq::dist} in Theorem \ref{thm::main} is the best possible:

\begin{lemma}
Consider a DC-SBM $\mathcal{G}(B,K,\{ \sigma_u \}_{u=1}^n,\{ D_u \}_{u=1}^n)$. Fix $i$ and $l$, then the following are equivalent:
\begin{enumerate}
\item for all $j$ we have
\[\frac{B_{ij}}{M_i} = \frac{B_{lj}}{M_l};\]
\item there exist a DC-SBM $\mathcal{G}(B^*,K,\{ \sigma_u \}_{u=1}^n,\{D^*_u\}_{u=1}^n)$, with the same community-structure $\{\sigma_u\}_u$, such that for all $j$,
\[ B^*_{ij} = B^*_{lj} \] and, for all $u,v$,
\[ \frac{D_u B_{\sigma_u \sigma_v} D_v}{ \overline{D}} = \frac{D^*_u B^*_{\sigma_u \sigma_v} D^*_v}{\overline{D^*}}. \]
\end{enumerate}
\label{lm::best_cond}
\end{lemma}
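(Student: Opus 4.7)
The plan is to recognise that both DC-SBMs in the statement are determined by the edge-probability matrix $Q_{uv}=\frac{D_uD_v}{n\overline{D}}B_{\sigma_u\sigma_v}$, and that the allowed reparametrisations of this matrix are exactly those in which the degree corrections are rescaled by a community-dependent factor $r_a$ and $B$ is rescaled accordingly by $1/(r_ar_b)$. Condition (2) is then a statement that such a reparametrisation can be chosen so that rows $i$ and $l$ of the new block matrix coincide, while condition (1) is the invariant form of exactly this requirement.

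For the direction (2) $\Rightarrow$ (1), I would first show that $D^*_u/D_u$ depends only on $\sigma_u$. Indeed, for two vertices $u_1,u_2$ in the same community $a$ and any vertex $v$ such that $B_{a\,\sigma_v}>0$, dividing the edge-probability identity $\frac{D_uD_vB_{\sigma_u\sigma_v}}{\overline{D}}=\frac{D^*_uD^*_vB^*_{\sigma_u\sigma_v}}{\overline{D^*}}$ written for $(u_1,v)$ and $(u_2,v)$ yields $D^*_{u_1}/D_{u_1}=D^*_{u_2}/D_{u_2}$. Calling this common ratio $r_a$, one gets $B^*_{ab}=\frac{\overline{D^*}}{r_ar_b\overline{D}}B_{ab}$, and a one-line computation gives $M^*_a=\frac{\overline{D^*}}{r_a\overline{D}}M_a$. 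The hypothesis $B^*_{ij}=B^*_{lj}$ (for all $j$) forces $M^*_i=M^*_l$, so $B^*_{ij}/M^*_i=B^*_{lj}/M^*_l$; substituting the formulas collapses everything to $B_{ij}/M_i=B_{lj}/M_l$.

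For the direction (1) $\Rightarrow$ (2), I would give an explicit construction: set $D^*_u:=M_{\sigma_u}D_u$ and $B^*_{ab}:=\frac{\overline{D^*}}{M_aM_b\overline{D}}B_{ab}$. Then $B^*$ is symmetric and non-negative, the edge-probability identity is immediate by direct substitution (the factors $M_{\sigma_u},M_{\sigma_v}$ cancel between the $D^*$'s and the denominator of $B^*$), and $B^*_{ij}=\frac{\overline{D^*}}{M_iM_j\overline{D}}B_{ij}=\frac{\overline{D^*}}{M_lM_j\overline{D}}B_{lj}=B^*_{lj}$ after cancelling the common factor $\overline{D^*}/(M_j\overline{D})$, where the middle equality uses (1).

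The only mild subtlety lies in the within-community-constancy argument of the first direction, which requires that for each community $a$ there exists some community $b$ with $B_{ab}>0$. A community whose entire row in $B$ vanishes is stochastically isolated (no edges are ever drawn to or from it), in which case $D^*_u$ on that community is unconstrained and both directions of the equivalence are trivial to check separately. Once this corner is disposed of, the proof is a direct algebraic unpacking of the definitions.
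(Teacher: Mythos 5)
Your proof is correct and follows essentially the same route as the paper: the explicit construction $D^*_u = M_{\sigma_u}D_u$, $B^*_{ab}\propto B_{ab}/(M_aM_b)$ for (1)$\Rightarrow$(2) is exactly the one used there, and your (2)$\Rightarrow$(1) argument (community-constancy of $D^*_u/D_u$, hence $B_{ij}=(r_i/r_l)B_{lj}$ for all $j$, hence equality of the ratios) is a mild repackaging of the paper's vertex-level computation, which derives $B_{ij}=cB_{lj}$ directly and then invokes Remark \ref{rm::violating_ident}. The degenerate corner you flag (a community whose row of $B$ vanishes) is excluded by the paper's standing assumptions, so your explicit treatment of it is, if anything, slightly more careful than the paper's.
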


\subsection{Random Walk point of view}
The matrix $\widehat{H}$ is related to a random walk on an instance of the random graph. Indeed,
\[ \widehat{H}_{uv} = \left\{ 
  \begin{array}{l l}
    \frac{1}{\widehat{D_u}} \frac{1}{\widehat{D_v}} A_{uv} = \frac{A_{uv}}{\widehat{D_u}} \frac{A_{vu}}{\widehat{D_v}} & \quad \text{ if } A_{uv} = 1,\\
    0 & \quad  \text{ if } A_{uv} = 0, \\
  \end{array} \right. \]
since $A_{uv} = A_{vu}$ is either $1$ (in case edge $uv$ is present) or $0$ (when $u$ and $v$ are not connected). Now, $\widehat{D_u} = \s{l=1}{n} A_{lu}$, as it is the observed degree, which we denoted here in increasing order: $\widehat{D_1} \leq \widehat{D_2} \leq \cdots \leq \widehat{D_n}$. Thus, $\frac{A_{uv}}{\widehat{D_u}}$ is exactly the probability that a random walk (in an undirected graph without weights) jumps from vertex $u$ to $v$, given that it is currently at vertex $u$. Denoting the latter probability by $\mathbb{P}_u ( u \to v)$, we see that
\[ \widehat{H}_{uv} = \mathbb{P}_u ( u \to v) \mathbb{P}_v ( v \to u) = \mathbb{P}_u ( u \to v \to u), \]
due to the Markov property of the random walk. In other words, $\widehat{H}_{uv}$ is the probability that a random walk currently at vertex $u$ will consecutively traverse edge $uv$ and back. 

Extending this observation to powers of $\widehat{H}$ leads to: \[  (\widehat{H}^k)_{uv} = \s{l_1 = 1 , \ldots, l_{k-1}=1}{n} \mathbb{P}_u ( u \to l_1 \to \ldots \to l_{k-1} \to v)  \mathbb{P}_v ( v \to l_{k-1} \to \ldots \to l_1 \to u), \]
the probability that a random walk, after traversing a path of length $k$ starting at $u$ and ending at $v$, subsequently traverses that path in the exact opposite direction.

Further, note that
\[ (\widehat{D}_1, \ldots, \widehat{D}_n) \widehat{H} = (\mathds{1}_{\widehat{D}_1 \neq 0}, \ldots, \mathds{1}_{\widehat{D}_n \neq 0}), \]
hence if $\textbf{v}$ is an eigenvector of $\widehat{H}$ with eigenvalue $\lambda$, then
\[ \s{u=1}{n} \mathds{1}_{\widehat{D}_u \neq 0} v_u = \lambda  \s{u=1}{n} \widehat{D}_u v_u. \]
Since it can be easily verified that $\widehat{H}$ is primitive on connected components, the Perron-Frobenius theorem implies that the eigenvector $\textbf{v}_{max}$ corresponding to the largest eigenvalue $\lambda_{max}$ (which is positive) has only positive elements. Hence,
\[  \lambda_{max} = \frac{\s{u=1}{n} \mathds{1}_{\widehat{D}_u \neq 0} v_u}{\s{u=1}{n} \widehat{D}_u v_u} \geq \frac{\s{u=1}{n} \mathds{1}_{\widehat{D}_u \neq 0} v_u}{\widehat{D}_n \s{u=1}{n} \mathds{1}_{\widehat{D}_u \neq 0} v_u} = \frac{1}{\widehat{D}_n}.\]

We may derive an upper bound by noting that the spectral radius is bounded from above by the maximal absolute row sum:
\[ \lambda_{max} \leq \max_{u=1}^n \lr{ \s{v=1}{n} \mathbb{P}_u(u \to v \to u) }.  \]

\section{Outline of proof of main theorem}
In this section we consider the setting of Theorem \ref{thm::main}. All lemmas here, except Lemma \ref{thm::super_Log},  assume either \eqref{eq::growth_sup_log} or \eqref{eq::growth_log} to hold.  Lemma \ref{thm::super_Log} assumes condition \eqref{eq::growth_sup_log}: the minimum degree should grow faster than log$(n)$. Lemma \ref{thm::log} assumes \eqref{eq::growth_log}: the minimum degree is of order  log$(n)$.

 Our first objective is to show that $\widehat{H}$ is close to some matrix $P$, in the sense that their difference $W:=\widehat{H} - P$ has negligible spectral radius relatively to that of $P$. Here, an entry $(u,v)$ of $P$ is defined as
\be P_{uv} = \frac{1}{n \overline{D}} \frac{B_{\sigma_u \sigma_v}}{\overline{M}_{\sigma_u} \overline{M}_{\sigma_v}}. \label{eq::Matrix_overlineP} \ee
We relate $P$ in turn to $Z$ defined by
\be Z_{ij} = \frac{\alpha_j B_{ij}}{\overline{M}_i \overline{M}_j}, \quad i,j \in S. \label{eq::Matrix_Z} \ee
Indeed, we show that if $y = (y(1), \ldots, y(K))^T$ is an eigenvector of $Z$ with eigenvalue $\lambda$, then $(y(\sigma_1), \ldots, y(\sigma_n))^T$ fulfils that role for  $P$ with eigenvalue $\frac{ 1	}{\overline{D}}\lambda $. As a consequence, the eigenvectors of $P$ associated to non-zero eigenvalues are constant on blocks.

Finally, we consider the matrix that has the first $L$ eigenvectors of $P$ as its columns. We show that the rows of this matrix cluster to within vanishing distance of block-specific representatives. 
 We start by inspection of the difference
\be W =  \widehat{H} - P = (\widehat{H} - H) + (H - \E{H}) + (\E{H} - P), \label{eq::W}\ee
where
$H$ is defined as
\be H_{uv} = \left\{ 
  \begin{array}{l l}
    \frac{1}{\mathbb{E}\widehat{D}_u \mathbb{E} \widehat{D}_v}A_{uv} & \quad \text{ if } A_{uv} = 1,\\
    0 & \quad  \text{ otherwise }. \\
  \end{array} \right. \ee

Define 
\[ \Delta(P) = \min \{ |\lambda - \mu|: \lambda \neq \mu, \lambda,\mu \text { eigenvalue of } P  \}, \]
i.e., the smallest gap between consecutive eigenvalues. A crucial role will be played by Lemma \ref{lm::alignment_eigenvectors} below, which says that to any eigenvector $\widehat{x}$ of $\widehat{H}$ there exists an eigenvector $x$ of $P$ such that $||x - \widehat{x}|| \to 0$ as $n \to \infty$, whenever 
\[ \frac{\rho(W)}{\Delta(P)} \to 0, \]
as $n \to \infty$, \textcolor{black}{where we recall that $\rho(X)$ denotes the spectral radius of a matrix $X$}.  Hence, we need to calculate  $\Delta(P)$:  
\begin{lemma}
The smallest gap between subsequent eigenvalues of $P$ is given by
\[ \Delta (P) = \Omega \lr{ \frac{1}{ \overline{D}} }.\]
\label{co::GapP}
\end{lemma}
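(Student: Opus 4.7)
The plan is to reduce the eigenvalue gap problem on the $n \times n$ matrix $P$ to one on the fixed $K \times K$ matrix $Z$, using the spectral correspondence already noted in the outline: if $Zy = \lambda y$ with $y \in \mathbb{R}^K$, then the block-lifted vector $x = (y(\sigma_1), \ldots, y(\sigma_n))^T$ satisfies $Px = (\lambda/\overline{D}) x$. A one-line check from \eqref{eq::Matrix_overlineP}, \eqref{eq::Matrix_Z}, and the fact that $|\{v : \sigma_v = j\}| = \alpha_j n$ verifies this.

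First I would upgrade this to a bijection on nonzero eigenvalues (with multiplicities). Since $P_{uv}$ depends on $u,v$ only through $(\sigma_u,\sigma_v)$, any two rows of $P$ indexed by vertices in the same block are identical, so the range of $P$ is contained in the $K$-dimensional subspace of block-constant vectors. Consequently every eigenvector of $P$ associated to a nonzero eigenvalue $\mu$ is of the form $x_u = y(\sigma_u)$, and the $K$-vector $y$ is then an eigenvector of $Z$ with eigenvalue $\overline{D}\mu$. Comparing dimensions, the nonzero spectrum of $P$ (with multiplicity) equals $\{\lambda/\overline{D} : \lambda \in \mathrm{spec}(Z)\setminus\{0\}\}$.

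Second I would observe that $Z$ is defined entirely in terms of the limits in \eqref{eq::d_i_convergence} and \eqref{eq::M_i_convergence}, hence its entries do not depend on $n$; its spectrum is a fixed subset of $\mathbb{C}$ of size at most $K$. Set
\[
c_1 = \min\{|\lambda| : \lambda \in \mathrm{spec}(Z)\setminus\{0\}\}, \qquad c_2 = \min\{|\lambda - \mu| : \lambda \neq \mu,\ \lambda,\mu \in \mathrm{spec}(Z)\setminus\{0\}\},
\]
both strictly positive constants depending only on $B$ and the limiting parameters (with the convention that $c_2 = +\infty$ if $Z$ has only one nonzero eigenvalue).

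Third I would read off the gap of $P$. Since $\mathrm{spec}(P) = \{0\} \cup \{\lambda/\overline{D} : \lambda \in \mathrm{spec}(Z)\setminus\{0\}\}$ (the zero eigenvalue being present whenever $L < n$), the distance from $0$ to any nonzero eigenvalue of $P$ is at least $c_1/\overline{D}$, and the distance between two distinct nonzero eigenvalues of $P$ is at least $c_2/\overline{D}$. Therefore $\Delta(P) \geq \min(c_1,c_2)/\overline{D} = \Omega(1/\overline{D})$, as claimed. There is no real obstacle here; the only point requiring care is the multiplicity-preservation in the $P\leftrightarrow Z$ correspondence, which follows immediately from the block-constant structure of eigenvectors of $P$ with nonzero eigenvalue.
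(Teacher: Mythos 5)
Your proposal is correct and follows essentially the same route as the paper: lift eigenvectors of the fixed $K\times K$ matrix $Z$ to block-constant eigenvectors of $P$ with eigenvalues scaled by $1/\overline{D}$, use the identical-rows observation to show every eigenvector of $P$ with nonzero eigenvalue arises this way, and read off the gap from the $n$-independent spectrum of $Z$. You spell out the final step (the constants $c_1,c_2$ from $\mathrm{spec}(Z)$) slightly more explicitly than the paper, which simply concludes from the one-to-one correspondence, but the argument is the same.
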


 All terms in the right hand side of \eqref{eq::W} have negligible spectral radius with respect to $\Delta(P)$:
\begin{lemma}
The matrix $\E{H}$ is close to $P$ in the following sense:
\[\rho \lr{\E{H} - P} = \bigO \lr{ \frac{1}{D_1} } \frac{1}{ \overline{D}} = o_n(1) \frac{1}{ \overline{D}}.\]
\label{th::P_OverlineP}
\end{lemma}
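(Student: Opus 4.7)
The plan is to control $\mathbb{E}H-P$ entrywise, package the difference as a conjugation by a near-identity diagonal matrix, and then reduce the spectral radius bound to a bound on $\|P\|$ itself (where I use that both matrices are symmetric, so spectral radius equals operator norm).

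First, I would compute $\mathbb{E}H_{uv}$ explicitly. Since $\mathbb{E}\widehat{D}_u$ is deterministic, $\mathbb{E}H_{uv}=\mathbb{P}(u\sim v)/(\mathbb{E}\widehat{D}_u\mathbb{E}\widehat{D}_v)$ for $u\neq v$; substituting \eqref{eq::Prob} and \eqref{eq::expD} gives
\[\mathbb{E}H_{uv}=\frac{P_{uv}}{(1+\epsilon_n^{(u)})(1+\epsilon_n^{(v)})},\]
where each $\epsilon_n^{(u)}$ packages the self-loop correction $D_uB_{\sigma_u\sigma_u}/(n\overline{D}\,\overline{M}_{\sigma_u})$ together with the rate in \eqref{eq::M_i_convergence}, and under the standing hypotheses is uniformly $O(1/D_1)$. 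On the diagonal, $\mathbb{E}H_{uu}=0$ while $P_{uu}=O(1/(n\overline{D}))$.

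Next, I would introduce the diagonal matrix $F=\mathrm{diag}(1/(1+\epsilon_n^{(u)}))$, so that $\mathbb{E}H = FPF - \mathrm{diag}(FPF)$, and therefore
\[\mathbb{E}H-P = (FPF-P) - \mathrm{diag}(FPF).\]
A one-line telescoping $FPF-P = (F-I)PF + P(F-I)$ gives
\[\|FPF-P\|\leq \|F-I\|\bigl(\|PF\|+\|P\|\bigr)\leq 3\,\|F-I\|\,\|P\|\]
for $n$ large, since $\|F-I\|_{\mathrm{op}}=\max_u|\epsilon_n^{(u)}|/|1+\epsilon_n^{(u)}|=O(1/D_1)=o(1)$ implies $\|F\|\leq 2$ eventually. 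The diagonal correction is a diagonal matrix whose entries are $O(1/(n\overline{D}))$ in absolute value, so its operator norm is $o(1/\overline{D})$ and is absorbed into the main term.

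It remains to bound $\|P\|$. For this I would invoke the relation to the $K\times K$ matrix $Z$ of \eqref{eq::Matrix_Z} recalled just above: every nonzero eigenvalue of $P$ is of the form $\lambda/\overline{D}$ for some eigenvalue $\lambda$ of $Z$, and $P$ is symmetric so $\|P\|=\rho(P)$. Because $Z$ has fixed dimension $K$ and entries bounded independently of $n$, $\rho(Z)=O(1)$, hence $\|P\|=O(1/\overline{D})$. Combining the three estimates then gives $\rho(\mathbb{E}H-P)=O(1/D_1)\cdot(1/\overline{D})$ as claimed.

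The one delicate point I would verify carefully is the uniform bound $\epsilon_n^{(u)}=O(1/D_1)$. The self-loop contribution is at most $D_nB_{\sigma_u\sigma_u}/(n\overline{D}\,\overline{M}_{\sigma_u})$, which is $o(1/D_1)$ once $D_n^2/n^c\to 0$ is combined with $D_1\gtrsim \log n$ (as assumed in \eqref{eq::growth_log}); the contribution from the rate of convergence in \eqref{eq::M_i_convergence} is absorbed by choosing the constant $C_{B,\mathbf{M}}$ appropriately, which is the role it plays in \eqref{eq::growth_log}. Once this uniform estimate is in hand, everything else is routine normed-matrix algebra.
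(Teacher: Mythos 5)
Your proof is correct, and it reaches the same key estimate as the paper's proof --- that for $u\neq v$ the entry $\E{H_{uv}}$ equals $P_{uv}$ up to a uniform multiplicative factor $1+\bigO(1/D_1)$, that the diagonal discrepancy is only $\bigO(1/(n\overline{D}))$, and that $\rho(P)=\bigO(1/\overline{D})$ --- but it converts the entrywise estimate into a spectral bound differently. The paper splits off $\mathrm{diag}(P_{11},\dots,P_{nn})$ and then applies its entrywise-domination Lemma \ref{lm::rhoBoundCOMP}, using that $P$ has nonnegative entries so that $|\E{H}-(P-\mathrm{diag}\,P)|\leq \bigO(1/D_1)\,P$ entrywise and hence $\rho$ of the difference is at most $\bigO(1/D_1)\,\rho(P)$. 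You instead factor the per-vertex errors into a diagonal matrix $F$ with $\E{H}=FPF-\mathrm{diag}(FPF)$ and telescope $FPF-P=(F-I)PF+P(F-I)$ in operator norm; this is a clean alternative that does not need the nonnegativity of $P$ or Lemma \ref{lm::rhoBoundCOMP} at all, at the (mild) cost of checking $\|F\|\leq 2$. Both routes bottom out in $\rho(P)=\rho(Z)/\overline{D}=\bigO(1/\overline{D})$ via the block structure. Two small points to tighten: your claim that the diagonal correction is ``$o(1/\overline{D})$ and absorbed'' should really be the stronger and true statement that it is $\bigO(1/(n\overline{D}))=\bigO(1/D_1)\cdot(1/\overline{D})$ (using $D_1\leq D_n=\bigO(n)$, which follows from \eqref{eq::require_prob}), since $o(1/\overline{D})$ alone is not absorbed by a term of order $(1/D_1)(1/\overline{D})$; and your treatment of the convergence-rate contribution to $\epsilon_n^{(u)}$ via the constant $C_{B,\mathbf{M}}$ is no more precise than the paper's own handling of that point, so it is acceptable here but is the one place where both arguments lean on an unquantified hypothesis.
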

\begin{lemma}
The matrix $H$ concentrates with high probability around its expectation, as follows:
\[\rho \lr{H - \E{H} } = \bigO \lr{ \frac{1}{ \sqrt{\emph{log}(n)}}} \frac{1}{ \overline{D} } = o_n(1) \frac{1}{ \overline{D}}.\]
\label{th::H_min_H_Bar}
\end{lemma}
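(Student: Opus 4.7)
The plan is to view $H-\E{H}$ as a symmetric random matrix with independent centred entries and apply a sharp non-asymptotic bound for its spectral radius, then specialise using assumption~\eqref{eq::degree_hetero}.

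For $u<v$, set $Y_{uv} := (H-\E{H})_{uv} = (A_{uv}-p_{uv})/(\E{\widehat D_u}\, \E{\widehat D_v})$, where $p_{uv} = D_u D_v B_{\sigma_u\sigma_v}/(n\overline D)$. The variables $(Y_{uv})_{u<v}$ are independent and centred, and by~\eqref{eq::expD} we have $\E{\widehat D_u} = \Theta(D_u)$, hence
\[
|Y_{uv}| \le R \;:=\; \bigO\!\lr{\tfrac{1}{D_1^2}}, \qquad \mathrm{Var}(Y_{uv}) \;=\; \bigO\!\lr{\tfrac{1}{n\overline D\, D_u D_v}}.
\]
Summing over $v$ and using $\sum_v 1/D_v \le n/D_1$ yields the row-variance estimate
\[
\sigma^2 \;:=\; \max_u \sum_v \mathrm{Var}(Y_{uv}) \;=\; \bigO\!\lr{\frac{1}{D_1^2\, \overline D}}.
\]

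Next I would invoke a sharp spectral concentration inequality for random symmetric matrices with independent entries that does \emph{not} pay an extra $\sqrt{\log n}$ factor on $\sigma$---for instance the Bandeira-van Handel bound, giving
\[
\E{\rho(H-\E{H})} \;\le\; C\lr{\sigma + R\sqrt{\log n}} \;=\; \bigO\!\lr{\frac{1}{D_1\sqrt{\overline D}} + \frac{\sqrt{\log n}}{D_1^2}}.
\]
Assumption~\eqref{eq::degree_hetero} reads $D_1^2 \gtrsim \overline D \log n$, which collapses both summands to $\bigO\!\lr{1/(\overline D\sqrt{\log n})}$.

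To pass from expectation to high probability I would use that $\rho(\cdot)$ is $1$-Lipschitz with respect to the Frobenius norm of the entries. Talagrand's inequality for convex Lipschitz functions of independent bounded variables then gives $\lvert \rho(H-\E{H})-\E{\rho(H-\E{H})}\rvert = \bigO(R\sqrt{\log n}) = \bigO(\sqrt{\log n}/D_1^2)$ with high probability, which under~\eqref{eq::degree_hetero} is again $\bigO(1/(\overline D\sqrt{\log n}))$.

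The main obstacle is the choice of concentration inequality at the second step: the textbook matrix Bernstein would give $\E{\rho(H-\E{H})} \le C(\sigma\sqrt{\log n}+R\log n)$, which would demand the strictly stronger hypothesis $D_1^2 \gg \overline D \log^2 n$. The sharpness of assumption~\eqref{eq::degree_hetero} therefore hinges on invoking a tighter estimate---either a Bandeira-van Handel type bound, or a direct trace-moment computation in which the heterogeneous entry magnitudes $1/(D_u D_v)$ are tracked carefully along each combinatorial path contributing to $\E{\mathrm{tr}((H-\E{H})^{2k})}$ for $k \asymp \log n$.
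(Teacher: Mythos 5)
Your argument is correct and reaches the stated bound, but it takes a genuinely different route from the paper. The paper does not invoke a general-purpose matrix concentration inequality at all: it rescales $H-\E{H}$ by $D_1^2$ so that the entries lie in $[-p_{uv},1-p_{uv}]$ with $p_{uv}=\Theta\lr{\frac{D_1^2}{n\overline{D}}}$, constructs (following Feige--Ofek, via an auxiliary uniform variable) a two-point random variable $Y_{uv}\in\{-p_{uv},1-p_{uv}\}$ with $\E{Y_{uv}\,|\,\widehat{X}_{uv}}=\widehat{X}_{uv}$, and uses convexity of the spectral norm plus Jensen to dominate $\rho(\widehat{X})$ by $\E{\rho(Y)\,|\,\widehat{X}}$, where $Y$ is a genuine centred Bernoulli adjacency matrix with maximal expected degree $D_1^2/\overline{D}=\Omega(\log n)$; the Feige--Ofek bound $\rho(Y)=\bigO\lr{\sqrt{D_1^2/\overline{D}}}$ then applies, and a short Markov-type argument transfers the high-probability bound on $\rho(Y)$ to the conditional expectation. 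Your route --- row-variance parameter $\sigma=\bigO\lr{1/(D_1\sqrt{\overline{D}})}$, uniform bound $R=\bigO(1/D_1^2)$, the Bandeira--van Handel estimate $\E{\rho}\leq C(\sigma+R\sqrt{\log n})$, and Talagrand for the fluctuations --- is sound (the numerology checks out: both terms collapse to $\bigO\lr{1/(\overline{D}\sqrt{\log n})}$ exactly under \eqref{eq::degree_hetero}) and is arguably cleaner, since it avoids both the coupling and the somewhat delicate step of converting a tail bound on $\rho(Y)$ into one on its conditional expectation. What the paper's approach buys is self-containedness relative to the authors' earlier work and an argument that exploits the Bernoulli structure directly; what yours buys is modularity and a transparent identification of why \eqref{eq::degree_hetero} is the right hypothesis. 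Your closing observation that plain matrix Bernstein would lose a $\sqrt{\log n}$ factor on $\sigma$ and hence require $D_1^2=\Omega(\overline{D}\log^2 n)$ to recover the stated rate is accurate, and correctly pinpoints why either the Feige--Ofek route or a Bandeira--van Handel type bound is needed here.
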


\begin{lemma}
Consider the DC-SBM in the dense regime, where \eqref{eq::growth_sup_log} holds.
Then, for the spectral radius of the difference $\widehat{H} - H$ it holds with high probability that
\[ \rho(\widehat{H} - H) = \bigO \lr{ \sqrt{\frac{\emph{log}(n)}{D_1(n)}} } \frac{1}{\overline{D}(n)} = o_n(1) \frac{1}{ \overline{D}}. \]
\label{thm::super_Log}
\end{lemma}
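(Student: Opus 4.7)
On the event that every $\widehat D_u$ is positive---which, under \eqref{eq::growth_sup_log}, holds with probability $1-o(1)$---define the diagonal matrix $R = \mathrm{diag}\bigl(\mathbb{E}\widehat D_u / \widehat D_u\bigr)$. An entrywise check gives $\widehat H = R H R$, and the algebraic identity $RHR - H = (R-I)H(R-I) + (R-I)H + H(R-I)$ combined with the triangle inequality yields
\[
\rho(\widehat H - H) \;\leq\; \bigl(\|R-I\|_2^{2} + 2\,\|R-I\|_2\bigr)\,\|H\|_2 .
\]
Since $R-I$ is diagonal, $\|R-I\|_2 = \max_u |\widehat D_u - \mathbb{E}\widehat D_u|/\widehat D_u$, so the proof reduces to two independent estimates: a relative-concentration bound on the degrees, and a bound on $\|H\|_2$.

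\textbf{Step 2 (degree concentration --- the main obstacle).} For each $u$, $\widehat D_u = \sum_{v \neq u} A_{uv}$ is a sum of independent Bernoulli random variables with mean $\mathbb{E}\widehat D_u = D_u \overline M_{\sigma_u}(1+o(1))$ by \eqref{eq::expD}; in particular $\mathbb{E}\widehat D_u \geq c\,D_1$ for some constant $c>0$. Bernstein's inequality gives, for every $t>0$,
\[
\mathbb{P}\bigl(|\widehat D_u - \mathbb{E}\widehat D_u| \geq t\bigr) \;\leq\; 2\exp\!\Bigl(-\tfrac{t^{2}}{2\,\mathbb{E}\widehat D_u + 2t/3}\Bigr).
\]
Choosing $t_u = C\sqrt{\mathbb{E}\widehat D_u \,\log n}$ for a sufficiently large absolute constant $C$, and union-bounding over $u$, the hypothesis \eqref{eq::growth_sup_log} forces $t_u / \mathbb{E}\widehat D_u = O\bigl(\sqrt{\log n/D_1(n)}\bigr) = o(1)$ uniformly in $u$. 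Hence $\widehat D_u = (1+o(1))\mathbb{E}\widehat D_u$ for every $u$ with probability $1 - o(1)$, and
\[
\|R - I\|_2 \;=\; \max_u \frac{|\widehat D_u - \mathbb{E}\widehat D_u|}{\widehat D_u} \;=\; O\!\left(\sqrt{\tfrac{\log n}{D_1(n)}}\right) \;=\; o(1).
\]
This is the delicate point of the argument: a Bernstein (rather than Chernoff) bound is precisely what converts the dense-regime hypothesis $D_1(n) \gg \log n$ into the $\sqrt{\log n / D_1(n)}$ factor appearing in the statement.

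\textbf{Step 3 (bounding $\|H\|_2$ and conclusion).} By Lemmas \ref{th::P_OverlineP} and \ref{th::H_min_H_Bar}, $\rho(H - P) = o(1/\overline D)$ with high probability. The matrix $P$ has the rank-$L \leq K$ block structure $P_{uv} = B_{\sigma_u \sigma_v}/(n\,\overline D\,\overline M_{\sigma_u}\overline M_{\sigma_v})$, and by the eigenvalue correspondence recalled at the start of Section~5 its non-zero eigenvalues are exactly those of the $K\times K$ matrix $Z$ of \eqref{eq::Matrix_Z} divided by $\overline D$. Since $Z$ has entries of order $1$, $\rho(P) = O(1/\overline D)$, and consequently $\|H\|_2 \leq \rho(H-P) + \rho(P) = O(1/\overline D)$ with high probability. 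Plugging this and the bound from Step~2 into Step~1 gives
\[
\rho(\widehat H - H) \;\leq\; 3\,\|R-I\|_2\,\|H\|_2 \;=\; O\!\left(\sqrt{\tfrac{\log n}{D_1(n)}}\right)\frac{1}{\overline D(n)},
\]
with high probability, which is the claim.
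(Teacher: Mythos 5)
Your proof is correct and rests on the same two inputs as the paper's own argument: uniform relative concentration of the degrees via Bernstein's inequality plus a union bound (which is exactly where the $\sqrt{\log n / D_1(n)}$ factor comes from), and the bound $\rho(H) = \bigO(1/\overline{D})$ deduced from Lemmas \ref{th::P_OverlineP} and \ref{th::H_min_H_Bar}. The only difference is organizational: you package the perturbation as the diagonal conjugation $\widehat{H} = RHR$ and use submultiplicativity of the operator norm, whereas the paper writes $(\widehat{H}-H)_{uv} = \epsilon_{uv} H_{uv}$ entrywise with $|\epsilon_{uv}| = \bigO(\epsilon(n))$ and invokes the entrywise-domination Lemma \ref{lm::rhoBoundCOMP} --- both routes give $\rho(\widehat{H}-H) \leq \bigO(\epsilon(n))\,\rho(H)$.
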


\begin{lemma}
Consider the DC-SBM in the regime where \eqref{eq::growth_log} holds. 
Then, for the spectral radius of the difference $\widehat{H} - H$ it holds with high probability that
\[ \rho(\widehat{H} - H) = \bigO \lr{ \frac{1}{\emph{log}^{1/3}(n)}  } \frac{1}{\overline{D}(n)} = o_n(1) \frac{1}{ \overline{D}}. \]
\label{thm::log}
\end{lemma}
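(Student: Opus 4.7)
The plan is to reduce the spectral-radius bound on $\widehat{H} - H$ to a control on the deviations of the empirical degrees $\widehat{D}_u$ from their expectations. The key algebraic observation is that $\widehat{H} = D_\delta H D_\delta$, where $D_\delta = \mathrm{diag}(\E\widehat{D}_u/\widehat{D}_u)_{u \in V}$ is diagonal. Setting $\Delta = D_\delta - I$, one obtains
\[
\widehat{H} - H \;=\; \Delta H \;+\; H\Delta \;+\; \Delta H \Delta,
\]
so that $\rho(\widehat{H} - H) \leq (2\|\Delta\|_{op} + \|\Delta\|_{op}^2)\,\rho(H)$. Since $\rho(H) = O(1/\overline{D})$ w.h.p.\ by Lemmas \ref{th::P_OverlineP}--\ref{th::H_min_H_Bar} combined with $\rho(P) = O(1/\overline{D})$, it would suffice to establish $\|\Delta\|_{op} = O(\log^{-1/3}(n))$ w.h.p.

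This is, however, where the log regime is genuinely harder than the super-log one. Writing $\widehat{D}_u$ as a sum of $n-1$ independent Bernoullis with mean $\E\widehat{D}_u \geq C'\log n$ (from \eqref{eq::expD} and \eqref{eq::growth_log}, with $C'$ governed by $C_{B,\textbf{M}}$ and $\overline{M}_{\sigma_u}$), Bernstein's inequality yields
\[
\P\lr{|\widehat{D}_u/\E\widehat{D}_u - 1| \geq t} \;\leq\; 2\exp\!\bigl(-c\,t^2 \log n\bigr), \qquad t \in (0,1].
\]
Taking $t = \log^{-1/3}(n)$ gives a per-vertex tail of $\exp(-c\log^{1/3}(n))$, which tends to $0$ but is \emph{not} summable in $n$; a union bound therefore cannot guarantee $|\Delta_{uu}| \leq t$ simultaneously over all $u$. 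Coping with this failure of uniform concentration is the main obstacle of the proof.

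I would circumvent it by partitioning $V = G \cup B$, with $G := \{u : |\Delta_{uu}| \leq \log^{-1/3}(n)\}$. By Markov's inequality applied to the per-vertex tail above, $\E|B|/n \to 0$, so $|B| = o(n)$ w.h.p. Splitting $\Delta = \Delta_G + \Delta_B$, the good part contributes $\rho(\Delta_G H) \leq \|\Delta_G\|_\infty\,\rho(H) = O(\log^{-1/3}(n)/\overline{D})$. The bad part demands a sparsity-exploiting estimate: $\Delta_B$ is supported on only $|B| = o(n)$ coordinates, and each row $H_u$ has exactly $\widehat{D}_u$ nonzero entries each of order at most $1/(C'\log n)^2$. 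Combining this with the w.h.p.\ lower bound $\widehat{D}_u \geq 1$ (itself a Chernoff consequence of $\E\widehat{D}_u \geq C'\log n$), a Frobenius-type bound
\[
\rho(\Delta_B H)^2 \;\leq\; \|\Delta_B H\|_F^2 \;\leq\; \sum_{u \in B} \lr{\tfrac{\E\widehat{D}_u}{\widehat{D}_u}}^2 \sum_v \frac{A_{uv}}{(\E\widehat{D}_u\,\E\widehat{D}_v)^2}
\]
should, after careful calibration, give a contribution of the required order. The symmetric term $H\Delta_B$ and the quadratic remainder $\Delta H\Delta$ are handled identically.

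The hardest part will be calibrating the threshold $\epsilon_n = \log^{-1/3}(n)$ against the resulting size of $B$ and the row-norm estimates on $H$: this trade-off is precisely what forces the $\log^{-1/3}$ rate, rather than the sharper $\sqrt{\log n/D_1}$ available in Lemma \ref{thm::super_Log}. The secondary hypothesis $D_n^2/n^c \to 0$ in \eqref{eq::growth_log} enters at this stage to prevent a few high-degree vertices from concentrating mass on the bad set, ensuring that the $\Delta_B$-contribution remains subleading relative to the good part.
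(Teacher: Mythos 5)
Your setup is sound and matches the paper's in spirit: the identity $\widehat{H}=D_\delta HD_\delta$ is the same multiplicative-error structure the paper exploits ($\widehat H_{uv}-H_{uv}=\epsilon_{uv}H_{uv}$), you correctly identify that the per-vertex Bernstein tail $\exp(-c\log^{1/3}n)$ is not summable so uniform concentration at rate $\log^{-1/3}(n)$ is unavailable, and the good/bad split of the vertex set is exactly the paper's first move. The gap is in how you control the bad part. A global bound $|B|=o(n)$ is far too weak, and the Frobenius estimate cannot close it: each nonzero row of $\Delta_BH$ has squared $\ell^2$-norm of order $\widehat D_u/(\E\widehat D_u\cdot\min_v\E\widehat D_v)^2\gtrsim 1/\omega(n)^3$ (on the event that all degrees are within a factor $2$ of their means), so $\|\Delta_BH\|_F^2\gtrsim|B|/\omega(n)^3$, while the target is $\epsilon(n)^2/\overline D^2$ with $\overline D\geq\omega(n)$. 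In the critical case $\omega(n)\asymp\log n$ this forces $|B|=O(\log^{1/3}n)$, whereas $\E|B|\asymp n\exp(-c\log^{1/3}n)=n^{1-o(1)}$; so the bad set is enormous compared to what a Frobenius (or any rank-/support-counting) bound can tolerate. No norm inequality that sees only $|B|$ and $\rho(H)$ can work here, since $\|\Delta_B\|_{op}$ is genuinely of order $1$.

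The missing idea is a \emph{local} estimate: what must be shown is that every vertex has at most $\epsilon(n)\cdot\bigO(\E{\widehat D_u})$ \emph{bad neighbours}, so that the row and column sums of the bad-indexed part of $\widehat H-H$ are $\bigO(\epsilon(n)/\omega(n))$. This is the technical heart of the paper's proof, and it is nontrivial because the badness indicators $\{\mathds{1}_{v\text{ bad}}\}_{v\in\mathcal N(u)}$ are dependent (the degrees of two neighbours of $u$ share the edges among $\mathcal N(u)$). The paper resolves this by first showing that w.h.p.\ the number of edges inside $\mathcal N(u)$ is at most $\tfrac12\epsilon(n)\omega(n)$ — this is where the hypothesis $D_n^2=o(n^c)$ is actually used, not (as you suggest) to keep high-degree vertices out of $B$ — and then treating the remaining neighbours' external degrees as independent sums, to which a binomial tail bound applies. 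Without some version of this neighbourhood-level argument your proof does not go through.
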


We use these Lemmas in conjunction with Lemma \ref{lm::alignment_eigenvectors} below to prove:
\begin{lemma}
To each normed eigenvector $\widehat{\textbf{x}}$ of $\widehat{H}$ corresponds a normed eigenvector $\textbf{x}$ of $P$ such that
\[ \widehat{\textbf{x}} \cdot \textbf{x} = 1 - \bigO \lr{ \lr{\frac{\rho(W)}{\Delta(P)}}^2} = 1 - o_n(1), \]
where
\[ \rho(W) \leq \rho(\widehat{H} - H) + \rho(H - \E{H}) + \rho(\E{H} - P). \]
\label{lm::eig_WideH_to_PBar}
\end{lemma}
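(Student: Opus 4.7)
The plan is to derive this as an essentially immediate corollary of the four lemmas that precede it: use the triangle inequality on the decomposition \eqref{eq::W}, plug in the previously-established bounds on each summand, compare to the eigengap lower bound of Lemma~\ref{co::GapP}, and invoke Lemma~\ref{lm::alignment_eigenvectors} as the black box converting a small spectral perturbation into nearby eigenvectors. Concretely, first apply subadditivity of the spectral radius:
\[ \rho(W) \leq \rho(\widehat H - H) + \rho(H - \E{H}) + \rho(\E{H} - P). \]
Lemma~\ref{th::P_OverlineP} gives $\rho(\E{H} - P) = O(1/D_1)\cdot\overline{D}^{-1}$; Lemma~\ref{th::H_min_H_Bar} gives $\rho(H - \E{H}) = O(1/\sqrt{\log n})\cdot\overline{D}^{-1}$ w.h.p.; and the bound on $\rho(\widehat H - H)$ comes from either Lemma~\ref{thm::super_Log} or Lemma~\ref{thm::log}, depending on which of \eqref{eq::growth_sup_log} or \eqref{eq::growth_log} is assumed. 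In both regimes each of the three summands is $o_n(1)/\overline{D}$, so with high probability $\rho(W) = o_n(1)\cdot\overline{D}^{-1}$.

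Next, combine with Lemma~\ref{co::GapP}, which yields $\Delta(P) = \Omega(1/\overline{D})$. Dividing, the ratio $\rho(W)/\Delta(P)$ tends to zero. Lemma~\ref{lm::alignment_eigenvectors} then applies and produces, for each normed eigenvector $\widehat{\mathbf x}$ of $\widehat H$, a normed eigenvector $\mathbf x$ of $P$ with
\[ \|\widehat{\mathbf x} - \mathbf x\| = O\!\lr{\frac{\rho(W)}{\Delta(P)}}, \]
after, if necessary, flipping the sign of $\mathbf x$ so that the inner product is positive (eigenvectors being defined only up to sign).

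To close, convert norm into inner product via the polarization identity for unit vectors, $\|\widehat{\mathbf x} - \mathbf x\|^2 = 2 - 2\,\widehat{\mathbf x}\cdot\mathbf x$, which gives
\[ \widehat{\mathbf x}\cdot\mathbf x = 1 - \tfrac12\|\widehat{\mathbf x} - \mathbf x\|^2 = 1 - O\!\lr{\lr{\frac{\rho(W)}{\Delta(P)}}^2} = 1 - o_n(1), \]
as claimed. The routine work has been done by the preceding lemmas, so no real obstacle remains here; the only mildly delicate point is the sign normalization, which is standard in Davis--Kahan-type statements and is built into the way Lemma~\ref{lm::alignment_eigenvectors} matches eigenvectors.
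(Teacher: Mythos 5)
Your proposal is correct and follows essentially the same route as the paper: triangle inequality for $\rho(W)$, the bounds from Lemmas \ref{th::P_OverlineP}--\ref{thm::log}, the gap bound of Lemma \ref{co::GapP}, and Lemma \ref{lm::alignment_eigenvectors} to align eigenvectors. The only cosmetic difference is that you pass through $\|\widehat{\mathbf{x}}-\mathbf{x}\|$ and polarization, whereas Lemma \ref{lm::alignment_eigenvectors}(iii) already delivers the inner-product bound $\widehat{\mathbf{x}}\cdot\mathbf{x}\geq\sqrt{1-\left(\rho(W)/(\Delta/2)\right)^2}$ directly (with the sign normalization built in), so the paper simply expands the square root.
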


 Having proved this lemma, we show that Algorithm \ref{alg1}  indeed correctly reconstructs the community of all but a vanishing fraction of vertices. 
 
Recall the definition of $\widehat{H}$ and
observe that $\widehat{H}$ is symmetric. Consequently, there exist $n$ eigenvectors of $\widehat{H}$ that form an \emph{orthonormal} basis: thus, we are indeed able to find $L$ \emph{orthonormal} eigenvectors of $\widehat{H}$ corresponding to its first eigenvectors. 

Next we show that the $(\widehat{z}_u)_{u \in V}$, defined in \eqref{def::lines}, tend to block-representatives:
\begin{lemma}
There exist $K$ vectors $\{t_k\}_{k \in S}$, i.e., block-representatives, such that
\[ || \sqrt{n} \widehat{z}_u - t_{\sigma_u}  || = \bigO \lr{ \lr{\frac{\rho(W)}{\Delta(P)}}^{2/3}} = o_n(1) \]
for all but $\bigO \lr{ n\lr{ \frac{\rho(W)}{\Delta(P)}}^{2/3}} $ nodes.
\label{thm::ev_vanishing_dist}
\end{lemma}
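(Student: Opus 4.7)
The plan is to compare each row of the $n \times L$ matrix $\widehat{X} := [\widehat{x}_1 \mid \cdots \mid \widehat{x}_L]$ (whose $u$-th row is precisely $\widehat{z}_u$) to the corresponding row of its orthogonal projection $E\widehat{X}$ onto the top-$L$ eigenspace of $P$. First I would observe that every eigenvector of $P$ associated to a non-zero eigenvalue is constant on blocks --- this is exactly the relation between eigenvectors of $P$ and of $Z$ recalled just before Lemma \ref{co::GapP}. Writing $E = \sum_{k=1}^L x_k x_k^T$ for any orthonormal basis $\{x_k\}$ of that eigenspace, the entries $E_{uv} = \sum_k x_k(u) x_k(v)$ depend only on the pair $(\sigma_u,\sigma_v)$; consequently $(E\widehat{X})_u = (E\widehat{X})_v$ whenever $\sigma_u = \sigma_v$. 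This lets me define the block-representatives $t_k := \sqrt{n}\,(E\widehat{X})_u$ for any $u$ with $\sigma_u = k$.

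Next I would bound the Frobenius distance between $\widehat{X}$ and $E\widehat{X}$ column by column. For each $i \leq L$, Lemma \ref{lm::eig_WideH_to_PBar} furnishes a normed eigenvector $x_i^*$ of $P$ with $\widehat{x}_i \cdot x_i^* = 1 - O((\rho(W)/\Delta(P))^2)$. Because $\widehat{x}_i$ is a top eigenvector of $\widehat{H}$, its eigenvalue is of order $1/\overline{D}$; Weyl's inequality together with $\rho(W) = o(1/\overline{D})$ then forces the associated eigenvalue of $P$ to be non-zero, so $x_i^*$ lies in the range of $P$ and $E x_i^* = x_i^*$. Hence
\[ \|E\widehat{x}_i\| \;\geq\; |E\widehat{x}_i \cdot x_i^*| \;=\; |\widehat{x}_i \cdot E x_i^*| \;=\; |\widehat{x}_i \cdot x_i^*| \;=\; 1 - O\bigl((\rho(W)/\Delta(P))^2\bigr), \]
and since $\|\widehat{x}_i - E\widehat{x}_i\|^2 = 1 - \|E\widehat{x}_i\|^2$, summing over $i=1,\ldots,L$ (with $L \leq K$ bounded) yields
\[ \|\widehat{X} - E\widehat{X}\|_F^2 \;=\; \sum_{i=1}^L \|\widehat{x}_i - E\widehat{x}_i\|^2 \;=\; O\bigl((\rho(W)/\Delta(P))^2\bigr). \]

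Finally I would apply a Markov-type counting argument. Setting $r := \rho(W)/\Delta(P)$ and expanding the Frobenius norm row-wise gives
\[ \sum_{u=1}^n \bigl\|\sqrt{n}\,\widehat{z}_u - t_{\sigma_u}\bigr\|^2 \;=\; n\,\|\widehat{X} - E\widehat{X}\|_F^2 \;=\; O(n r^2). \]
Consequently the number of indices $u$ with $\bigl\|\sqrt{n}\,\widehat{z}_u - t_{\sigma_u}\bigr\| > r^{2/3}$ is at most $O(n r^2)/r^{4/3} = O(n r^{2/3})$, while for every other vertex the bound $\bigl\|\sqrt{n}\,\widehat{z}_u - t_{\sigma_u}\bigr\| \leq r^{2/3} = o_n(1)$ holds automatically. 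The only genuine subtlety in the plan is the justification that each $x_i^*$ supplied by Lemma \ref{lm::eig_WideH_to_PBar} actually lies in the range of $P$ (so that the projection $E$ fixes it); this is handled by the gap argument invoked above, combining Lemma \ref{co::GapP} with the estimates of Lemmas \ref{th::P_OverlineP}--\ref{thm::log} to ensure that the top $L$ eigenvalues of $\widehat{H}$ stay well separated from its bulk of eigenvalues near $0$.
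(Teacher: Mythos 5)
Your argument is correct and essentially the paper's own: both rest on Lemma \ref{lm::eig_WideH_to_PBar}, the block-constancy of eigenvectors of $P$ with non-zero eigenvalue (justified by the same Weyl/gap argument that the relevant eigenvalues of $P$ are non-zero), and the same Markov-type count with threshold $\lr{\rho(W)/\Delta(P)}^{2/3}$. The only, harmless, difference is that you define $t_k$ from the rows of the projection $E\widehat{X}$, whereas the paper takes $t_k$ directly from the block values of the aligned eigenvectors $x_i$ of $P$; the two choices of representatives differ by $o_n(1)$ and yield the same Frobenius bound $\|\widehat{X}-E\widehat{X}\|_F^2 = \bigO\lr{\lr{\rho(W)/\Delta(P)}^2}$.
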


 The remaining and crucial step is to demonstrate that those block-representatives are indeed distinct:
\begin{lemma}
 Assume that for all $i,j$ there exists $i'$ such that
\be \frac{B_{i i'}}{\overline{M}_i \overline{M}_{i'}} \neq \frac{B_{j i'}}{\overline{M}_j \overline{M}_{i'}}, \label{eq::ass_dis} \ee
then
$|t_k - t_l| = \Omega(1)$ for all $k \neq l$.
\label{thm::unique_Representatives}
\end{lemma}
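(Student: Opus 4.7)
The plan is to reduce the distinctness of the block representatives to a linear-algebraic condition on $\mathrm{range}(Z)$, and then to obtain a quantitative $\Omega(1)$ lower bound by passing to the $n\to\infty$ limit.

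From the discussion preceding the lemma, every eigenvector of $P$ for a non-zero eigenvalue is block-constant, $x(u) = \tilde{y}(\sigma_u)$, with $\tilde{y}\in\mathbb{R}^K$ an eigenvector of $Z$ for the corresponding non-zero eigenvalue. Since $Z$ is similar to the symmetric matrix $D_\alpha^{1/2} D_{\overline{M}}^{-1} B D_{\overline{M}}^{-1} D_\alpha^{1/2}$ (writing $D_\alpha, D_{\overline{M}}$ for the diagonal matrices with entries $\alpha_i$ and $\overline{M}_i$), it is diagonalizable over $\mathbb{R}$ and its non-zero-eigenvalue eigenvectors span $\mathrm{range}(Z)$. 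Pick any orthonormal basis $x_1,\ldots,x_L$ of eigenvectors of $P$ for its non-zero eigenvalues, and write $x_i(u) = \tilde{y}_i(\sigma_u)$. Euclidean orthonormality $\sum_u x_i(u)x_j(u) = \delta_{ij}$ translates to $n\sum_k \alpha_k \tilde{y}_i(k)\tilde{y}_j(k) = \delta_{ij}$, and Lemma \ref{thm::ev_vanishing_dist} identifies $t_k = (\sqrt{n}\,\tilde{y}_1(k),\ldots,\sqrt{n}\,\tilde{y}_L(k))^T$.

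Let $Y$ denote the $K\times L$ matrix with columns $\sqrt{n}\,\tilde{y}_i$; then $t_k$ is the $k$-th row of $Y$, so
\[ |t_k - t_l|^2 = \|Y^T(e_k - e_l)\|^2. \]
This vanishes iff $e_k - e_l$ is Euclidean-orthogonal to every column of $Y$, i.e.\ iff $e_k - e_l \in \mathrm{range}(Z)^\perp = \ker(Z^T)$. A direct calculation gives
\[ \bigl(Z^T(e_k - e_l)\bigr)_j = \frac{\alpha_j}{\overline{M}_j}\left(\frac{B_{kj}}{\overline{M}_k} - \frac{B_{lj}}{\overline{M}_l}\right), \]
and the identifiability hypothesis \eqref{eq::ass_dis} (equivalent, after cancelling the common factor $\overline{M}_{i'}$, to the existence of some $j$ making the right-hand side non-zero) prevents this from vanishing for every $j$. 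Hence $e_k - e_l \notin \ker(Z^T)$, and $t_k \neq t_l$.

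To upgrade $t_k \neq t_l$ to $|t_k - t_l| = \Omega(1)$, pass to the limit $n\to\infty$. By \eqref{eq::M_i_convergence} the entries of $Z$ converge to those of a fixed limiting matrix $Z^\ast$, and the identifiability condition transfers unchanged to $Z^\ast$. The individual $\tilde{y}_i$ may fail to depend continuously on $Z$ when $Z$ has degenerate eigenvalues, which is the main technical delicacy; however, the symmetric matrix $YY^T$ is basis-invariant — it depends only on the spectral projector onto $\mathrm{range}(Z)$ — and varies continuously in the entries of $Z$ because the non-zero eigenvalues of $Z^\ast$ are bounded away from $0$, so standard spectral perturbation bounds apply to the sum of the corresponding eigenspace projectors. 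Consequently $|t_k - t_l|^2 = (e_k - e_l)^T YY^T (e_k - e_l)$ converges to its $Z^\ast$-counterpart, which is strictly positive by the previous paragraph. This yields the claimed $\Omega(1)$ lower bound uniformly in $n$.
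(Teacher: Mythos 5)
Your proof is correct, and it rests on the same underlying observation as the paper's: the representatives $t_k$ are the rows of the matrix of (suitably normalized) eigenvectors of $Z$ spanning $\mathrm{range}(Z)$, so $t_k=t_l$ would force a degeneracy between rows $k$ and $l$ of the normalized block matrix, contradicting \eqref{eq::ass_dis}. The execution, however, is genuinely different. The paper argues by contradiction: from $|t_k-t_l|\to 0$ it extracts (implicitly, by compactness) a limiting orthonormal eigenbasis $\{\overline{y}_m\}$ with $\overline{y}_m(k)=\overline{y}_m(l)$ for all $m$, and then reads off $\frac{B_{ku}}{\overline{M}_k\overline{M}_u}=\frac{B_{lu}}{\overline{M}_l\overline{M}_u}$ from the spectral decomposition of the symmetrization $N$ of $Z$. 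You instead identify $|t_k-t_l|^2$ directly as the quadratic form $(e_k-e_l)^T YY^T(e_k-e_l)$, observe that $YY^T$ is determined by the spectral projector onto the range of the symmetrized matrix (hence is independent of the choice of eigenbasis), and reduce vanishing to the single clean condition $e_k-e_l\in\ker(Z^T)$, which is exactly what \eqref{eq::ass_dis} excludes. This buys you two things: it removes the contradiction-plus-limit-extraction step, which the paper does not fully justify, and it makes explicit the only real delicacy (basis dependence when $Z$ has repeated eigenvalues). One simplification is available to you: since $\overline{M}_i$ is already defined in \eqref{eq::M_i_convergence} as a limit, $Z$ in \eqref{eq::Matrix_Z} is a fixed matrix not depending on $n$, so $(e_k-e_l)^T YY^T(e_k-e_l)$ is a fixed positive constant and the spectral-perturbation argument in your last paragraph is unnecessary; the only $n$-dependence to control is the $o(1)$ discrepancy coming from the fact that the vectors $\textbf{x}_i$ aligned with $\widehat{\textbf{x}}_i$ are only approximately orthonormal, which your basis-invariance remark already absorbs.
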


\begin{proof}[Proof of Theorem \ref{thm::main}]
After proving the above lemmas, it remains to show that $\widehat{L}$ in step $(1)$ of Algorithm $1$   with high probability  equals $L$. Further, we should verify that the procedure in step $3$ forms the right clusters.
For the first step notice the following: In the regime where \eqref{eq::growth_sup_log} holds,
\[ \rho(W) = \bigO \lr {\frac{1}{D_1} + \frac{1}{\sqrt{\text{log}(n)}} + \sqrt{\frac{\text{log}(n)}{D_1}} } \frac{1}{\overline{D}}, \]
and in the other regime,  where \eqref{eq::growth_log} holds, 
\[ \rho(W) = \bigO  \lr{ \frac{1}{D_1} + \frac{1}{\sqrt{\text{log}(n)}} + \frac{1}{\text{log}^{1/3}(n)} }\frac{1}{\overline{D}}.  \]
Compare this to $f$ as in Algorithm $1$: depending on the regime, the term in parentheses goes to zero upon division by $f(n)$. To see this, note that due to Bernstein's inequality \eqref{lm::Bernstein}, equation \eqref{eq::bernouilli}, $\widehat{D}_u \in (1/2 \overline{M}_{\sigma_u}, 3/2 \overline{M}_{\sigma_u}) D_u$ for $u = 1$ and $u = n$ with high probability. Hence $\widehat{D}_1$ ( $\widehat{D}_n$ ) is of the same order of magnitude as $D_1$ (respective $D_n$).
Now, due to Lemma \ref{lm::alignment_eigenvectors} below, the first $L$ eigenvalues of $\widehat{H}$ are of order $\frac{1}{\overline{D}} - \bigO(\rho(W)) \gg \frac{f(n)}{\overline{D}}$. The remaining eigenvalues are of order $\bigO(\rho(W)) \ll \frac{f(n)}{\overline{D}}$. Further $D_{\text{average}}$ may be written as twice the sum of $\Omega(n^2)$ independent Bernoulli random variables. It is thus with high probability a constant away from $\overline{D}$. Hence $\widehat{L} = L$ with high probability.

In step $3$, the probability that all picked pairs contain only typical vertices (i.e., whose corresponding rows cluster around $K$ centres) is larger than $(1 - f^{2/3}(n))^{2\tau(n)}$ which tends to one, since $f^{2/3}(n) \tau(n) \to 0$ as $n \to \infty$. Thus, with high probability, for a pair $t$, $\delta(t)$ vanishes in front of $f^{2/3}(n)$ if the vertices in the pair belong to the same community. $\delta(t)$ is of order $\Omega(1)$ otherwise. Hence, $\epsilon$, as defined in step $3$ of Algorithm $1$, is of order $\Omega(1)$, it thus estimates the separation-distance in Lemma \ref{thm::unique_Representatives}.

Further, at most $f(n)^{2/3} \ n$ vertices are not typical. Hence, the chosen ball around $m$ with radius $\epsilon / 8$ contains at least $(f(n)^{1/3} - f(n)^{2/3} ) n \gg f(n)^{2/3} \ n$ typical vertices. Those must necessarily belong to the same community. Since all typical vertices belonging to the same community are at most a distance $f(n)^{2/3}$ apart, all of them are located in the ball of radius $\epsilon/4$ around $m$.

 We see that the algorithm puts, with high probability, all but a vanishing fraction of nodes in $K$ clusters. 
\end{proof}

\section{Algebraic Preliminaries}
We shall make use of the following fact about the spectral radius:
\begin{lemma}
If $|X| \leq Y$ holds entry-wise for two real symmetric matrices $X$ and $Y$, then
\[\rho(X) \leq \rho(Y).\]
\label{lm::rhoBoundCOMP}
\end{lemma}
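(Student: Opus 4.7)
The plan is to exploit the fact that for real symmetric matrices, the spectral radius coincides with the operator $2$-norm, and then bound the $2$-norm of $X$ via the $2$-norm of $Y$ applied to an absolute-value vector.

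First, I would observe that the hypothesis $|X_{ij}| \leq Y_{ij}$ forces $Y$ to be entry-wise non-negative. Since $X$ and $Y$ are real symmetric, they are in particular normal, so
\[
\rho(X) = \|X\|_2 \quad \text{and} \quad \rho(Y) = \|Y\|_2,
\]
where $\|\cdot\|_2$ denotes the operator norm induced by the Euclidean norm. Thus it suffices to prove $\|X\|_2 \leq \|Y\|_2$.

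Next, for an arbitrary vector $v \in \mathbb{R}^n$, let $|v|$ denote the vector with entries $|v_i|$. The triangle inequality gives, for every index $i$,
\[
|(Xv)_i| = \Bigl|\sum_j X_{ij} v_j\Bigr| \leq \sum_j |X_{ij}|\,|v_j| \leq \sum_j Y_{ij}\,|v_j| = (Y|v|)_i,
\]
where in the last inequality I used the entry-wise domination $|X| \leq Y$ together with the non-negativity of $|v_j|$. Squaring and summing over $i$ yields $\|Xv\|_2 \leq \|Y|v|\|_2$.

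Finally, applying the definition of the operator norm of $Y$ to the vector $|v|$, and noting $\||v|\|_2 = \|v\|_2$, I obtain
\[
\|Xv\|_2 \leq \|Y\|_2 \,\||v|\|_2 = \|Y\|_2 \, \|v\|_2.
\]
Taking the supremum over unit vectors $v$ gives $\|X\|_2 \leq \|Y\|_2$, i.e.\ $\rho(X) \leq \rho(Y)$, as required. There is no genuine obstacle here; the argument is essentially a one-liner once one translates spectral radii into operator norms via symmetry, the only thing to be careful about is reminding the reader why $\rho=\|\cdot\|_2$ for symmetric matrices, so that the triangle-inequality bound on $Xv$ is in fact the bound one wants.
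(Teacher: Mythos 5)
Your proof is correct and follows essentially the same route as the paper's: identify $\rho$ with the operator $2$-norm for symmetric matrices, then bound $\|Xv\|\leq\|Y|v|\|$ entry-wise via the triangle inequality and the domination $|X|\leq Y$. If anything, your version is slightly more careful, since the paper's chain writes $\max_{\|z\|=1}\|Y|z|\|=\max_{\|z\|=1}\|Yz\|$ where only the inequality $\leq$ is needed (and is what actually holds in general).
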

\begin{proof}
Due to the Rayleigh-Ritz theorem, we have
\[ \rho(X) = \max_{||z||=1} \lVert Xz \rVert.\]
Hence,
\[ \ba 
\rho(X) &= \max_{||z||=1} ||Xz|| \\
&\leq \max_{||z||=1} || \ Y \ |z| \ || \\
&= \max_{||z||=1} || Yz|| \\
&= \rho(Y).
\ea \]
\end{proof}
\noindent
The following lemma could be of independent interest as a simple alternative to the commonly used David-Kahan theorem:
\begin{lemma}
\label{lm::alignment_eigenvectors}
Let $A, \delta A$ be two $n \times n$ symmetric matrices. Let $\lambda_1 \geq \ldots \geq \lambda_n$ be the eigenvalues of $A + \delta A$ and $\mu_1 \geq \ldots \geq \mu_n$ be the eigenvalues of $A$. Let $ \Delta = \min \{ |\mu_i - \mu_j|: \mu_i \neq \mu_j, \mu_i,\mu_j \text { eigenvalue of } A  \}.$ Assume that $\rho \lr{\delta A} < \frac{\Delta}{2}$. Let $\textbf{v}_i$ be a normed eigenvector of $A + \delta A$ corresponding to eigenvalue $\lambda_i$, for any $i = 1,\ldots, n$. Then, 
\begin{enumerate}
\item $|\lambda_i - \mu_i| \leq \rho(\delta A),$
\item the dimension of the eigenspace $E_i$ of $A + \delta A$ corresponding to the eigenvalue $\lambda_i$ is no larger than the dimension of the eigenspace of $A$ corresponding to the eigenvalue $\mu_i$,
\item there exists a normed eigenvector $\widehat{\textbf{v}}_i$ of $A$ corresponding to eigenvalue $\mu_i$ such that 
\[ \textbf{v}_i \cdot \widehat{\textbf{v}}_i \geq \sqrt{1 - \lr{\frac{\rho(\delta A)}{\Delta/2}}^2}. \]
\end{enumerate}
\end{lemma}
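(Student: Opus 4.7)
The proof has three parts: a Weyl-type eigenvalue perturbation bound, a dimension-comparison for eigenspaces, and the quantitative alignment of eigenvectors. The main engine is a single decomposition of $\textbf{v}_i$ against an orthonormal eigenbasis of $A$; both parts (2) and (3) fall out of the estimate this gives.

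\textbf{Part (1).} I would start from Weyl's inequality for Hermitian matrices, which gives $|\lambda_i - \mu_i| \leq \| \delta A \|_{\mathrm{op}}$, and then recall that for a symmetric (hence diagonalizable with real eigenvalues) matrix the operator norm equals the spectral radius, so $\| \delta A \|_{\mathrm{op}} = \rho(\delta A)$. This is a one-liner.

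\textbf{Part (3) — main estimate.} Pick an orthonormal basis $\{\mathbf{e}_j\}_{j=1}^n$ of eigenvectors of $A$, with $A \mathbf{e}_j = \mu_j \mathbf{e}_j$, and write $\mathbf{v}_i = \sum_j c_j \mathbf{e}_j$. From $(A + \delta A)\mathbf{v}_i = \lambda_i \mathbf{v}_i$ one gets
\[
\sum_{j} c_j(\mu_j - \lambda_i)\mathbf{e}_j \;=\; -\,\delta A\, \mathbf{v}_i,
\]
so, taking squared norms and using orthonormality,
\[
\sum_{j} c_j^2 (\mu_j - \lambda_i)^2 \;=\; \| \delta A \, \mathbf{v}_i\|^2 \;\leq\; \rho(\delta A)^2.
\]
Now I split the sum along $\mu_j = \mu_i$ versus $\mu_j \neq \mu_i$. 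For indices in the second group, $|\mu_j - \mu_i| \geq \Delta$, and by Part (1) $|\mu_i - \lambda_i| \leq \rho(\delta A) < \Delta/2$, so by the triangle inequality $|\mu_j - \lambda_i| > \Delta/2$. Therefore
\[
\Bigl(\tfrac{\Delta}{2}\Bigr)^2 \!\!\! \sum_{j:\mu_j \neq \mu_i} c_j^2 \;\leq\; \rho(\delta A)^2,
\]
which, combined with $\sum_j c_j^2 = 1$, gives $\sum_{j:\mu_j = \mu_i} c_j^2 \geq 1 - (\rho(\delta A)/(\Delta/2))^2$. I then define $\widehat{\mathbf{v}}_i$ to be the normalization of the projection $\sum_{j:\mu_j=\mu_i} c_j \mathbf{e}_j$; this is a unit eigenvector of $A$ with eigenvalue $\mu_i$, and the inner product $\mathbf{v}_i \cdot \widehat{\mathbf{v}}_i$ equals the square root of the same sum, giving the claimed bound.

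\textbf{Part (2).} Let $V_i$ denote the $\mu_i$-eigenspace of $A$ and $\Pi$ the orthogonal projection onto $V_i$. The estimate just obtained applied to an arbitrary unit vector $\mathbf{v} \in E_i$ (the argument never used that $\mathbf{v}_i$ was any particular eigenvector, only that $(A+\delta A)\mathbf{v} = \lambda_i \mathbf{v}$) yields $\|\Pi \mathbf{v}\|^2 \geq 1 - (\rho(\delta A)/(\Delta/2))^2 > 0$, since $\rho(\delta A) < \Delta/2$. Hence $\Pi$ restricted to $E_i$ has trivial kernel, so the linear map $\Pi\big|_{E_i}\colon E_i \to V_i$ is injective, and $\dim E_i \leq \dim V_i$.

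\textbf{Expected obstacle.} There is no genuinely hard step; the only place to be careful is verifying that the index-matched eigenvalue $\mu_i$ is the correct target, i.e.\ that the $c_j$ with $\mu_j = \mu_i$ are indeed the dominant ones. This is exactly where the hypothesis $\rho(\delta A) < \Delta/2$ enters, via the triangle inequality separating $\lambda_i$ from all eigenvalues of $A$ distinct from $\mu_i$. Everything else is bookkeeping.
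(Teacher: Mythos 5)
Your proof is correct, and for parts (1) and (3) it is essentially the paper's own argument: Weyl's inequality for the eigenvalue bound, then the decomposition of $\mathbf{v}_i$ in an orthonormal eigenbasis of $A$, the identity $\sum_j c_j^2(\mu_j-\lambda_i)^2 = \|\delta A\,\mathbf{v}_i\|^2 \le \rho(\delta A)^2$, and the split of the sum according to whether $\mu_j = \mu_i$, with the triangle inequality giving $|\mu_j - \lambda_i| > \Delta/2$ off the target eigenspace. The only place you diverge is part (2): the paper argues by eigenvalue counting, noting that if $\lambda_i$ has multiplicity $d$ then $\lambda_i = \lambda_{i+1} = \cdots = \lambda_{i+d-1}$, so by part (1) the matched $\mu_i,\dots,\mu_{i+d-1}$ are pairwise within $2\rho(\delta A) < \Delta$ of each other and hence all equal; you instead reuse the part-(3) estimate to show the orthogonal projection onto the $\mu_i$-eigenspace is injective on $E_i$. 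Both are valid; yours is slightly more economical in that it recycles the main estimate rather than invoking Weyl a second time, while the paper's version is a one-line consequence of part (1). No gaps.
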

\begin{proof}
$(i)$ is due to Weyl's inequality (see for instance \cite{HoJo85}). \\
To prove $(ii)$, let $d$ be the dimension of $E_i$ and write $\lambda_i = \lambda_{i+1} = \cdots = \lambda_{i+d-1}$. Since $|\lambda_i - \mu_i| \leq \rho(\delta A),$ we have $|\mu_i - \mu_{i+1}| \leq 2 \rho(\delta A) < \Delta$. Thus $\mu_i = \mu_{i+1}$, and similarly for the other eigenvalues. 

To prove $(iii)$, we start with some notation: Let $m$ be the number of distinct eigenvalues of $A$, denote those distinct numbers as $\gamma_1 > \cdots > \gamma_m$. Define $S_i = \{ u \in \{1,\ldots,n \} : \mu_u = \gamma_i \},$ the set of indices of eigenvalues that are all equal to $\gamma_i$. For $u \in \{1,\ldots,n \}$, define $\tau_u \in \{1, \ldots,m \}$ as the unique index such that $u \in S_{\tau_u}$. 
 Write 
\[ \textbf{v}_i = \s{j}{} \alpha_j \textbf{w}_j, \]
where $\{\textbf{w}_j\}_j$ are orthonormal eigenvectors of $A$ with associated eigenvalues $\{\mu_j \}_j$. Then,
\[ (A + \delta A)\textbf{v}_i = \s{j}{} \alpha_j \mu_j \textbf{w}_j + (\delta A )\textbf{v}_i.\]
Hence,
\[ (\delta A )\textbf{v}_i = \s{j \notin S_{\tau_i}}{} \alpha_j (\lambda_i - \mu_j) \textbf{w}_j + \s{j \in S_{\tau_i}}{} \alpha_j (\lambda_i - \mu_j) \textbf{w}_i.  \]
Taking norms on both sides,
\[ (\rho(\delta A))^2 \geq \s{j \notin S_{\tau_i}}{} \alpha^2_j (\lambda_i - \mu_j)^2 \geq \s{j \notin S_{\tau_i}}{} \alpha^2_j(\Delta - \Delta/2)^2 = \lr{1- \s{j \in S_{\tau_i}}{} \alpha_j^2}(\Delta/2)^2, \]
because, by definition $|\mu_i - \mu_j| \geq \Delta$ if $\tau_i \neq \tau_j$,  and our observation $|\lambda_i - \mu_i| \leq \rho(\delta A) < \Delta/2.$
Put 
\[ \widehat{\textbf{v}}_i = \frac{1}{\sqrt{\s{j \in S_{\tau_i}}{} \alpha_j^2}}  \s{j \in S_{\tau_i}}{} \alpha_j \textbf{w}_j,  \]
then
\[ \textbf{v}_i \cdot \widehat{\textbf{v}}_i = \sqrt{\s{j \in S_{\tau_i}}{} \alpha_j^2} \geq \sqrt{1 - \lr{\frac{\rho(\delta A)}{\Delta/2}}^2}.  \]
\end{proof}

\begin{lemma}
\label{lm::deviation_random_matrix}
Consider a square $n \times n$ symmetric zero-diagonal random matrix $A$ such that its elements $A_{uv} = A_{vu}$ are independent Bernoulli random variables with parameters 
\[ \E{A_{uv}} = a_{uv} \frac{ \widehat{\omega}(n)}{n}, \]
where the $a_{uv}$ are constants independent of $n$ and $\widehat{\omega}(n) = \Omega( \emph{log}(n) )$. Then, with probability larger than $1 - \bigO \lr{\frac{1}{n^2}}$, the spectral radius of $A - \E{A}$ satisfies
\[ \rho(A - \E{A}) \leq \bigO \lr{ \sqrt{\widehat{\omega}(n)} }. \]
\end{lemma}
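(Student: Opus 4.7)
The plan is to apply a matrix concentration inequality to the rank-two decomposition
\[ A - \E{A} \;=\; \sum_{1 \leq u < v \leq n} X_{uv}, \qquad X_{uv} \;:=\; (A_{uv} - \E{A_{uv}}) \lr{e_u e_v^T + e_v e_u^T}, \]
where $e_u$ denotes the $u$th standard basis vector. The summands are independent, mean-zero, self-adjoint, and since $A_{uv} \in \{0,1\}$ each satisfies $\rho(X_{uv}) \leq 1$ almost surely, which takes care of the boundedness hypothesis.

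First I would compute the matrix variance proxy $V := \sum_{u<v} \E{X_{uv}^2}$. Using the elementary identity $(e_u e_v^T + e_v e_u^T)^2 = e_u e_u^T + e_v e_v^T$, one has $X_{uv}^2 = (A_{uv} - \E{A_{uv}})^2 (e_u e_u^T + e_v e_v^T)$, so $V$ is diagonal with $V_{uu} = \sum_{v \neq u}\Var(A_{uv})$. Since $\Var(A_{uv}) \leq \E{A_{uv}} = a_{uv}\widehat{\omega}(n)/n$ and the $a_{uv}$ are bounded by a constant, each diagonal entry of $V$ is at most $C\,\widehat{\omega}(n)$, whence $\rho(V) = O(\widehat{\omega}(n))$ and $\sigma_\ast := \max_u \sqrt{V_{uu}} = O(\sqrt{\widehat{\omega}(n)})$.

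Next I would invoke a Bandeira--van Handel bound on the expected operator norm of a random symmetric matrix with independent entries: $\E{\rho(A - \E{A})} \leq C\lr{\sigma_\ast + \sqrt{\log n}}$. Together with the standing hypothesis $\widehat{\omega}(n) = \Omega(\log n)$, which makes $\sqrt{\log n} = O(\sqrt{\widehat{\omega}(n)})$, this yields $\E{\rho(A - \E{A})} = O(\sqrt{\widehat{\omega}(n)})$. A Talagrand-type concentration inequality for the operator norm of a matrix with independent entries bounded by $1$ then supplies sub-Gaussian fluctuations, $\P{\rho(A - \E{A}) \geq \E{\rho(A - \E{A})} + t} \leq \exp(-c t^2)$, and choosing $t$ a sufficiently large multiple of $\sqrt{\log n} = O(\sqrt{\widehat{\omega}(n)})$ produces the claimed $1 - O(n^{-2})$ tail bound at the advertised scale $O(\sqrt{\widehat{\omega}(n)})$.

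The main point demanding care is the passage from the plain matrix Bernstein bound of Tropp, which applied to the decomposition above yields only $\rho(A - \E{A}) = O(\sqrt{\widehat{\omega}(n)\log n})$ with probability $1 - O(n^{-2})$, to the sharper rate $O(\sqrt{\widehat{\omega}(n)})$ stated in the lemma; bridging this $\sqrt{\log n}$ gap is exactly what the Bandeira--van Handel refinement buys, and this is where I expect the only real work to sit. A more elementary alternative would be a direct trace-method argument bounding $\E{\mathrm{tr}((A - \E{A})^{2k})}$ by $n \cdot (C\sqrt{\widehat{\omega}(n)})^{2k}$ for $k = \Theta(\log n)$ via a Füredi--Komlós-style counting of closed walks whose edges are each used an even number of times, in the spirit of Vu's treatment of sparse symmetric random matrices; this is standard but combinatorially heavier than the route above.
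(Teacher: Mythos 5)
Your proposal is correct, but it takes a genuinely different route from the paper. The paper's ``proof'' is a one-line citation: the lemma is Lemma 2 of Tomozei--Massouli\'e, which in turn rests on the Feige--Ofek argument for the spectral norm of sparse random graphs --- an $\varepsilon$-net over the sphere combined with a combinatorial discrepancy bound on the contribution of ``heavy'' vertex pairs, valid precisely in the regime where the maximal expected row sum is $\Omega(\log n)$. You instead reach the same $O(\sqrt{\widehat{\omega}(n)})$ rate by the modern modular path: the rank-two decomposition and variance proxy are set up correctly ($\rho(V)=O(\widehat{\omega}(n))$ since the $a_{uv}$ are uniformly bounded), you rightly observe that plain matrix Bernstein loses a $\sqrt{\log n}$ factor, and you close the gap with the Bandeira--van Handel bound $\mathbb{E}\,\rho(A-\mathbb{E}A)\leq C(\sigma+\sigma_{\infty}\sqrt{\log n})$, which for entries bounded by $1$ and $\widehat{\omega}(n)=\Omega(\log n)$ gives $O(\sqrt{\widehat{\omega}(n)})$ in expectation; Talagrand's convex-Lipschitz concentration (the operator norm is convex and $\sqrt{2}$-Lipschitz in the $\binom{n}{2}$ independent entries, each supported on an interval of length $1$) then upgrades this to the stated $1-O(n^{-2})$ tail. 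Two small points of bookkeeping: make sure to invoke the version of Bandeira--van Handel for bounded (not only Gaussian) entries, e.g.\ their Corollary 3.12, and note that Talagrand concentrates around the median, which differs from the mean by $O(1)$. The trade-off between the two routes is the usual one: Feige--Ofek is elementary and self-contained but combinatorially heavy; your route is short and clean but imports two nontrivial black boxes. Your trace-method alternative would also work and is closest in spirit to being fully self-contained.
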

\begin{proof}
This is precisely Lemma 2 in \cite{ToMa14}, where we quantified the term \emph{with high probability}. We did this by choosing $c_1 > 3$ in its proof. Note that the latter proof builds further on results by Feige and Ofek \cite{FeOf05}.
\end{proof}

\begin{lemma}[Bernstein's inequality]
Let $X_1, \ldots, X_n$ be zero-mean independent random variables all bounded from above by one. Put $\sigma^2 = \frac{1}{n} \s{u=1}{n} \emph{var}(X_u)$. Then, 
\[ \P{ \frac{1}{n} \s{u=1}{n} X_u > \epsilon } \leq \emph{exp} \lr{ - \frac{n \epsilon^2}{2(\sigma^2 + \epsilon / 3)} }. \]
\label{lm::Bernstein}
\end{lemma}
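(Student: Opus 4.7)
The plan is to establish the bound by the classical Chernoff-moment-generating-function argument, ending with the standard inequality that converts the exact Bennett exponent into the Bernstein form. Write $S_n = \sum_{u=1}^n X_u$. For any $t > 0$, Markov's inequality applied to the nonnegative random variable $e^{t S_n}$ yields
\[
\mathbb{P}\!\left(\tfrac{1}{n} S_n > \epsilon\right) \le e^{-tn\epsilon}\,\mathbb{E}[e^{tS_n}] = e^{-tn\epsilon}\prod_{u=1}^n \mathbb{E}[e^{tX_u}],
\]
where independence has been used.

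Next I would control each factor $\mathbb{E}[e^{tX_u}]$ using the fact that $X_u \le 1$ and $\mathbb{E}[X_u] = 0$. Since $X_u^k \le X_u^2$ for every integer $k \ge 2$ (as $|X_u|^{k-2} \le 1$), one gets $\mathbb{E}[X_u^k] \le \mathrm{var}(X_u) =: \sigma_u^2$. Expanding the exponential,
\[
\mathbb{E}[e^{tX_u}] \;=\; 1 + t\,\mathbb{E}[X_u] + \sum_{k\ge 2}\frac{t^k\mathbb{E}[X_u^k]}{k!} \;\le\; 1 + \sigma_u^2(e^t - 1 - t) \;\le\; \exp\!\bigl(\sigma_u^2(e^t - 1 - t)\bigr),
\]
using $1 + x \le e^x$. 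Taking the product and recalling that $\sum_u \sigma_u^2 = n\sigma^2$, the bound becomes
\[
\mathbb{P}\!\left(\tfrac{1}{n} S_n > \epsilon\right) \le \exp\!\bigl(-tn\epsilon + n\sigma^2(e^t - 1 - t)\bigr).
\]

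Now I would optimize in $t > 0$. Differentiation shows that the exponent is minimized at $t^* = \log(1 + \epsilon/\sigma^2)$, giving the Bennett-type inequality
\[
\mathbb{P}\!\left(\tfrac{1}{n} S_n > \epsilon\right) \le \exp\!\bigl(-n\sigma^2\, h(\epsilon/\sigma^2)\bigr), \qquad h(u) := (1+u)\log(1+u) - u.
\]

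The last step, which is where the Bernstein-specific denominator $\sigma^2 + \epsilon/3$ appears, is the elementary inequality
\[
h(u) \;\ge\; \frac{u^2}{2 + \tfrac{2}{3}u}, \qquad u \ge 0,
\]
which can be checked by comparing derivatives at $u = 0$ (both sides vanish with equal second derivative) and confirming that $h'(u) - \tfrac{d}{du}\bigl[u^2/(2+2u/3)\bigr] \ge 0$ throughout $u \ge 0$. Substituting $u = \epsilon/\sigma^2$ gives $n\sigma^2 h(\epsilon/\sigma^2) \ge n\epsilon^2/(2(\sigma^2 + \epsilon/3))$, which produces exactly the claimed inequality.

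The only nontrivial step is the last one, verifying $h(u) \ge u^2/(2+2u/3)$; everything prior is a transparent combination of independence, boundedness and convexity. With that inequality in hand, Bernstein's bound follows without further work.
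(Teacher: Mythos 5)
The paper itself offers no proof of this lemma beyond a citation to Bernstein's original work, so your self-contained Chernoff--Bennett derivation is necessarily a different (and more informative) route; it is also the standard one, and both the optimization over $t$ (yielding the Bennett exponent $-n\sigma^2 h(\epsilon/\sigma^2)$) and the final elementary inequality $h(u)\ge u^2/(2+\tfrac{2}{3}u)$ are correct as you state them. There is, however, one step that does not match the stated hypotheses: you bound $\mathbb{E}[X_u^k]\le\mathrm{var}(X_u)$ for $k\ge 2$ by arguing $X_u^k\le X_u^2$ pointwise ``since $|X_u|^{k-2}\le 1$''. The lemma only assumes $X_u\le 1$, i.e.\ one-sided boundedness; if $X_u$ can take values below $-1$, then for even $k\ge 4$ one has $X_u^k=|X_u|^k>X_u^2$ and your pointwise inequality fails. (In the paper's applications the variables are centred Bernoullis, hence lie in $[-1,1]$, so nothing downstream actually breaks, but the lemma as stated is the one-sided version.) The standard repair keeps everything else intact: instead of comparing moments term by term, use that $x\mapsto (\e^{x}-1-x)/x^2$ is nondecreasing on $\mathbb{R}$, so for $X_u\le 1$ and $t>0$ one has $\e^{tX_u}\le 1+tX_u+X_u^2(\e^{t}-1-t)$ pointwise; taking expectations gives $\mathbb{E}[\e^{tX_u}]\le 1+\sigma_u^2(\e^{t}-1-t)$ directly, after which your argument proceeds verbatim.
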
 
\begin{proof}
See \cite{Ber46}.
\end{proof}

 Note that Bernstein's lemma can easily be extended to the case of non-centred random variables. 

\section{Proofs}
In the proofs below, we shall often write 
\be D_u = \phi_u \omega(n), \label{def::phi_u}\ee
 where $1 = \phi_1 \leq \phi_2 \leq \cdots \leq \phi_n$, and 
 \be \omega(n) = D_1 \label{def::omega}. \ee 
Further, we introduce 
\be  g(n) = \s{l = 1}{n} \phi_l, \label{def::g} \ee
\be \overline{\phi}(n) = \frac{g(n)}{n}, \label{def::phi_bar} \ee 

\begin{proof}[Proof of Lemma \ref{co::GapP}]
Write 
\[ P_{uv} = \frac{1}{n \overline{D}} \frac{ B_{\sigma_u \sigma_v}}{\overline{M}_{\sigma_u}  \  \overline{M}_{\sigma_v} } = \frac{1}{n \overline{D}} \frac{Z_{\sigma_u \sigma_v}}{\alpha_{\sigma_v}}. \] 
Let $\textbf{y} = (y(1), \ldots, y(K))^T$ be an eigenvector of $Z$ with eigenvalue $\lambda$, we show that $\textbf{w} = (y(\sigma_1), \ldots, y(\sigma_n))^T$ is an eigenvector of $P$ with eigenvalue $\frac{ 1}{\overline{D}} \lambda .$ Indeed,
\[ \ba
P \textbf{w} &=  \left( \begin{array}{ccc}
\s{l=1}{n} P_{1l} \cdot y(\sigma_l)  \\
\vdots \\
\s{l=1}{n} P_{nl} \cdot y(\sigma_l)  \end{array} \right) \\
&= \frac{ 1}{n \overline{D}} \left( \begin{array}{ccc}
\s{l=1}{n}  Z_{\sigma_1 \sigma_l} / \alpha_{\sigma_l} \cdot y(\sigma_l)  \\
\vdots \\
\s{l=1}{n}  Z_{\sigma_n \sigma_l} / \alpha_{\sigma_l} \cdot y(\sigma_l)  \end{array} \right)\\
&= \frac{ 1}{n \overline{D}} \left( \begin{array}{ccc}
\s{k=1}{K} n \ \alpha_{k} \ Z_{\sigma_1 k} / \alpha_{k} \cdot y(k)  \\
\vdots \\
\s{k=1}{K} n \ \alpha_{k} \  Z_{\sigma_n k} / \alpha_{k} \cdot y(k)  \end{array} \right)\\
&= \frac{ 1}{ \overline{D}} \left( \begin{array}{ccc}
 \lambda y(\sigma_1)  \\
\vdots \\
 \lambda y(\sigma_n)  \end{array} \right) \\
&= \frac{ 1}{\overline{D}} \lambda \textbf{w}.
\ea \] 
Thus $\frac{ 1}{ \overline{D}}\lambda $ is an eigenvalue of $P$. 

 For the other direction, note that if $\sigma_u = \sigma_v$, then row $u$ and row $v$ in $P$ are identical. Hence, if $\textbf{w} = (w(1), \ldots, w(n))^T$ is an eigenvector of $P$ corresponding to a  non-zero eigenvalue, then $w(u) = w(v)$. Let $\textbf{w} = (w(\sigma_1), \ldots, w(\sigma_n))^T$ be an eigenvector of $P$ with eigenvalue $\lambda \neq 0$. By carrying out a similar calculation as above, we see that $(w(1), \ldots, w(K))^T$ is an eigenvector of $Z$ with eigenvalue $\overline{D} \lambda$. 

The statement follows from this one-to-one correspondence between the eigenvectors of both matrices corresponding to non-zero eigenvalues.
\end{proof}

\begin{proof}[Proof of Lemma \ref{th::P_OverlineP}]
Note that
\[ \E{H} - P = \E{H} - (P -\text{diag}(P_{11}, \ldots, P_{nn}) ) +\text{diag}(P_{11}, \ldots, P_{nn}).  \]
Now, 
\[ \rho(\text{diag}(P_{11}, \ldots, P_{nn}) ) = \bigO \lr{ \frac{1}{n \overline{D}}}, \]
as
$\text{diag}(P_{11}, \ldots, P_{nn} )$ contains only $K$ different elements, each of order $\frac{1}{n \overline{D}}$.
Further, for $u \neq v$,
\[ \ba
\E{H_{uv}} &= \frac{D_u}{\E{\widehat{D}_u}} \frac{D_v}{\E{\widehat{D}_v}} B_{\sigma_u \sigma_v} \frac{1}{n \overline{D}} \\
&= (1 + \delta(n)) \frac{ B_{\sigma_u \sigma_v}}{\overline{M}_{\sigma_u}  \  \overline{M}_{\sigma_v} } \frac{1}{n \overline{D}} \\
&= P_{uu} + \delta(n) P_{uu},
 \ea \]
where $\delta(n) =\bigO(\epsilon_n)$, with, due to \eqref{eq::expD},  $\epsilon_n \leq \max_i \frac{2}{\overline{M}_i} \frac{1}{\omega(n)}$  tending to zero uniformly for all $u,v$. 
Hence, due to Lemma \ref{lm::rhoBoundCOMP}, 
\[ \rho \lr{ \E{H} - \lr{ P -\text{diag}(P_{11}, \ldots, P_{nn}) }+ \text{diag}(P_{11}, \ldots, P_{nn}) }= \bigO \lr{ \frac{1}{D_1} } \frac{1}{ \overline{D}}. \]
\end{proof}

\begin{proof}[Proof of Lemma \ref{th::H_min_H_Bar}]
We start by introducing the constants $C_{ij} =  \frac{B_{ij}}{\overline{M}_i \overline{M}_j}$, and $\alpha = \max_{ij} \frac{1}{\overline{M}_i \overline{M}_j}$.
Put for $u < v$,
\[X_{uv} = X_{vu} = \alpha^{-1} \omega^2(n)  \lr{H_{uv} - \E{H_{uv}} }, \]
where $\omega(n)$ is defined in \eqref{def::omega}.
That is,
\[X_{uv} = \frac{1+o(1)}{\alpha} \lr{ \frac{1}{\overline{M}_{\sigma_u} \overline{M}_{\sigma_v}} \frac{1}{\phi_u \phi_v} \text{Ber} \lr{ \frac{D_u D_v}{n \overline{D} }  B_{\sigma_u \sigma_v}  } - C_{\sigma_u \sigma_v} \frac{\omega(n)}{\overline{\phi}} \frac{1}{n} }, \]
with $\phi_u$ and $\overline{\phi}$ defined in \eqref{def::phi_u}, respectively, \eqref{def::phi_bar}.
Due to our choice of $\alpha$ and the assumption that $\phi_u \geq 1$ for all $u$, 
\[ X_{uv} \in (1+o(1)) \left[ - p_{uv}, 1 - p_{uv}  \right], \]
where 
\[p_{uv} = \frac{C_{\sigma_u \sigma_v}}{\alpha} \frac{\omega(n)}{\overline{\phi}} \frac{1}{n}.\]

Let $\widehat{X}_{uv} = \frac{X_{uv}}{1+o(1)}$ such that $\widehat{X}_{uv} \in \left[ - p_{uv}, 1 - p_{uv}  \right] $.
We shall compare the symmetric zero-diagonal matrix $\widehat{X}$ to the deviation  from its expectation of another symmetric zero-diagonal matrix, where elements $uv$ are given by $\text{Ber} \lr{p_{uv}} $, for $u \neq v$. Since by assumption \eqref{eq::degree_hetero},
\be \frac{\omega(n)}{  \overline{\phi}(n)} = \frac{D_1^2(n)}{\overline{D}(n)} =  \Omega( \text{log}(n)), \label{eq::deg_het}\ee
Lemma \ref{lm::deviation_random_matrix} applies. Following an argument given in \cite{ToMa14}, we construct a function $Y_{uv}$ such that  $Y_{uv}$ has values only in $\{ - p_{uv}, 1 - p_{uv} \}$ and $\E{Y_{uv} \left| \widehat{X}_{uv} \right. } = \widehat{X}_{uv}.$ First, let $\{U_{uv} \}_{u<v}$ be independent uniformly distributed random variables. Fix $u < v$. Define, for $x \in \left[ - p_{uv}, 1 - p_{uv}  \right]$ and $w \in [0,1]$,
\[ F_{uv}(x,w) = 1 - p_{uv} - 1_{x \leq w -p_{uv}}, \]
and,
\[ Y_{uv} = Y_{vu} = F_{uv}(\widehat{X}_{uv}, U_{uv}). \]
Then,
\[ \P{F_{uv}(\widehat{X}_{uv}, U_{uv}) = 1 - p_{uv} \left| \widehat{X}_{uv} \right.} = \widehat{X}_{uv} + p_{uv}, \]
and,
\[ \P{F_{uv}(\widehat{X}_{uv}, U_{uv}) =  - p_{uv} \left| \widehat{X}_{uv} \right.} = 1 - p_{uv} - \widehat{X}_{uv} , \]
thus,
\[ \E{Y_{uv} \left| \widehat{X}_{uv} \right. } = \widehat{X}_{uv}, \]
and,
\[ \P{ Y_{uv} = 1 - p_{uv} } = \E{\widehat{X}_{uv}} + p_{uv} = p_{uv}. \]
Hence, indeed, $Y_{uv} = \text{Ber} \lr{p_{uv}} - p_{uv}$.

Let $Y$ be the symmetric zero-diagonal matrix with each element $uv$ given by $Y_{uv}$, for $u \neq v$. Then, according to Lemma \ref{lm::deviation_random_matrix}, 
\be \P{ \rho(Y) \leq \bigO \lr{\sqrt{\frac{\omega(n)}{\overline{\phi}}}}} \geq 1 - \bigO \lr{1 / n^2}. \label{eq::rhoY}\ee
We shall use this observation in the following comparison,
\[ \rho \lr{ \widehat{X} } = \rho \lr{ \E{Y \left| \widehat{X} \right. }} \leq \E{ \rho(Y)  \left| \widehat{X} \right.},  \] 
by Jensen's inequality.
 Put $S = \E{ \rho(Y)  \left| \widehat{X} \right.}$, we shall show that it is also upper bounded by $\bigO \lr{\sqrt{ \omega(n) / \overline{\phi}}}$.

Firstly, note that $|Y|$ is element-wise dominated by the all-one matrix, hence $\rho(Y) \leq n$. Secondly, the sigma-algebra generated by $S$ is contained in the sigma-algebra generated by $\widehat{X}$. Hence,
\[ \E{\rho(Y)|S} = \E{ \left. \E{\rho(Y)\left| \widehat{X} \right.} \right|S} = \E{ S | S} = S. \]

Further, both $Y$ and $\widehat{X}$ take only finitely many different values, and thus  $\rho(Y)$ and $S$ take values in a finite space. It therefore makes sense to consider, for $t > 0$, the function
\[ \beta(\cdot) = \P{\rho(Y) > t | S = \cdot}. \]
We have,
\[ S = \E{\rho(Y)|S} \leq \beta(S)n + (1-\beta(S))t, \]
i.e.,
\[ \beta(S) \geq \frac{S-t}{n-t}. \]
Denote $\gamma = \P{S > t + 1}$, then
\[ \ba \P{\rho(Y) > t} &= \E{\beta(S)} \\
&\geq \E{\beta(S) \mathds{1}_{S > t+1}} \\
&\geq \frac{\gamma}{n-t}.  \ea \]
As a consequence, for $t = \bigO \lr{\sqrt{ \omega(n) / \overline{\phi}}}$, by \eqref{eq::rhoY} one has
\[ \ba  \P{S > t + 1} &= \gamma \leq (n-t)\P{\rho(Y) > t} \\
&= (n-t)\bigO \lr{1 / n^2} \\
&= \bigO \lr{1 / n}. \ea \]
Therefore,
\[ \ba \rho(H - \E{H}) &= \frac{\alpha}{\omega^2(n)} \rho(X) \\
& \leq (1 + o(1))\frac{\alpha}{\omega^2(n)} \rho(\widehat{X}) \\
& \leq \bigO \lr{ \sqrt{\frac{1}{\overline{\phi} \omega^3(n)}} }, \ea \] 
where the first inequality stems from the fact that the order $1 + \epsilon_n$ term in \eqref{eq::expD} holds uniformly over all vertices. 
Finally, due to \eqref{eq::deg_het},
\[ \rho(H - \E{H}) = \bigO \lr{ \sqrt{\frac{\overline{\phi}}{\omega(n)}} \frac{1}{\overline{\phi} \omega(n)} } =   \bigO\lr{ \frac{1}{\sqrt{\text{log}(n)}}  } \frac{1}{ \overline{D}} .  \]

\end{proof}

\begin{proof}[Proof of Lemma \ref{thm::super_Log}]
To prove this theorem we show that in the present setting, with high probability, 
\[ (\widehat{H}-H)_{uv} = \epsilon_{uv} H_{uv},  \]
where, for some constant $\widehat{C}$ and all large enough $n$,
\be |\epsilon_{uv}| \leq \widehat{C} \epsilon(n), \label{eq::eps_uv} \ee 
with
\be  \epsilon(n) :=   \sqrt{\frac{6}{\min_i \overline{M_i}} \frac{2 \text{log}(n)}{\omega(n)} } = \bigO \lr{ \sqrt{\frac{\text{log}(n)}{D_1(n)}} } \to 0, \label{eq::eps} \ee
 by assumption. 
Consequently, after an appeal to Lemma  \ref{lm::rhoBoundCOMP},
\be \rho( \widehat{H} - H) \leq \rho(|\widehat{H} - H|) \leq \widehat{C} \epsilon(n) \rho( H). \label{eq::rho_widH_H}  \ee
Since, $H = \E{H} + H - \E{H}$, it follows from Lemmas \ref{th::P_OverlineP} and \ref{th::H_min_H_Bar} that
\[ \rho(H) = \bigO \lr{\frac{1}{\overline{D}}}, \]
which completes the proof.

 Consider the difference 
\[ \frac{1}{\widehat{D}_u}\frac{1}{\widehat{D}_v} - \frac{1}{\mathbb{E}\widehat{D}_u} \frac{1}{\mathbb{E}\widehat{D}_v} = \frac{1}{\mathbb{E}\widehat{D}_u}\frac{1}{\mathbb{E}\widehat{D}_v} \frac{1}{1 + \frac{\widehat{D}_u - \mathbb{E}\widehat{D}_u}{\mathbb{E} \widehat{D}_u}} \frac{1}{1 + \frac{\widehat{D}_v - \mathbb{E}\widehat{D}_v}{\mathbb{E} \widehat{D}_u}} - \frac{1}{\mathbb{E}\widehat{D}_u}\frac{1}{\mathbb{E}\widehat{D}_v} =  \frac{1}{\mathbb{E}\widehat{D}_u \mathbb{E}\widehat{D}_v} \epsilon_{uv},\]
thus 
\[\epsilon_{uv} = \frac{\mathbb{E}\widehat{D}_u - \widehat{D}_u}{\mathbb{E}\widehat{D}_u} + \frac{\mathbb{E}\widehat{D}_v - \widehat{D}_v}{\mathbb{E}\widehat{D}_v} + \bigO  \lr{ \lr{\frac{\mathbb{E}\widehat{D}_u - \widehat{D}_u}{\mathbb{E}\widehat{D}_u}}^2} + \bigO \lr{ \lr{\frac{\mathbb{E}\widehat{D}_v - \widehat{D}_v}{\mathbb{E}\widehat{D}_v}}^2}.\]
We quantify $\frac{\mathbb{E}\widehat{D}_u - \widehat{D}_u}{\mathbb{E}\widehat{D}_u}$. Since $\widehat{D}_u$ is a sum of Bernoulli random variables with mean
\[ \E{\widehat{D}_u} = D_u \overline{M}_{\sigma_u}(1-o(1)), \]
where the $o(1)$ term follows from \eqref{eq::expD}, we have for $\epsilon(n)$ as in \eqref{eq::eps}, the Bernstein's inequality (see \eqref{lm::Bernstein}),
\be \ba
\P{ \left| \frac{\mathbb{E}\widehat{D}_u - \widehat{D}_u}{\mathbb{E}\widehat{D}_u} \right| > \epsilon(n)} &\leq 2 \text{exp} \lr{-\frac{\epsilon^2(n)}{2 + \epsilon(n)/3} \E{\widehat{D}_u}} \\
&= 2 \text{exp} \lr{-\frac{\epsilon^2(n)}{2 + \epsilon(n)/3}D_u \overline{M}_{\sigma_u}(1-o(1))} \\
&\leq 2 \text{exp} \lr{-\frac{\epsilon^2(n)}{3}  \omega(n) \frac{\overline{M}_{\sigma_u}}{2}} \\
&\leq \frac{2}{n^2}. \label{eq::bernouilli}
 \ea \ee Invoking this we establish the union bound 
\[ \ba
\P{ \left| \frac{\mathbb{E}\widehat{D}_1 - \widehat{D}_1}{\mathbb{E}\widehat{D}_1} \right| \leq \epsilon(n), \ldots, \left| \frac{\mathbb{E}\widehat{D}_n - \widehat{D}_n}{\mathbb{E}\widehat{D}_n} \right| \leq \epsilon(n)} &\geq 1 -  \s{u=1}{n} \P{ \left| \frac{\mathbb{E}\widehat{D}_u - \widehat{D}_u}{\mathbb{E} \widehat{D}_u} \right| > \epsilon(n)} \\
&\geq 1 - \frac{2}{n} \to 1, 
 \ea \]
as $n \to \infty$. Hence, 
\[ E = \left\{ \left| \frac{\mathbb{E}\widehat{D}_u - \widehat{D}_u}{\mathbb{E}\widehat{D}_u} \right| \leq \epsilon(n) \mbox{ for all } u \in V \right\} \]
holds with high probability. Thus we establish \eqref{eq::eps_uv}: $| \epsilon_{uv}| \leq 2 \epsilon(n) + \bigO(\epsilon^2(n)) \leq \widehat{C} \epsilon(n)$, with $\widehat{C}$ a large enough constant. 

 We henceforth condition on $E$. Then, for $u \neq v$,
\[ \ba \widehat{H}_{uv} - H_{uv} &= \lr{\frac{1}{\widehat{D}_u}\frac{1}{\widehat{D}_v} - \frac{1}{\mathbb{E}\widehat{D}_u \mathbb{E}\widehat{D}_v}} A_{uv} \\
&= \epsilon_{uv} \frac{1}{\mathbb{E}\widehat{D}_u \mathbb{E}\widehat{D}_v} A_{uv} \\
&=  \epsilon_{uv} H_{uv}.
 \ea \]
 \end{proof}

\begin{proof}[Proof of Lemma \ref{thm::log}]
We define
 \be \epsilon(n)  = \frac{1}{\text{log}^{1/3}(n)} \label{eq::max_psi}\ee 
and we shall call a vertex $u$ good if $| \mathbb{E}\widehat{D}_u - \widehat{D}_u| \leq \epsilon(n) \mathbb{E}\widehat{D}_u$. We use this definition to split
\be \left| \frac{1}{\widehat{D}_u} \frac{1}{\widehat{D}_v} - \frac{1}{\mathbb{E}\widehat{D}_u} \frac{1}{\mathbb{E}\widehat{D}_v} \right| A_{uv} = M_{uv} + M^{\text{c}}_{uv} + M^{\text{r}}_{uv} - M^{\text{cr}}_{uv}, \label{eq::diff} \ee
where 
\[ M_{uv} = \left| \frac{1}{\widehat{D}_u} \frac{1}{\widehat{D}_v} - \frac{1}{\mathbb{E}\widehat{D}_u} \frac{1}{\mathbb{E}\widehat{D}_v} \right| A_{uv} \mathds{1}_{\{u \text{ and } v \text{ good}\}}, \]
\[M^{\text{c}}_{uv} = \left| \frac{1}{\widehat{D}_u} \frac{1}{\widehat{D}_v} - \frac{1}{\mathbb{E}\widehat{D}_u} \frac{1}{\mathbb{E}\widehat{D}_v} \right| A_{uv} \mathds{1}_{\{v \text{ bad}\}}, \]
\[M^{\text{r}}_{uv} = \left| \frac{1}{\widehat{D}_u} \frac{1}{\widehat{D}_v} - \frac{1}{\mathbb{E}\widehat{D}_u} \frac{1}{\mathbb{E}\widehat{D}_v} \right| A_{uv} \mathds{1}_{\{u \text{ bad}\}}, \]
\[M^{\text{cr}}_{uv} = \left| \frac{1}{\widehat{D}_u} \frac{1}{\widehat{D}_v} - \frac{1}{\mathbb{E}\widehat{D}_u} \frac{1}{\mathbb{E}\widehat{D}_v} \right| A_{uv} \mathds{1}_{\{u \text{ and } v \text{ bad}\}}. \]

We shall show that all terms in \eqref{eq::diff} have a negligible spectral radius compared to $\Delta(P)$. First note that the difference
 \[ \frac{1}{\widehat{D}_u}\frac{1}{\widehat{D}_v} - \frac{1}{\mathbb{E}\widehat{D}_u}\frac{1}{\mathbb{E}\widehat{D}_v},\]
may be written as
\[ \frac{1}{\mathbb{E}\widehat{D}_u \mathbb{E}\widehat{D}_v} \epsilon_{uv}, \]
where \[\epsilon_{uv} = \frac{\mathbb{E}\widehat{D}_u - \widehat{D}_u}{\mathbb{E}\widehat{D}_u} + \frac{\mathbb{E}\widehat{D}_v - \widehat{D}_v}{\mathbb{E}\widehat{D}_v} + \bigO  \lr{ \lr{\frac{\mathbb{E}\widehat{D}_u - \widehat{D}_u}{\mathbb{E}\widehat{D}_u}}^2 }  + \bigO \lr{ \lr{\frac{\mathbb{E}\widehat{D}_v - \widehat{D}_v}{\mathbb{E}\widehat{D}_v}}^2}.\]
Now, similarly as in the proof of Lemma \ref{thm::super_Log}, there exists a constant $\widehat{C}$, such that
$\epsilon_{uv} \leq \widehat{C} \epsilon(n)$ if both $u$ and $v$ are good. Consequently,
$\rho(M) \leq \widehat{C} \epsilon(n) \rho(\widehat{H}).$

 Next we analyse the other terms in \eqref{eq::diff}. We start with $M^{\text{c}}$. The idea is that, although now 
\[ \left| \frac{1}{\widehat{D}_u} \frac{1}{\widehat{D}_v} - \frac{1}{\mathbb{E}\widehat{D}_u} \frac{1}{\mathbb{E}\widehat{D}_v} \right| = \mathcal{O} \lr{ \frac{1}{\mathbb{E}\widehat{D}_u} \frac{1}{\mathbb{E}\widehat{D}_v} }, \]
the total number of non-zero elements in a column of $M^{\text{c}}$ is very small, so that its spectral radius indeed vanishes upon division by  $\Delta(P)$. We note that
\[ \lr{A_{uv} \mathds{1}_{\{v \text{ bad}\}}}_{u,v} = \lr{A_{uv} \mathds{1}_{\{u \text{ bad}\}}}^T_{u,v}, \]
so that a similar statement holds for the maximal row sum of $M^{\text{r}}$. Obviously, $M^{\text{cr}} \leq M^{\text{c}}$, and so do their spectral radii. 

As a consequence of these observations, it thus suffices to prove our claim for $M^{\text{c}}$. To do so, we proceed in three steps:
First we show that 
\be \P{ \mathcal{E}_1 } = \P{ \left \{ \forall u: \left| \frac{\mathbb{E}\widehat{D}_u - \widehat{D}_u}{\mathbb{E}\widehat{D}_u} \right| \leq 1/2\right \} } \geq  1 - 2/n^2. \label{eq::step1} \ee 
From which it follows after a short computation that, with probability larger than $1 - 2/n^2$, for all $u,v$,
\[ \left| \frac{1}{\widehat{D}_u} \frac{1}{\widehat{D}_v} - \frac{1}{\mathbb{E}\widehat{D}_u} \frac{1}{\mathbb{E}\widehat{D}_v} \right| \leq 3 \frac{1}{\mathbb{E}\widehat{D}_u} \frac{1}{\mathbb{E}\widehat{D}_v}. \]
Keeping this in mind, it thus suffices to demonstrate that $(A_{uv} \mathds{1}_{\{v \text{ bad}\}})_{uv}$ has a spectral radius much smaller than the spectral radius of $A$. The column sum in the former equals the number of bad neighbours a vertex has. That is, the spectral radius is bounded by $\max_u X_u$, where for $u \in V$,
\be X_u = \sum_{v \in \mathcal{N}(u)} Z_v, \label{eq::X_u} \ee
with,
\[ Z_v = \mathds{1}_{\{v \text{ is bad}\}}. \] 
Caution is needed here as the indicator functions in \eqref{eq::X_u} are not independent. 

 In the second step we shall show that with high probability the number of edges between vertices in the neighbourhood of $u$ is negligible compared to the expected degree of vertex $u$. That is, 
\be \P{ \mathcal{E}_2(u) } = \P{ \left \{  \sum_{x,y \in \mathcal{N}(u)} A_{xy} \leq \frac{1}{2} \epsilon(n) \omega(n) \right \} } \geq 1 - 2/n^2, \label{eq::E2} \ee
where $\omega(n)$ is defined in \eqref{def::omega}.
 Hence, except for possibly $\frac{1}{4} \epsilon(n) \omega(n)$ of them, the variables in \eqref{eq::X_u} form an independent set (conditional on not having any neighbours among $\mathcal{N}(u)$). 
 
  The last step consists in showing that this leads to 
\be \P{\left. X_u > \epsilon(n) \bigO \lr{\mathbb{E}\widehat{D}_u} \right|  \mathcal{E}_1, \mathcal{E}_2 } = o(1/n). \label{eq::E3} \ee 
The assertion follows now straightforwardly: with high probability, we have 
\[ \ba
\sum_{v} M^{\text{c}}_{uv} &\leq 3 \frac{1}{\mathbb{E}\widehat{D}_u} \max_v \frac{1}{\mathbb{E}\widehat{D}_v} X_u \\&\leq 3 \frac{1}{\mathbb{E}\widehat{D}_u} \max_v \frac{1}{\mathbb{E}\widehat{D}_v} \epsilon(n) \bigO \lr{\mathbb{E}\widehat{D}_u} \\
&\leq \bigO \lr{ \frac{\epsilon(n)}{\min_v \mathbb{E}\widehat{D}_v } } \\
&= \bigO \lr{ \frac{\epsilon(n)}{\omega(n)} }.
 \ea \]
Now $\overline{D} = \bigO(\omega(n))$, since $\frac{D_1^2(n)}{\overline{D}(n)} = \Omega(\text{log}(n))$.
Consequently, due to the choice of $\epsilon(n)$ in \eqref{eq::max_psi},
\[ \rho(M^{\text{c}}) =  \bigO\lr{\frac{\epsilon(n)}{\omega(n)}} = \bigO \lr{ \frac{1}{\text{log}^{1/3}(n)} } \frac{1}{\overline{D}(n)} = o_n(1) \frac{1}{\overline{D}(n)}. \]

The first step, i.e. demonstrating equation \eqref{eq::step1}, is easily carried out:
Fix $u \in V$ and use Bernstein's inequality \eqref{lm::Bernstein} to verify the bound
\[ \P{ \left| \frac{\mathbb{E}\widehat{D}_u - \widehat{D}_u}{\mathbb{E}\widehat{D}_u} \right| > 1/2} \leq 2 \text{exp} \lr{-\frac{3}{26} \E{\widehat{D}_u}}. \]
Now, for $n$ large enough, $\mathbb{E}\widehat{D}_u \geq \overline{M} C_{B,\textbf{M}}  \text{log} n$, and by assumption, $C_{B,\textbf{M}}$ from \eqref{eq::growth_log} is so large that $\frac{3}{26} \E{\widehat{D}_u} > 2 \text{log}(n).$ Hence,
\[ \P{ 1/2 \mathbb{E}\widehat{D}_u \leq \widehat{D}_u \leq 3/2 \mathbb{E}\widehat{D}_u } \geq 1 - 2/n^2. \]

We proceed with the second step, i.e., \eqref{eq::E2}. Put $\overline{M} = \max_{i} \overline{M_i},$ $\overline{B} = \max_{i,j} B_{ij}$. Set 
$C = \max \{1/2 \overline{M},5\overline{M}^2, \overline{B}  \}$. Consider, conditional on $\widehat{D}_u \leq 2 \mathbb{E}\widehat{D}_u$,
\[ \ba  \sum_{x,y \in \mathcal{N}(u)} A_{xy} &= \sum_{x,y\in \mathcal{N}(u)  } \text{Ber} \lr{  B_{\sigma_x \sigma_y} \frac{\phi_x \phi_y \omega(n)}{g(n)} } \\
&\leq \text{Bin} \lr{4 (\mathbb{E}\widehat{D}_u)^2, \overline{B}  \frac{\phi_x \phi_y \omega(n)}{g(n)} } \\
&\leq \text{Bin} \lr{5\overline{M}^2 \phi_u^2 \omega^2(n), \overline{B}  \frac{\phi_x \phi_y \omega(n)}{g(n)} } \\
&\leq \text{Bin} \lr{5\overline{M}^2	 \phi_n^2 \omega^2(n), \overline{B} \frac{ \phi_n^2 \omega(n)}{g(n)} }\\
&\leq \text{Bin} \lr{C	 \phi_n^2 \omega^2(n), C \frac{ \phi_n^2 \omega(n)}{g(n)} },
\ea \]
where $\phi_u$ and $g(n)$ are defined in \eqref{def::phi_u}, respectively \eqref{def::g}.
 We now show that 
\[ \P{\text{Bin} \lr{C \phi_n^2 \omega^2(n), C  \frac{\phi_n^2 \omega(n)}{g(n)} } \geq \frac{1}{2} \epsilon(n) \omega(n)} = o(1/n). \]
First, note that
\be \P{\text{Bin} \lr{C \phi_n^2 \omega^2(n), C  \frac{\phi_n^2 \omega(n)}{g(n)} } \geq  \frac{1}{2} \epsilon(n) \omega(n)} \leq {C \phi_n^2 \omega^2(n) \choose  \frac{1}{2} \epsilon(n) \omega(n)} \lr{ C  \frac{\phi_n^2 \omega(n)}{g(n)} }^{ \frac{1}{2} \epsilon(n) \omega(n)} \label{eq::bin_bound}.  \ee
Using that ${n \choose k} \leq (\frac{ne}{k})^k$, we have
\[ \ba {C \phi_n^2 \omega^2(n) \choose  \frac{1}{2} \epsilon(n) \omega(n)} &\leq \lr{2 Ce \frac{\phi_n^2 \omega(n)  }{\epsilon(n)} }^{ \frac{1}{2} \epsilon(n) \omega(n)} \\
&= \text{exp} \lr{  \frac{1}{2} \epsilon(n) \omega(n) \text{log} \lr{ 2Ce \frac{ \phi_n^2 \omega(n)}{\epsilon(n)}  }  } \\
&\leq \text{exp} \lr{  \frac{c}{2}  \epsilon(n) \omega(n)\text{log} \lr{ g(n) } +  \frac{1}{2} \epsilon(n) \omega(n) \text{log} \lr{ 2Ce} },
 \ea \]
 where $c < \frac{1}{2}$ from \eqref{eq::growth_log} is such that $\frac{\phi_n^2 \omega^2(n) }{\text{log}^{2/3}(n) n^c} \to 0$ (and thus $\frac{\phi_n^2 \omega(n) }{g^c(n)} = \frac{o_n(1)}{\text{log}^{1/3}(n)} \leq \epsilon(n)$, since $g(n) = \Theta(n)$ in the particular setting of this lemma).
Write 
\[ \ba \lr{ C  \frac{\phi_n^2 \omega(n)}{g(n)} }^{\frac{1}{2} \epsilon(n) \omega(n)} &= \text{exp} \lr{ -\frac{1}{2}  \epsilon(n) \omega(n) \text{log} \lr{ (g(n))^{1-c} \frac{(g(n))^{c}}{C \phi_n^2 \omega(n)} } } \\
&\leq \text{exp} \lr{ -\frac{1}{2}  \epsilon(n) \omega(n) \text{log} \lr{ (g(n))^{1-c}  } } \\
&= \text{exp} \lr{ -\frac{1-c}{2}  \epsilon(n) \omega(n) \text{log} \lr{ g(n) } }, \ea\]
if $n$ large enough.
Combining these estimates, we see that \eqref{eq::bin_bound} may be bounded from above by
\[ \ba 
\text{exp} \lr{  -\frac{1 - 2c}{2}  \epsilon(n) \omega(n)\text{log} \lr{ g(n) } + \frac{1}{2} \epsilon(n) \omega(n) \text{log} \lr{ 2Ce } } \\ \leq \lr{  -\frac{1 - 2c}{4}  \epsilon(n) \omega(n)\text{log} \lr{ g(n) } },
\ea \]
 since $g(n) \geq n \gg 2Ce $.
Finally, since $\frac{1-2c }{4} \epsilon(n) \omega(n) \geq 2,$ for large $n$,
\be \P{\mathcal{E}_2} = 1 - \P{\sum_{x,y \in \mathcal{N}(u)  } A_{xy} \geq  \frac{1}{2} \epsilon(n) \omega(n) } \geq 1 -  e^{-\text{log} (g^2(n))} \geq 1 - 1/n^2,\label{eq::interconn} \ee
that is \eqref{eq::E2}.

We proceed with the last step, i.e., establishing \eqref{eq::E3}. Write, 
\[ X_u = \sum_{v \in \mathcal{N}(u): \mathcal{N}(v) \cap \mathcal{N}(u) \neq \emptyset } Z_v  + \sum_{v \in \mathcal{N}(u): \mathcal{N}(v) \cap \mathcal{N}(u) = \emptyset } Z_v. \]
We already know from \eqref{eq::E2} that the  first sum is smaller than $ \frac{1}{2} \epsilon(n) \omega(n)$, with high probability.
The variables in the second sum,  $\{ Z_v \}_{v \in \mathcal{N}(u): \mathcal{N}(v) \cap \mathcal{N}(u) = \emptyset}$, are independent. For such a vertex $v \in \mathcal{N}(u)$ that has no neighbour with $u$ in common, we have $\widehat{D}_v = d'_v + 1$, where
\[ d_v' =  \s{x \notin \mathcal{N}(u), \\ x \neq u}{} \text{Ber} \lr{  B_{\sigma_v \sigma_x} \frac{D_v D_x }{n \overline{D}} }, \]
the degree of $v$ outside $\mathcal{N}(u) \cup \{u\}$. We show that $v$ is a good vertex with high probability, by proving that $d_v'$ concentrates on its mean which on its turn is close to $\E{\widehat{D}_v}$.
Firstly,
define 
\[ \mathbb{E}_*[\cdot] := \E{ \cdot \left| \mathcal{N}(u), \mathcal{E}_2, \widehat{D}_u \leq 2 \E{\widehat{D}_u} \right.},  \]
then
\[ \ba \mathbb{E}_*[d_v']  &= \s{x \notin \mathcal{N}(u),  x \neq u}{}   B_{\sigma_v \sigma_x} \frac{D_v D_x }{n \overline{D}} \\
&= \s{  x \neq v}{}   B_{\sigma_v \sigma_x} \frac{D_v D_x }{n \overline{D}} - \s{x \in  \mathcal{N}(u) \cup \{u\}, x \neq v }{} B_{\sigma_v \sigma_x} \frac{D_v D_x }{n \overline{D}} \\
&\geq \E{\widehat{D}_v} - \bigO \lr{ \frac{\phi_n^2 \omega(n)}{g(n)} }\E{\widehat{D}_u} \\
&= \E{\widehat{D}_v} - \bigO \lr{ \frac{\phi_n^3 \omega^2(n)}{g(n)}   } \\
&= \E{\widehat{D}_v} - o_n(1).
\ea  \]
Secondly, we use Bernstein's inequality \eqref{lm::Bernstein} to prove that $d'_v$ concentrates around $\E{\widehat{D}_v}$ upto a factor $\epsilon(n)$ as in \eqref{eq::max_psi}:
\[ \ba &\P{ d'_v \geq (1+ \epsilon(n)) \mathbb{E}\widehat{D}_v \left| \mathcal{N}(u), \mathcal{E}_2, \widehat{D}_u \leq 2 \E{\widehat{D}_u} \right.} \\
&\leq \text{exp}\lr{- \frac{(\epsilon(n) \mathbb{E}_*d'_v +(1+\epsilon(n))o_n(1))^2}{2\lr{\mathbb{E}_*d'_v +1/3 \lr{\epsilon(n) \mathbb{E}_*d'_v + (1+\epsilon(n))o_n(1)}}}} \\
&\leq \text{exp} \lr{-\frac{(\epsilon(n) \mathbb{E}_*d'_v)^2 \lr{1 + \frac{o_n(1)}{\epsilon(n) \mathbb{E}_*d'_v}} }{ 4 \mathbb{E}_*d'_v}} \\
&\leq  \text{exp} \lr{- C \epsilon^2(n)  \text{log}(n) },
 \ea \]
 where we \emph{redefined} $C = \frac{1}{8}$.
Similarly,
 \[ \P{ d'_v \leq (1 - \epsilon(n)) \mathbb{E}\widehat{D}_v \left| \mathcal{N}(u), \mathcal{E}_2, \widehat{D}_u \leq 2 \E{\widehat{D}_u} \right.} \leq  \text{exp} \lr{-C \epsilon^2(n) \text{log}(n) }. \]
Hence each vertex $v \in \mathcal{N}(u)$ that has no neighbour with $u$ in common is thus a good vertex with probability $2\text{exp} \lr{-C \epsilon^2(n) \text{log}(n) }$. Consequently, conditional on $\mathcal{N}(u), \mathcal{E}_2, \widehat{D}_u \leq 2 \E{\widehat{D}_u}$,
\[  \sum_{v \in \mathcal{N}(u): \mathcal{N}(v) \cap \mathcal{N}(u) = \emptyset } Z_v  \leq \text{Bin} \lr{2 \mathbb{E}\widehat{D}_u , 2 \text{exp} \lr{- C \epsilon^2(n) \text{log}n } }. \]
We have,
\[ \ba &\P{\text{Bin} \lr{2 \mathbb{E}\widehat{D}_u , 2 \text{exp} \lr{- C\epsilon^2(n) \text{log}n } } \geq  \frac{1}{2} \epsilon(n) \mathbb{E}\widehat{D}_u} \\
 &\leq { 2 \mathbb{E}\widehat{D}_u \choose  \frac{1}{2} \epsilon(n) \mathbb{E}\widehat{D}_u} \lr{ 2 \text{exp} \lr{- C \epsilon^2(n) \text{log}n }}^{ \frac{1}{2} \epsilon(n) \mathbb{E}\widehat{D}_u} \\
&\leq \lr{ \frac{4e}{\epsilon(n)}}^{  \frac{1}{2} \epsilon(n) \mathbb{E}\widehat{D}_u} \lr{ 2 \text{exp} \lr{- C \epsilon^2(n) \text{log}n }}^{ \frac{1}{2} \epsilon(n) \mathbb{E}\widehat{D}_u}  \\
&= \text{exp} \lr{ \frac{1}{2} \epsilon(n) \mathbb{E}\widehat{D}_u \lr{\text{log} \frac{8e}{\epsilon(n)} - C\epsilon^2(n) \text{log} n} } \\
&= o(1/n),
 \ea \]
since $\epsilon(n) = 1 / \text{log}^{1/3}(n).$ Hence,
\[ \P{\left. X_u >  \frac{1}{2} \epsilon(n) \lr{\omega(n) +  \mathbb{E}\widehat{D}_u} \right|  \mathcal{E}_1, \mathcal{E}_2 } = o(1/n). \] 

The last step (\eqref{eq::E3}) is completed by noting that $\omega(n) = \bigO \lr{ \mathbb{E}\widehat{D}_u}$.
\end{proof}

\begin{proof}[Proof of Lemma \ref{lm::eig_WideH_to_PBar}]
All matrices in 
\[ W =  (\widehat{H} - H) + (H - \E{H}) + (\E{H} - P), \]
are real and symmetric, hence, combining Lemmas \ref{co::GapP} - \ref{thm::log},
\[ \ba \rho(W) &\leq \rho(\widehat{H} - H) + \rho(H - \E{H}) + \rho(\E{H} - P) \\
&= o_n(1) \frac{1}{ \overline{D}(n)}. \ea \]
Employing Lemma \ref{lm::alignment_eigenvectors} gives that to each eigenvector $\widehat{\textbf{x}}$ of $\widehat{H} = P+W$ corresponds an eigenvector $\textbf{x}$ of $P$ such that
\[ \ba \widehat{\textbf{x}} \cdot \textbf{x} &\geq \sqrt{1 - \lr{\frac{\rho(W)}{\Delta(P)}}^2} \\
&= 1 - \bigO \lr{ \lr{\frac{\rho(W)}{\Delta(P)}}^2} \\
&= 1 - o_n(1),
\ea \]
since $\Delta(P) = \Omega \lr{ 1 / \overline{D}(n) }$.
\end{proof}

\begin{proof}[Proof of Lemma \ref{thm::ev_vanishing_dist}]
Invoking Lemma \ref{lm::eig_WideH_to_PBar}, to each $\widehat{\textbf{x}}_i$ (with eigenvalue $\widehat{\lambda}_i$) there exists a normed eigenvector $\textbf{x}_i$ (with eigenvalue $\lambda_i$) of $P$ such that
\be \widehat{\textbf{x}}_i \cdot \textbf{x}_i = 1 - f_i(n), \label{eq::ev_aligned} \ee
with $f_i(n) = o_n(1)$.
We claim that all $\lambda_i$ are larger than zero (note that we refer here to a set of $\widehat{L}$ eigenvalues).  This can be seen as follows: From Lemma \ref{co::GapP} we know that the first $L$ eigenvalues of $P$ are of order $1/\overline{D}$ and all other eigenvalues are zero. By Lemma \ref{lm::alignment_eigenvectors}, $|\lambda_i - \widehat{\lambda}_i| \leq \rho(W) \ll 1/\overline{D}$, hence the first $L$ eigenvalues of $\widehat{H}$ are also of order $\Omega\lr{1/\overline{D}} - \bigO(\rho(W)) = \Omega\lr{1/\overline{D}}$, and the other $n-L$ are of order $\bigO(\rho(W))$. Now, the $\widehat{L}$ eigenvalues of $\widehat{H}$ that are picked in Step 1 of Algorithm 1 are precisely those whose absolute eigenvalue exceeds $f(n) / \widehat{D}_{\text{average}} = \Omega\lr{f(n)/\overline{D}} \gg \rho(W),$ by construction of $f$ in Section 3. Hence those eigenvalues must necessarily be of order $\Omega\lr{1/\overline{D}}$ (i.e., they are indeed non-zero) and $L = \widehat{L}$ with high probability.  

Since $\textbf{x}_i$ corresponds to a non-zero eigenvalue, it follows from the proof of Lemma \ref{co::GapP} that $\textbf{x}_i$ is constant on each block, i.e., $\textbf{x}_i(u) = \textbf{x}_i(v)$ if $\sigma_u = \sigma_v$. Let $x_i^{(k)}$ be the value of $\textbf{x}_i$ on block $k \in S$. Put 
\be t_k = \sqrt{n} (x_1^{(k)} , \ldots , x_L^{(k)}). \ee
Then,
\[ \ba 
1/n \left| \{ u \in V : || \sqrt{n}\widehat{z}_u - t_{\sigma_u}  ||^2 \geq T^2 \} \right| &\leq \frac{1}{nT^2} \s{m=1}{n} || \sqrt{n}\widehat{z}_u - t_{\sigma_u}  ||^2 \\
&= 1/T^2 \s{u=1}{n} || (\widehat{x}_1^{(u)} , \ldots , \widehat{x}_L^{(u)}) - (x_1^{(\sigma_u)} , \ldots , x_L^{(\sigma_u)}) ||^2 \\
&= 1/T^2 \s{k=1}{L} ||\widehat{x}_k - x_k ||^2 \\
&= 1/T^2 \s{k=1}{L} f_k(n),
\ea \] 
to finish the proof, let $T =\lr{ \s{k=1}{L} f_k(n)}^{1/3} = \bigO \lr{ \lr{\frac{\rho(W)}{\Delta(P)}}^{2/3}} =  o_n(1)$.
\end{proof}

\begin{proof}[Proof of Lemma \ref{thm::unique_Representatives}]
Below we shall make a spectral decomposition in terms of $L$ orthonormal eigenvectors of $Z$ that span the union of all eigenspaces corresponding to non-zero eigenvalues. Recall from the proof of Lemma \ref{co::GapP} how we can obtain the eigenvectors of $Z$ from the eigenvectors of $P$.

Recall that by construction $\{ \widehat{\textbf{x}}_i \}_{i=1}^L$ are orthonormal eigenvectors of $\widehat{H}$ corresponding to non-zero eigenvalues spanning an $L$ dimensional space.  Recall further from the proof of Lemma \ref{thm::ev_vanishing_dist} that the corresponding eigenvectors $\{\textbf{x}_i \}_{i=1}^L$ of $P$ are associated with non-zero eigenvalues. Lemma \ref{lm::alignment_eigenvectors} $(ii)$ entails that the space spanned by those $\{\textbf{x}_i \}_{i=1}^L$ has also dimension $L$. And Lemma \ref{lm::eig_WideH_to_PBar} implies that $\{\textbf{x}_i \}_{i=1}^L$ become an orthonormal set for $n$ tending to infinity (because they become more and more aligned with the orthonormal set $\{ \widehat{\textbf{x}}_i \}_{i=1}^L$).

Let, as in the proof of Lemma \ref{thm::ev_vanishing_dist}, $x_i^{(k)}$ be the value of $\textbf{x}_i$ on block $k \in S$. 
Note that $\sum_k n \alpha_k (x_i^{(k)})^2 = 1$ for $i \in \{1,\ldots, L\}$. Putting $y_i = \sqrt{n} (x_i^{(1)}, \ldots, x_i^{(K)})^T$, we see that each $y_i$ is a \emph{normalized} eigenvector of $Z$ in the sense that  $\sum_k  \alpha_k (y_i(k))^2 = 1$.
\\ 
Now, assume for a contradiction that $|t_k - t_l| \to 0$ as $n \to \infty$: 
\be \s{i=1}{L} |\sqrt{n} x_i^{(k)} - \sqrt{n} x_i^{(l)}|^2 = \s{i=1}{L} |y_i(l) - y_i(k)|^2 \to 0. \label{eq::ev_equal_lines} \ee
We conclude that there exist orthonormal eigenvectors of $Z$, $\{ \overline{y}_1,\ldots, \overline{y}_L  \}$ (with eigenvalues $\{\lambda_i\}_{i=1}^L$ after a possible relabelling of indices), that span the range of $Z$, such that
\[ \overline{y}_u(k) = \overline{y}_u(l)  \]
for all $u$. The other $K-L$ eigenvectors have zero as an eigenvalue. 

To proceed, consider matrix 
\[ N = \lr{\sqrt{\alpha_u}  \frac{B_{uv}}{\overline{M}_u \overline{M}_v} \sqrt{\alpha_v}}_{u,v}. \]

If $(x(1),\ldots,(x(K))^T$ is an eigenvector of $Z$ then $(\sqrt{\alpha_1} x(1),\ldots,\sqrt{\alpha_K}x(K))^T$ is an eigenvector of $N$, as is easily verified. Hence $N$ has $\{ ( \sqrt{\alpha_1} \overline{y}_i(1), \ldots, \sqrt{\alpha_K} \overline{y}_i(K))^T  \}_{i=1}^{L}$ as eigenvectors corresponding to non-zero eigenvalues and $K-L$ eigenvectors with $0$ as eigenvalue (which do not contribute to the spectral decomposition of $N$). Hence
\[ N = \lr{ \s{l=1}{L}  \sqrt{\alpha_u}  \overline{y}_l(u) \lambda_l  \sqrt{\alpha_v} \overline{y}_l(v) }_{u,v}. \]
Thus, for all $u$,
\[ \ba
 \frac{B_{ku}}{\overline{M}_k \overline{M}_u} &= \sum_m \overline{y}_m(k) \lambda_m \overline{y}_m(u) \\
&= \sum_m \overline{y}_m(l) \lambda_m \overline{y}_m(u) \\
&=  \frac{B_{lu}}{\overline{M}_l \overline{M}_u}, 
 \ea \]
violating assumption \ref{eq::ass_dis}. 
\end{proof}

\subsection{Comparison to spectral analysis on the adjacency matrix}

\begin{proof}[Proof of Theorem \ref{thm::power_law}]
This proof leans strongly on ideas borrowed from \cite{MiPa02}, where graphs \emph{without} a community-structure are considered. Parts of their proof carry through for the DC-SBM considered here.
Note that $\lim_{n \to \infty} g(n)/n = 1$. 

By definition, we require without lose of generality $D_1 \leq D_2 \leq \cdots \leq D_n$. However, we  obtain the same graph (with now a decreasing degree-sequence) by a rearrangement of indices, if we put
\be \phi_u = \left\{ 
  \begin{array}{l l}
    \frac{\phi_1}{u} & \quad \text{ if } u \leq 1 \leq k= n^{\beta}\\
    1 & \quad  \text{ if } u > n^{\beta}, \\
  \end{array} \right. \ee
  where $\phi_1 = n^{\gamma + \beta}$, and $D_u = \phi_u \omega(n)$ (with $\omega$ as in \eqref{def::omega}).
  \be \sigma_u = \left\{ 
  \begin{array}{l l}
    1 & \quad \text{ if } u \leq \frac{n}{2}\\
    2 & \quad  \text{ if } u > \frac{n}{2}. \\
  \end{array} \right. \ee
Denote a sample of the random graph by $G$. We decompose $G$ into the following graphs (exactly as in \cite{MiPa02}) :
\begin{itemize}
\item $G_1$, which is a union of vertex disjoint stars $S_1, \ldots, S_k$. Star $S_u$ has as its center node $u$  and as leaves those vertices from among $\{k+1, \ldots, n\}$ adjacent to $u$, but not adjacent to $\{1, \ldots, u-1\}$;
\item $G_1'$ is the graph consisting of all edges of $G$ with one endpoint in $\{1, \ldots, k\}$ and the other   endpoint in $\{k+1, \ldots, n\}$, except for those edges in $G_1$;
\item $G_2$ is the subgraph of $G$, which is induced by $\{1, \ldots, k\}$;
\item $G_3$ is the subgraph of $G$, which is induced by $\{k+1, \ldots, n \}$.
\end{itemize}
Further, let $F_u$ be the subset of vertices in $\{k+1, \ldots, n \}$ that are adjacent to $\{1, \ldots, u-1\}$  and let $C$ be a  constant, independent of $n$, whose value might change along the course of the proof.

 We claim that $\widehat{d}_u$, the degree of vertex $u$ in $G_1$, concentrates around its mean. Indeed, consider
\[ \widehat{d}_u = \s{l=k+1}{n} \text{Ber} \lr{  \frac{D_u D_l}{g(n) \omega(n)} B_{\sigma_u \sigma_l} } - \s{l \in F_u}{} \text{Ber} \lr{  \frac{D_u D_l}{g(n) \omega(n)} B_{\sigma_u \sigma_l} }, \] 
where $g$ is defined in \eqref{def::g}.
Then, 
\[ d_u = \E{\widehat{d}_u} \geq \frac{\omega(n) \phi_u}{g(n)} \lr{\s{l=k+1}{n} B_{\sigma_u \sigma_l} - C \E{|F_u|}},  \]
which we bound from below by estimating $\E{|F_u|}$, for $u \leq k = n^{\beta}$: For large enough $n$,
\[ \ba \E{|F_u|} &= \s{l=k+1}{n} \s{v=1}{u-1} \frac{D_l D_v}{\omega(n)g(n)} B_{\sigma_u \sigma_l} \\
&\leq C \frac{\omega(n) \phi_1}{g(n)} \s{l=k+1}{n} \s{v=1}{u-1} \frac{1}{v} \\
&\leq C \omega(n) \phi_1 \frac{n - n^{\beta}}{g(n)} n^{\beta} \\
&\leq C \omega(n) n^{\gamma + 2 \beta}, \ea \]
after recalling the special choice for the degree sequence. 
 
Consequently, we have
\[ \frac{n}{g(n)} \frac{B_{11} + B_{12}}{2} D_u \geq d_u \geq \frac{B_{11} + B_{12}}{2} D_u \lr{\frac{n}{g(n)} - C \frac{\omega(n) n^{\gamma + 2 \beta}}{g(n)} }.  \]
Invoking large deviation theory on $\widehat{d}_u$ (which is a sum of Bernoulli random variables), we deduce that
\be \P{|\widehat{d}_u - d_u| > \sqrt{c' d_u \text{log} n}} \leq 2 / n^{c'/4},  \label{eq::concentration_stars} \ee
For $c'>0$ a constant.
We take $c' = 8$ to establish \eqref{eq::concentration_stars} uniformly over all vertices. We next investigate $\Delta(G_1)$, the smallest gap between different eigenvalues of $G_1$. This graph is the union of vertex disjoint stars with degree $\widehat{d}_u$ so that its spectrum is given by
\[ \{ \pm \sqrt{\widehat{d}_1 - 1} , \ldots, \pm \sqrt{\widehat{d}_k - 1} \}. \]
We claim that 
\be \Delta(G_1) \geq C \sqrt{\omega(n)} n^{\frac{\gamma - 3 \beta}{2}} \to \infty \label{eq::gap_G1} \ee
with high probability. 
Indeed, define
\[ x_u^{\pm} = d_u \pm \sqrt{c' d_u \text{log} n}, \]
and note that with high probability  $\widehat{d}_u \geq x_u^{-}$ and $\widehat{d}_{u+1} \leq x_{u+1}^{+}$. To investigate the difference $x_u^{-} - x_{u+1}^{+}$, we first bound $d_u - d_{u+1}$ from below:
\[ \ba
d_u - d_{u+1} &\geq \frac{B_{11} + B_{12}}{2} \omega(n) \phi_1 \lr{ \frac{n/g(n)}{u(u+1)} - C \frac{\omega(n) n^{\gamma + 2 \beta}}{n} \frac{1}{u}} \\
&= \frac{B_{11} + B_{12}}{2} \omega(n) \phi_1 \frac{1}{u} \lr{\frac{n/g(n)}{u+1} - C \frac{\omega(n) n^{\gamma + 2 \beta}}{n} } \\
&\geq \frac{B_{11} + B_{12}}{2} \omega(n) \phi_1 \frac{1}{u} \lr{\frac{n/g(n)}{n^{\beta}+1} - C \frac{\omega(n) n^{\gamma + 2 \beta}}{n} } \\
&\geq \frac{B_{11} + B_{12}}{4} \omega(n) \phi_1 \frac{1}{n^{\beta}} \frac{1}{n^{\beta}} \\
&= \frac{B_{11} + B_{12}}{4}  \frac{\omega(n) n^{\gamma + \beta}}{n^{2 \beta}} \\
&= \frac{B_{11} + B_{12}}{4} \omega(n) n^{\gamma - \beta}.
 \ea \]
Next we show that the $\sqrt{d_u \text{log} n}$ terms are negligible:
\[ \ba 
\sqrt{d_u \text{log} n} &\leq \sqrt{\frac{B_{11} + B_{12}}{2} n/g(n) D_u \text{log} n } \\
&\leq C \sqrt{\omega(n)\text{log}(n) n^{\gamma + \beta}} \\
&\leq C \omega(n) n^{\frac{\gamma + \beta}{2}} \\
&\ll \omega(n) n^{\gamma - \beta},
\ea \]
due to \eqref{eq::C_beta_gamma_2}. Hence,
\[ x_u^{-} - x_{u+1}^{+} \geq C \omega(n) n^{\gamma - \beta}. \] 
As a consequence, 
\[ \ba 
\Delta(G_1) &\geq \min_{u \in \{1, \ldots, k\}} \lr{ \sqrt{\widehat{d}_u - 1} - \sqrt{\widehat{d}_{u+1} - 1} } \\ 
&\geq \min_{u \in \{1, \ldots, k\}} \lr{ \sqrt{x_{u}^{-} - 1} - \sqrt{x_{u+1}^{+} - 1} }  \\
&= \min_{u \in \{1, \ldots, k\}} \lr{ \frac{x_{u}^{-} - x_{u+1}^{+}}{\sqrt{x_{u}^{-} - 1} + \sqrt{x_{u+1}^{+} - 1}}  } \\
&\geq  C \frac{\omega(n) n^{\gamma - \beta}}{\sqrt{\omega(n)} n^{\frac{\gamma + \beta}{2}}} \\
&= C \sqrt{\omega(n)} n^{\frac{\gamma - 3 \beta}{2}},
\ea \]
that is \eqref{eq::gap_G1}.

We continue with an inspection of $G_1'$, that is, we focus on $\widehat{m}_u = \widehat{D}_u | G_1'$, the degree of vertex $u$ in $G_1'$, and show that 
\be \widehat{m}_u \leq  2 c' \text{log} n \label{eq::degree_bound_G_1'}, \ee
with high probability (here, $m_u$ is the expectation of $\widehat{m}_u$). We shall use this in combination with the fact that the spectral radius of a graph is bounded by its largest degree. \\ Write
\[ \widehat{m}_u = \s{l \in F_u}{} \text{Ber} \lr{  \frac{\phi_u \omega(n)}{g(n) } B_{1 \sigma_l} }. \]
This expression allows us to deduce an upperbound for $m_u$,
\[ \ba 
m_u &= \E{\widehat{m}_u} \\
&\leq  C \E{F_u} \frac{\phi_u \omega(n)}{g(n) } \\
&\leq C \omega(n) n^{\gamma + 2 \beta} n^{\gamma + \beta} \frac{1}{u} \frac{\omega(n)}{g(n)}  \\
&\leq C \omega^2(n) \frac{n^{2\gamma + 3\beta}}{n},
\ea \]
which tends to zero due to \eqref{eq::C_beta_gamma_1}. Standard bounds for Bernoulli random variables give
\[ \ba
\P{ |m_u - \widehat{m}_u| \leq c' \text{log} n} &\leq 2 \ \text{exp} \lr{- \frac{(c' \text{log} n)^2}{2(m_u + c' \text{log}(n)/3)}} \\
&\leq 2 \ \text{exp} \lr{ - \frac{1}{4} c' \text{log} n} \\
&= \frac{2}{n^{c'/4}}.
 \ea \]
We conclude that, with probability at least $1 - \frac{2}{n^2}$,
\[ \widehat{m}_u \leq m_u + c' \text{log} n \leq 2 c' \text{log} n, \] i.e.,
\eqref{eq::degree_bound_G_1'} holds.
An identical estimate holds when $u > k$.

We next bound the number of edges in $G_2$, denoted by $E(G_2)$. The square root of $E(G_2)$ is an upper bound for the spectral radius of $G_2$. 
\[ \ba \E{|E(G_2)|} &= C \s{u=1}{k} \s{v=1}{k} \frac{\phi_u \phi_v \omega(n)}{g(n)} \\
&\leq C \frac{n^{\gamma + \beta} n^{\gamma + \beta} \omega(n)  n^{\beta} n^{\beta}}{g(n)} \\
&\leq C \frac{n^{2 \gamma + 4 \beta}}{n},
\ea \]
vanishing for large $n$. Again, upon invoking standard large deviation theory, we have, with probability at least $1 - \frac{2}{n^2}$,
\be \E{|E(G_2)|} \leq 2 c' \text{log} n. \label{eq::edge_bound_G_2} \ee

Consider the degree of a vertex $u > k$ in $G_3$,
\[ \ba \E{ \widehat{D}_u |_{G_3}} &= \s{v=k}{n} \frac{\phi_u \phi_v \omega(n)}{g(n)} B_{\sigma_v \sigma_u} \\ 
&\leq C \frac{\omega(n)}{g(n)} n \\
&\leq C \omega(n).
\ea \]
Hence,
\be \P{\widehat{D}_u |_{G_3} > C \omega(n) + \sqrt{c' \text{log}(n)  C \omega(n) } } \leq \frac{2}{n^{c'/ 4}}. \label{eq::degree_bound_G_3} \ee

Combining these observations leads to our assertion that the first $k$ eigenvectors of $A$  become undistinguishable of those of the $k$ stars, when $n$ tends to infinity. Indeed, split $A$ according to the described  graph-composition:
\[ A = A|_{G_1} +  A|_{G'_1} +  A|_{G_2} + A|_{G_3}, \]
and note that the spectral radii of $A|_{G'_1}$, $A|_{G_2}$ and $A|_{G_3}$ vanish in the presence of $\Delta(G_1)$. This follows because (as mentioned above) for any graph its spectral radius is bounded by the minimum of its largest degree and the square root of its number of edges. Hence, due to \eqref{eq::degree_bound_G_1'} - \eqref{eq::degree_bound_G_3}, 
\[ \rho(A|_{G'_1}) \leq 2 c' \text{log} n,  \] 
\[ \rho(A|_{G_2}) \leq \sqrt{2 c' \text{log} n},  \] 
and
\[ \rho(A|_{G_3}) \leq C \omega(n) + \sqrt{ c' \text{log}(n) C \omega(n)},  \]
with high probability.  
All those three bounds vanish indeed upon division by $\Delta(G_1) \geq C \omega(n) n^{\frac{\gamma - 3 \beta}{2}}$.
 Lemma \ref{lm::alignment_eigenvectors} finishes the proof.
\end{proof}

\subsection{Interpretation of the conditions}

\begin{proof}[Proof of Remark \ref{rm::prop}]
Assume, 
\[  \frac{B_{ij}}{M_i} = \frac{B_{lj}}{M_l} \]
then,
\be  \frac{B_{ij}}{M_i M_j} = \frac{B_{lj}}{M_l M_j}. \label{eq::CondIdentifiabilityEq} \ee
Now, put $\overline{\phi}_i = \frac{1}{\alpha_i n} \s{\sigma_u = i}{}\phi_u$, then
\[ \frac{B_{ij}}{M_i M_j} = \frac{\alpha_i \overline{\phi}_i  B_{ij} \alpha_j \overline{\phi}_j}{\alpha_i \overline{\phi}_i M_i M_j \alpha_j \overline{\phi}_j}. \]
We  give a probabilistic interpretation to the terms appearing in the denominator:
\be \ba
n \alpha_i \overline{\phi}_i M_i &= n \alpha_i \overline{\phi}_i \s{k=1}{K}\sum_{u: \sigma_u = k} \phi_u B_{i \sigma_u}  \\
&= n \alpha_i \overline{\phi}_i \s{k=1}{K} \  n \alpha_k \overline{\phi}_k B_{i k} \\
&= \s{k=1}{K} \ (n \alpha_i) (n \alpha_k) \overline{\phi}_i B_{i k} \overline{\phi}_k \\
&= \s{k=1}{K} \sum_{u: \sigma_u = i} \phi_u \sum_{v: \sigma_v = k} \phi_v B_{ik} \\ 
&= \frac{n}{\omega(n)} \s{k=1}{K} \sum_{u: \sigma_u = i}  \sum_{v: \sigma_v = k}  \P{l \leftrightarrow m}  \\ 
&= \frac{n}{\omega(n)}  \sum_{u: \sigma_u = i}  \s{m=1}{n}  \P{l \leftrightarrow m}  \\ 
&= \frac{n}{\omega(n)} \{ \mbox{expected total degree of vertices in community } i \}.
\ea \ee
An inspection of the numerator reveals
\be \ba
n \alpha_i \overline{\phi}_i B_{ij} \overline{\phi}_j \alpha_j n   &= \sum_{u: \sigma_u = i} \phi_u \sum_{v: \sigma_v = j} \phi_v B_{ij} \\
&= \frac{n}{\omega(n)} \sum_{u: \sigma_u = i}  \sum_{v: \sigma_v = j}  \P{u \leftrightarrow v} \\
&= \frac{n}{\omega(n)}  \{\mbox{expected } \# \mbox{edges between community }i \mbox{ and } j \}
\ea \ee
\end{proof}

\begin{proof}[Proof of Lemma \ref{lm::best_cond}]
Assume first that for some $i$ and $l$ we have for all $j$ 
\[ \widehat{B}_{ij} = \widehat{B}_{lj} \] and, for all $u$, $v$, 
\[ \frac{\phi_u B_{\sigma_u \sigma_v} \phi_v}{g(n)} = \frac{\widehat{\phi}_u \widehat{B}_{\sigma_u \sigma_v} \widehat{\phi}_v}{\widehat{g}(n)}, \]
with $\phi_u$ defined in \eqref{def::phi_u} and $g$ in \eqref{def::g} ($\widehat{\phi}_u$ and $\widehat{g}$ are defined analogously).
Fix $j$. Let $\alpha,\beta$ and $\gamma$ be \emph{any}	indices such that $\sigma_{\alpha} =i, \sigma_{\beta} =j$ and $\sigma_{\gamma} =l$. Then,
\[ \frac{\widehat{\phi}_{\alpha} \widehat{B}_{i j} \widehat{\phi}_{\beta}}{\widehat{g}(n)} = \frac{\phi_{\alpha} B_{i j} \phi_{\beta}}{g(n)} \Rightarrow B_{ij} = \frac{\widehat{\phi}_{\alpha}}{\phi_{\alpha}} \frac{\widehat{\phi}_{\beta}}{\phi_{\beta}} \frac{g(n)}{\widehat{g}(n)} \widehat{B}_{i j}  \]
and
\[ \frac{\widehat{\phi}_{\gamma} \widehat{B}_{l j} \widehat{\phi}_{\beta}}{\widehat{g}(n)} = \frac{\phi_{\gamma} B_{l j} \phi_{\beta}}{g(n)} \Rightarrow B_{lj} = \frac{\widehat{\phi}_{\gamma}}{\phi_{\gamma}} \frac{\widehat{\phi}_{\beta}}{\phi_{\beta}} \frac{g(n)}{\widehat{g}(n)} \widehat{B}_{l j}, \]
implying that (since $\widehat{B}_{ij} = \widehat{B}_{lj}$) 
\[ B_{ij} = \frac{\widehat{\phi}_{\alpha}}{\phi_{\alpha}} \frac{\widehat{\phi}_{\beta}}{\phi_{\beta}} \frac{g(n)}{\widehat{g}(n)}  \widehat{B}_{l j} = \frac{\widehat{\phi}_{\alpha}}{\phi_{\alpha}} \frac{\phi_{\gamma}}{\widehat{\phi}_{\gamma}} B_{l j}. \]
Since $j$ was arbitrary, there exist $c$ such that for all $j$
\[ B_{ij} = c B_{lj}, \]
hereby violating the identifiability condition, as pointed out in Remark \ref{rm::violating_ident}, i.e.,
\[\frac{B_{ij}}{M_i} = \frac{B_{lj}}{M_l},\]
for all $j$. 

 Now assume that $(a)$ holds, that is
\[\frac{B_{ij}}{M_i} = \frac{B_{lj}}{M_l},\]
for all $j$. Define for $k,l \in S$ and $u \in V$
\[ \widehat{B}_{kl} = \frac{1}{M_k} \frac{1}{M_l} B_{kl} \]
and 
\[ \widehat{\phi}_u = f(n) \phi_u M_{\sigma_u}, \]
where
\[ f(n) = \frac{ \sum_v \phi_v M_{\sigma_v}}{\sum_w \phi_w}. \]
Then, 
\[ \widehat{B}_{ij} = \frac{1}{M_i} \frac{1}{M_j} B_{ij}  = \frac{1}{M_i} \frac{1}{M_j} \frac{M_i}{M_l} B_{lj} = \frac{1}{M_j} \frac{1}{M_l} B_{lj} = \widehat{B}_{lj}, \]
and (as above, we define $\widehat{g}$ analogously to $g$),
\[ \ba \frac{\widehat{\phi}_u \widehat{B}_{\sigma_u \sigma_v} \widehat{\phi}_v}{\widehat{g}(n)} &= \frac{1}{\sum_w f(n) \phi_w M_{\sigma_w}} \phi_u M_{\sigma_u} f(n) \frac{B_{\sigma_u \sigma_v}}{M_{\sigma_u} M_{\sigma_v}} f(n) M_{\sigma_v} \phi_v \\
&= \frac{\phi_u B_{\sigma_u \sigma_v} \phi_v}{\sum_w \phi_w} \\
&= \frac{\phi_u B_{\sigma_u \sigma_v} \phi_v}{g(n)}. \ea \]  
\end{proof}

\section{Future research}
\subsection{Exact recovery}
\textcolor{black}{The obtained clustering here is almost-exact: only a vanishing fraction of nodes is miss-classified. It is plausible that an exact clustering could be obtained from this clustering, by using it as input to the "clean-up" algorithm presented in Section $7.2$ of \cite{AbBaHa14} or alternatively, Algorithm $2$ in \cite{MoNeSlSTOC15}.}

\subsection{Non-constant $B$}
\textcolor{black}{In the underlying paper we assumed $B$ to be a constant matrix. The current analysis could be extended to a setting where $B$ is allowed to change with $n$. We need however the existence of a constant $\delta > 0$ such that for all $n$,  $\rho(Z) \geq \delta $ for $\widehat{H}$ to concentrate. For identifiability we need the existence of some $\epsilon > 0$ such that for all $i,j$ and $n$,  $\max_{i'} \left| \frac{B_{i i'}}{\overline{M}_i \overline{M}_{i'}} \neq \frac{B_{j i'}}{\overline{M}_j \overline{M}_{i'}} \right| \geq \epsilon$.}

\subsection{Sparser graphs}
The main issue with both the normalized adjacency matrix and the Laplacian is proving when those matrices concentrate around a deterministic matrix. For the Laplacian, if the degrees are of order $\Omega(\text{log}(n))$, matrices concentrate according to \cite{ChRa11}. But, if the minimum degree is of order $o(\text{log}(n))$, the graph is seen to have some isolated vertices. Those contribute to multiple zeros in the spectrum: hence the matrix does not concentrate. There are multiple ways to overcome this issue, for instance removing the low-degree vertices or raising all the degrees. The latter strategy is proposed in \cite{LeVe15} for the inhomogeneous \ErdosRenyi \  random graph (where edges are independently present with probabilities $(p_{uv})_{u,v=1}^n$) and also in \cite{QiRo13,ChChTs12} (see Section \ref{ss::regularized}) for the the DC-SBM.  According to  \cite{LeVe15}, for $\tau \sim d$, with $d = n \max_{uv} p_{uv}$, with high probability, 
\[ \rho \lr{L_{\tau}  - \lr{  \E{D_{\tau}}^{-1/2} \E{A} \E{D_{\tau}}^{-1/2}  } } = \bigO \lr{\frac{1}{\sqrt{d}}}, \]
where $L_{\tau}$ is defined in \eqref{eq::reg_laplacian}.

Based on these observations, it might be fruitful to use $\widehat{H}$ on a graph where the degrees have been artificially inflated.  

\newpage

\bibliographystyle{plain}
\bibliography{literature}

\end{document}